\documentclass[12pt, a4paper]{article}
\usepackage{latexsym} 
\usepackage{amsmath,amssymb}
\usepackage{mathtools}
\usepackage{bm} 
\usepackage[dvipdfmx]{graphicx}
\usepackage{float}
\usepackage[T1]{fontenc}
\usepackage{textcomp}
\usepackage{type1cm}
\usepackage{bbm} 
\usepackage{pifont} 
\usepackage{multicol} 
\usepackage{amsthm} 
\usepackage{ascmac} 
\usepackage{enumerate}
\usepackage{tikz}
\usepackage{tikz-cd}
\usetikzlibrary{arrows} 
\usepackage{mathrsfs} 
\usepackage{url}
\usepackage{alltt} 
\usepackage{braket} 
\usepackage{cases} 
\usepackage[]{multicol} 
\usepackage{amssymb}
\usepackage{color, soul}
\usepackage{empheq}

\theoremstyle{definition}

\newtheorem{thm}[subsection]{Theorem}
\newtheorem{cor}[subsection]{Corollary}
\newtheorem{conj}[subsection]{Conjecture}

\newtheorem{prop}[subsection]{Proposition}

\newtheorem{lem}[subsection]{Lemma}
\newtheorem*{thm*}{Theorem}

\newcommand{\Rarrow}{\Longrightarrow}

\newcommand{\LRarrow}{\Longleftrightarrow}

\DeclareMathOperator{\Z}{\mathbb{Z}}
\DeclareMathOperator{\Q}{\mathbb{Q}}

\usepackage{indentfirst}
\usepackage{setspace}

\begin{document}
\title{Some congruences of calibers of real quadratic fields}
\author{Naoto Fujisawa\thanks{Graduate School for Mathematics, Kyushu University, Motooka 744, Nisiku, Fukuoka, 819-0395, Japan\\
Email address: fujisawa.naoto.306@gmail.com}}
\date{}
\maketitle

\begin{abstract}
In this paper, the congruence equations for caliber and m-caliber in various discriminants are proven. Additionally, We also obtained the lengths of the periods of several continued fractions as corollaries.
\end{abstract}
{\bf Keywords:} caliber, m-caliber, quadratic field, continued fraction, class number


\setlength{\baselineskip}{18pt}
\parskip=10pt plus 5pt

\section*{ \S 1. Introduction}
\setcounter{section}{1} 

A real quadratic number $w$ is $reduced$ if it satisfies $w>1$ and $-1<w'<0$,  where $w'$ is the algebraic conjugate of $w$ over the rational number field $\Q$. A quadratic number $w$ is reduced if and only if its usual continued fraction expansion is purely periodic. Let $\mathcal{Q}(D)$ be the set of all reduced quadratic numbers of a given discriminant $D$:
\[
\mathcal{Q}(D):=\{w \mid \mathrm{disc}(w)=D,\ w:\mathrm{reduced}\}. 
\]
Here, the discriminant $D$ of a real quadratic number $w$, denoted $D=$disc($w$), is the quantity $D=b^2-4ac$, if the quadratic equation of $w$ is
\[
aw^2+bw+c=0 \ (a,b,c \in \Z, \ a>0, \ \mathrm{GCD}(a,b,c)=1).
\]
The set $\mathcal{Q}(D)$ is finite and its cardinality is denoted by $\kappa(D)$. When $D$ is a fundamental discriminant, the number $\kappa(D)$ is called the $caliber$ of $\Q(\sqrt{D})$.\par

We write $w_1 \sim w_2$ if the two numbers $w_1$ and $w_2$ are $\mathrm{GL}_2(\Z)$-equivalent under the linear fractional transformation. It is known that $w_1 \sim w_2$ if
and only if their periods of continued fraction expansions are cyclically equivalent. Let $\mathcal{R}(D)$ be the set of $\mathrm{GL}_2(\Z)$-equivalence classes of $\mathcal{Q}(D)$ and $h(D)$ be its cardinality:
\[
\mathcal{R}(D)=\mathcal{Q}(D)/\sim, \ h(D)=\sharp\mathcal{R}(D).
\]
The number $h(D)$ is the wide class number of discriminant $D$.\par

A real quadratic number $w$ is $m$-$reduced$ if $w >1$ and $0 < w' < 1$. A number is $m$-reduced if and only if its ``minus'' continued fraction expansion is purely periodic. Let $\mathcal{Q}^+(D)$ be the set of all $m$-reduced numbers of a given discriminant $D$:
\[
\mathcal{Q}^+(D):=\{w \mid \mathrm{disc}(w)=D, \ w:m\text{-reduced}\}.
\]
This is also a finite set and its cardinality will be denoted by $\kappa^+(D)$. We say two numbers $w_1$ and $w_2$ are strictly equivalent, written $w_1$$\approx$ $w_2$, if the two are related with each other by a transformation in $\mathrm{SL}_2(\Z)$. Two elements in $\mathcal{Q}^+(D)$ are strictly equivalent if and only if the periods of their minus continued fraction expansions are cyclically equivalent. Let $\mathcal{R}^+(D)$ be the set of $\mathrm{SL}_2(\Z)$-equivalence classes of $\mathcal{Q}^+(D)$ and $h^+(D)$ be its cardinality:
\[
\mathcal{R}^+(D)=\mathcal{Q}^+(D)/\approx, \ h^+(D)=\#\mathcal{R}^+(D).
\]
The number $h^+(D)$ is the narrow class number of discriminant of $D$.\par

We denote by $\varepsilon_D=\frac{t_D+u_D\sqrt{D}}{2}$ the fundamental unit of $\Z[\frac{D+\sqrt{D}}{2}]$ and $N(\varepsilon_D)$ its norm.\par

In this paper, we show the following theorems.

\begin{thm*}{\ref{8p}}
Let $p$ be a prime number such that $p \equiv 1$ (mod 4) and let $x_p$ and $y_p$ be integers satisfying $p=x_p^2+y_p^2$ and $0<x_p<y_p$. Then we have 
\[
\kappa^+(8p) \equiv 1-(-1)^{x_p}\ \mathrm{(mod\ 4)}.
\]
\end{thm*}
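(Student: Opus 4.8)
The plan is to reduce $\kappa^{+}(8p)$ to an explicit lattice‑point count and then to read off the residue modulo $4$ via two nested involutions, the second of which is the genuinely hard part.

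First I would translate the definition of $m$‑reducedness into arithmetic conditions on the associated form. If $w=\frac{-b+\sqrt{8p}}{2a}$ has $\mathrm{disc}(w)=8p$ and is $m$‑reduced, then $w>w'$ forces $a>0$, and $b$ is even; writing $b=-2m$, the conditions $0<w'$, $w'<1$, $w>1$ give successively $m>\sqrt{2p}$, $m<a+\sqrt{2p}$, $m>a-\sqrt{2p}$, while $c=\frac{m^{2}-2p}{a}$ must be a positive integer. Conversely these conditions are sufficient. Moreover, since $2p$ is squarefree and $2p\equiv 2\pmod 4$, the primitivity condition $\gcd(a,2m,c)=1$ is automatic: a common prime $q$ of $a$ and $c$ would force $q=2$ (whence $2p\equiv m^{2}\equiv 0,1\pmod 4$, impossible) or $q\mid m$ (whence $q^{2}\mid 2p$, impossible). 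Hence
\[
\kappa^{+}(8p)=\#\{(a,c,m)\in\Z_{\ge 1}^{3}\ :\ ac+2p=m^{2},\ |a-m|<\sqrt{2p}\},
\]
and one checks $|a-m|<\sqrt{2p}$ is equivalent, on this set, to $|a-c|<\sqrt{8p}$.  (If an equivalent counting lemma is established earlier in the paper I would just invoke it.)

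Next, the transposition $(a,c,m)\mapsto(c,a,m)$ preserves the set and has no fixed point, since $a=c$ would give $(m-a)(m+a)=2p$ with factors of opposite parity. So $\kappa^{+}(8p)=2N$, where $N$ counts the triples with $a<c$. Putting $g=m-a$ (which is $\ge 1$ because $a^{2}<ac=m^{2}-2p$) and $e=2m-a-c$ (which is $\ge 1$ because $|a-c|<\sqrt{8p}$ gives $a+c<2m$) yields a bijection
\[
\{(a,c,m):a<c\}\ \longleftrightarrow\ V_{1}:=\{(g,a,e)\in\Z_{\ge 1}^{3}\ :\ g^{2}+ae=2p,\ e<2g\},
\]
with inverse $m=a+g$, $c=a+2g-e$ (the identity $g^{2}+ae=m^{2}-ac=2p$ is immediate). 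On the larger set $V=\{(g,a,e)\in\Z_{\ge 1}^{3}:g^{2}+ae=2p\}$ the transposition $\tau\colon(g,a,e)\mapsto(g,e,a)$ is an involution whose fixed points are the solutions of $g^{2}+a^{2}=2p$ with $g,a\ge 1$. Since $p\equiv 1\pmod 4$, $2p$ has the unique (up to order) representation $2p=(x_{p}+y_{p})^{2}+(y_{p}-x_{p})^{2}$, both summands being positive and odd; so $\tau$ has exactly two fixed points on $V$, namely $(x_{p}+y_{p},\,y_{p}-x_{p},\,y_{p}-x_{p})$ and $(y_{p}-x_{p},\,x_{p}+y_{p},\,x_{p}+y_{p})$. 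In particular $|V|$ is even, which re‑proves that $\kappa^{+}(8p)$ is even.

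The remaining step is to determine $N\equiv|V_{1}|\pmod 2$, i.e.\ $\kappa^{+}(8p)\pmod 4$. Set $V_{2}=\tau(V_{1})=\{(g,a,e)\in V:a<2g\}$, so $|V_{1}|=|V_{2}|$, $V=V_{1}\sqcup\{e>2g\}$ (the case $e=2g$ being impossible, as $g(g+2a)=2p$ has no solution), and $|V_{1}|\equiv|V_{1}\cap V_{2}|+|V_{1}\setminus V_{2}|\pmod 2$. The term $|V_{1}\cap V_{2}|$ is governed by $\tau$ (which acts on the $\tau$‑stable set $V_{1}\cap V_{2}$): modulo $2$ it is the number of the two fixed triples above lying in $V_{1}\cap V_{2}$, which is $2$ or $1$ according as $y_{p}-x_{p}<\tfrac12(x_{p}+y_{p})$ or not. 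The delicate part is $|V_{1}\setminus V_{2}|=\#\{(g,a,e):e<2g\le a,\ g^{2}+ae=2p\}\pmod 2$; here the plan is to build a further fixed‑point‑free involution of this set out of the shear automorphisms $(g,a,e)\mapsto(g+ke,\,a-2kg-k^{2}e,\,e)$ of the form $g^{2}+ae$ (composed with $\tau$ and iterated back into the region $e<2g\le a$), and to show that its interaction with the two anchoring triples forces $|V_{1}\setminus V_{2}|$ to have the parity needed so that $|V_{1}|\equiv 1-(-1)^{x_{p}}$ over $2$ — the point being that $x_{p}$ is even exactly when $x_{p}+y_{p}\equiv y_{p}-x_{p}\pmod 4$. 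Equivalently, in the language of the minus–continued–fraction cycles: $\kappa^{+}(8p)$ is the sum of the cycle lengths, the two ambiguous (palindromic) cycles are even and account for the two sum‑of‑two‑squares triples, and $\kappa^{+}(8p)/2$ is congruent mod $2$ to half the sum of those two lengths plus the number of odd‑length chiral cycle‑pairs; one then identifies this combination with $\bigl(1-(-1)^{x_{p}}\bigr)/2$.

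I expect the main obstacle to be exactly this last parity bookkeeping — controlling $|V_{1}\setminus V_{2}|$ (equivalently, the odd‑length chiral cycle‑pairs) modulo $2$ and combining it with the contribution of the two fixed triples to produce $1-(-1)^{x_{p}}\pmod 4$. Everything preceding it is a routine, if slightly lengthy, translation; it is only in this final step that the representation $p=x_{p}^{2}+y_{p}^{2}$ (and the parity of $x_{p}$) must enter in an essential way.
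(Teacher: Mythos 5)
Your reduction of $\kappa^{+}(8p)$ to the count of triples $(g,a,e)\in\mathbb{Z}_{\ge1}^{3}$ with $g^{2}+ae=2p$ and $e<2g$, via the two involutions $(a,c,m)\mapsto(c,a,m)$ and $\tau\colon(g,a,e)\mapsto(g,e,a)$, is correct as far as it goes, and it is a genuinely different route from the paper's, which works with the palindromic continued-fraction expansions of the ambiguous classes and computes convergent matrices modulo $4$. But everything you actually verify only yields $\kappa^{+}(8p)\equiv 0\pmod 2$, which is already known. The entire content of the theorem --- the dependence on the parity of $x_{p}$ --- is concentrated in the one step you defer: determining $|V_{1}\setminus V_{2}|\bmod 2$. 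You propose an involution built from the shear maps $(g,a,e)\mapsto(g+ke,\,a-2kg-k^{2}e,\,e)$ ``iterated back into the region $e<2g\le a$,'' but you neither define this map precisely, nor show it preserves the region, nor show it is a fixed-point-free (or fixed-point-controlled) involution; and it is exactly at its fixed points that $x_{p}$ would have to enter. There is moreover no evidence the plan closes: your own bookkeeping gives $|V_{1}\cap V_{2}|\equiv\#\{\text{fixed points of }\tau\text{ in }V_{1}\cap V_{2}\}\pmod 2$, which equals $2$ or $1$ according as $y_{p}>3x_{p}$ or not (your inequality is stated in the reversed direction), a condition with no relation to the parity of $x_{p}$; so $|V_{1}\setminus V_{2}|\bmod 2$ would have to depend on whether $y_{p}>3x_{p}$ in a way that exactly cancels this, and nothing in the sketch produces that cancellation. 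As written this is a plan, not a proof.

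For contrast, in the paper the parity of $x_{p}$ enters through a concrete identity: for the ambiguous class $\alpha=\overline{[a_{1},\dots,a_{n},a_{n},\dots,a_{1}]}$ the convergent matrix $\bigl(\begin{smallmatrix}p&q\\ r&s\end{smallmatrix}\bigr)$ of the half-period satisfies $(p+s)x_{p}=(r-q)y_{p}$, which pins the matrix down modulo $2$ and then modulo $4$ in terms of $x_{p}\bmod 2$, after which Lemma \ref{number} converts this into $S(\alpha)\bmod 4$; the non-ambiguous classes are paired as $\gamma_i,\ -1/\gamma_i'$ and contribute $0$ modulo $4$ by Lemma \ref{parity}, and the case $N(\varepsilon_{8p})=-1$ is quoted from \cite{mori}. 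To salvage your combinatorial approach you would need an explicit, verified involution on $\{(g,a,e):e<2g\le a,\ g^{2}+ae=2p\}$ whose fixed-point set is computable in terms of $(x_{p},y_{p})$; that is the missing idea, and it is the whole theorem.
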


\begin{thm*}{\ref{pq}}
Let $p$ and $q$ ($p<q$) be prime numbers such that $p \equiv q \equiv 3$ (mod 4). Then we have 
\[
\kappa^+(pq) \equiv 1-\left(\frac{q}{p}\right)\ \mathrm{(mod\ 4)}.
\]
\end{thm*}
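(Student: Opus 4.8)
The plan is to count $m$-reduced forms directly, reduce the count modulo $4$ using an involution, and pin down the residue with genus theory.

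Write $D=pq$; since $D\equiv 1\pmod 4$ is a fundamental discriminant, every form of discriminant $D$ is primitive, and an $m$-reduced number $w$ of discriminant $D$ is the same thing as a triple $(a,B,c)$ with $a,c\ge 1$, $B$ odd, $B^2-4ac=D$ and $a+c<B$ (the inequality encoding $w>1$, $0<w'<1$). The map $\iota\colon w\mapsto 1/w'$, i.e.\ $[a,-B,c]\mapsto[c,-B,a]$, is an involution of $\mathcal Q^+(D)$, and its fixed forms are those with $a=c$, i.e.\ $(B-2a)(B+2a)=D$; the only factorizations of $pq$ into two positive factors are $1\cdot pq$ and $p\cdot q$, and since $p\equiv q\pmod 4$ both yield integral $a,B$, so $\iota$ has exactly two fixed forms: $w_1$ with $a=c=(pq-1)/4$ and $w_2$ with $a=c=(q-p)/4$. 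Consequently $\kappa^+(D)=2G-2$, where $G$ is the number of $\iota$-orbits, so it remains to prove that $G$ is odd exactly when $\left(\tfrac qp\right)=1$.

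Next I would locate $w_1$ and $w_2$ in the narrow class group. Here $N(\varepsilon_D)=1$, since no element of $\Q(\sqrt{pq})$ has norm $-1$ when two primes $\equiv 3\pmod 4$ ramify; by genus theory the narrow class group then has $2$-rank $1$, with exactly two ambiguous classes, the principal class $C_0$ and one further $2$-torsion class $C_1$, the latter being the nontrivial element of $\ker(\text{narrow}\to\text{wide})$, generated by $(\sqrt{pq})$. Now $w_1$ represents $-1$, and a primitive form represents $-1$ only if its narrow class lies in that kernel and its ideal has a generator of negative norm, which — as $N(\varepsilon_D)=1$ — forces $w_1\in C_1$. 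On the other hand $w_2$ represents $q$, so it lies in the class $[\mathfrak q]$ of the ramified prime above $q$, and $[\mathfrak q]=C_0$ iff $q$ is a norm from the ring of integers, iff $x^2-pqy^2=4q$ is solvable, which by reduction mod $p$ and genus theory happens exactly when $\left(\tfrac qp\right)=1$. Thus if $\left(\tfrac qp\right)=1$ the two $\iota$-fixed forms $w_1,w_2$ lie in the two \emph{different} ambiguous classes, while if $\left(\tfrac qp\right)=-1$ they both lie in $C_1$.

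To finish one needs the residue modulo $4$, not merely modulo $2$, and this is the hard step. The structural half is in hand: $\iota$ carries the cycle of a class $C$ to the cycle of $C^{-1}$, so non-ambiguous cycles occur in $\iota$-matched pairs of equal length, while $\iota$ acts on each ambiguous cycle of length $\ell$ as a reflection $i\mapsto k-i$ of $\Z/\ell\Z$, having one fixed form when $\ell$ is odd and $0$ or $2$ when $\ell$ is even; with the previous paragraph this makes $\ell(C_0),\ell(C_1)$ both odd if $\left(\tfrac qp\right)=1$ and both even if $\left(\tfrac qp\right)=-1$, and hence (writing $\kappa^+(D)=\ell(C_0)+\ell(C_1)+2\!\sum_{C\ne C^{-1}}\ell(C)$) fixes $\kappa^+(D)$ modulo $2$. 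For the extra factor of $2$ I would go back to the counting model: after the substitution $(m,t,x)=(B-2\min(a,c),\,\min(a,c),\,B-a-c)$ one has $G=\#\{(m,t,x)\in\Z_{\ge 1}^3 : x\le m,\ m^2+4tx=D\}$, and I would apply the Zagier "windmill" involution on $\{(m,t,x)\in\Z_{\ge 1}^3:m^2+4tx=D\}$ — well defined since $D$ is not a square, with its two fixed points $(1,1,(pq-1)/4)$ and $(p,p,(q-p)/4)$ once again coming from the factorizations of $pq$ — and track the auxiliary inequality $x\le m$ through the three branches of the windmill map to read off that the parity of $G$ is $\tfrac12\bigl(1+\left(\tfrac qp\right)\bigr)$. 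Making this last bookkeeping come out, and dovetail with the genus-theoretic placement of $w_1$ and $w_2$, is where I expect the real difficulty to lie.
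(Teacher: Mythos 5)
Your reduction to the orbit count $G$ is sound: the involution $\iota\colon w\mapsto 1/w'$ on $\mathcal{Q}^+(pq)$ does have exactly the two fixed points you name, so $\kappa^+(pq)=2G-2$ and the theorem is equivalent to the assertion that $G$ is odd precisely when $\left(\frac{q}{p}\right)=1$. The genus-theoretic placement of $w_1$ and $w_2$ is also correct, and it plays the role that Theorem \ref{calpq} plays in the paper. But your argument stops exactly where the theorem begins. Everything you have actually established determines $\kappa^+(pq)$ only modulo $2$: the class-by-class analysis (both ambiguous cycle lengths odd when $\left(\frac{q}{p}\right)=1$, both even when $\left(\frac{q}{p}\right)=-1$) only reproves that $\kappa^+(pq)$ is even, which is already forced by $1-\left(\frac{q}{p}\right)\in\{0,2\}$. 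The parity of $G$ itself --- the entire content of the statement --- is delegated to ``apply the Zagier windmill involution and track the inequality $x\le m$ through the three branches.'' That is not bookkeeping. The windmill involution is an involution of the full solution set $\{(m,t,x)\in\Z_{\ge 1}^3 : m^2+4tx=pq\}$ and does not preserve the subset $\{x\le m\}$ whose cardinality is $G$; its two fixed points tell you only that the \emph{unrestricted} count is even, which says nothing about $G\bmod 2$. Some new pairing adapted to the constraint $x\le m$, interacting correctly with the two fixed triples, would have to be constructed, and you give no indication of what it is --- indeed you flag this step yourself as ``where the real difficulty lies.'' As written, this is a correct reformulation plus an unexecuted plan, i.e.\ a genuine gap.

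For contrast, the paper reaches the same arithmetic dichotomy by a different combinatorial route: it expands the distinguished ambiguous representative as $\alpha=\overline{[a_0,a_1,\dots,a_n,a_{n+1},a_n,\dots,a_1]}$, analyzes the convergent matrix modulo $2$ and $4$ through the automorph built from $t_{pq}$ and $u_{pq}$, applies Lemma \ref{number} to pin down $S(\alpha)\bmod 4$, and converts the answer into $\left(\frac{q}{p}\right)$ via Theorem \ref{calpq}, whose proof is the norm computation $N=\pm p$ that is the counterpart of your genus-theoretic step. So the arithmetic halves of the two approaches essentially coincide, but the paper's combinatorial half is carried out in full while yours is the missing piece.
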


\begin{thm*}{\ref{mcal4pq}}
Let $p$ and $q$ ($p<q$) be prime numbers such that $p \equiv q \equiv 3$ (mod 4). Then we have 
\[
\kappa^+(4pq) \equiv 2\ \mathrm{(mod\ 4)}.
\]
\end{thm*}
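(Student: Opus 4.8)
The plan is to use the description $\kappa^{+}(4pq)=\#\mathcal Q^{+}(4pq)$, where an $m$-reduced number of discriminant $4pq$ is encoded by a primitive form $(a,b,c)$ with $a>0,\ c>0,\ b<0,\ b^{2}-4ac=4pq$ satisfying the $m$-reduced inequalities; equivalently $\kappa^{+}(4pq)=\sum_{[\mathfrak a]\in\mathcal R^{+}(4pq)}\ell([\mathfrak a])$, the sum of the minus continued fraction period lengths. I would first study the involution $\iota\colon\mathcal Q^{+}(4pq)\to\mathcal Q^{+}(4pq)$ given by $\iota(w)=1/w'$, which on forms is $(a,b,c)\mapsto(c,b,a)$ and which induces class inversion $[\mathfrak a]\mapsto[\mathfrak a]^{-1}$ on $\mathcal R^{+}(4pq)$. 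A fixed point of $\iota$ is an $m$-reduced $w$ with $ww'=1$, i.e.\ a primitive form $(a,b,a)$ of discriminant $4pq$; writing $b=-2\beta$ this forces $(\beta-a)(\beta+a)=pq$, so $\{\beta-a,\beta+a\}$ equals $\{1,pq\}$ or $\{p,q\}$ and $a\in\{(pq-1)/2,\ (q-p)/2\}$. Since $p\equiv q\equiv 3\pmod 4$ we have $pq\equiv 1\pmod 4$ and $p\equiv q\pmod 4$, so in either case $a$ is even, $\gcd(a,b)$ is even, and the form is imprimitive. Hence $\iota$ has no fixed point and $\kappa^{+}(4pq)=2m$ with $m:=\#\bigl(\mathcal Q^{+}(4pq)/\iota\bigr)$; the theorem is thus equivalent to $m$ being odd.

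To analyze $m$ modulo $2$, I would split $\mathcal R^{+}(4pq)$ into ambiguous classes ($[\mathfrak a]=[\mathfrak a]^{-1}$) and conjugate pairs $\{[\mathfrak a],[\mathfrak a]^{-1}\}$ of the rest. Because $\iota$ is fixed-point-free, every ambiguous cycle has even length and contributes half its length to $m$, while each non-ambiguous pair contributes its common period length; so $m\equiv\#\{\text{ambiguous }[\mathfrak a]:\ell([\mathfrak a])\equiv 2\ (\mathrm{mod}\ 4)\}+\#\{\text{non-ambiguous pairs with odd }\ell\}\pmod 2$. Now $pq\equiv 1\pmod 4$, so $pq$ is the field discriminant, $\Z[\sqrt{pq}]$ has conductor $2$, and $N(\varepsilon_{pq})=1$; with this, genus theory accounts for the ambiguous classes of discriminant $4pq$ by the four primitive forms $(1,0,-pq),\ (p,0,-q),\ (q,0,-p),\ (pq,0,-1)$ (the last being non-principal because $x^{2}-pq\,y^{2}=-1$ is unsolvable when $N(\varepsilon_{pq})=1$). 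The remaining work is a parity computation of the period lengths of these four ambiguous minus continued fractions and of the non-ambiguous ones.

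For the non-ambiguous classes I expect to cancel the odd-period ones in pairs by composing with an ambiguous class, or --- the route I would actually pursue --- to compare the count for discriminant $4pq$ with that for the maximal order by tracking how each minus continued fraction cycle of $\Z[\tfrac{1+\sqrt{pq}}{2}]$ lifts (splits or stays) under passage to the conductor-$2$ order $\Z[\sqrt{pq}]$; feeding in Theorem~\ref{pq}, namely $\kappa^{+}(pq)\equiv 1-\bigl(\tfrac{q}{p}\bigr)\pmod 4$, together with $N(\varepsilon_{pq})=1$, the comparison should yield $\kappa^{+}(4pq)\equiv\kappa^{+}(pq)+\bigl(1+\bigl(\tfrac{q}{p}\bigr)\bigr)\equiv 2\pmod 4$, the $\bigl(\tfrac{q}{p}\bigr)$-dependence cancelling. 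The main obstacle is exactly the determination of this correction term: making precise how the minus continued fraction cycles of the maximal order break up (or, in the ambiguous-cycle formulation, pinning down which one of the four ambiguous cycles has length $\equiv 2\pmod 4$ and why all others have length $\equiv 0\pmod 4$), which seems to require a careful reversal/palindrome analysis of minus continued fractions combined with the genus relations among the four ambiguous classes and the hypothesis $p\equiv q\equiv 3\pmod 4$.
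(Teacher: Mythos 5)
Your opening reduction is sound: $\iota(w)=1/w'$ is indeed a fixed-point-free involution of $\mathcal Q^{+}(4pq)$ (the primitivity argument ruling out forms $(a,b,a)$ is correct, since $p\equiv q\equiv 3\pmod 4$ forces $a=(pq-1)/2$ or $(q-p)/2$ to be even while $b$ is already even), so you correctly get $\kappa^{+}(4pq)\equiv 0\pmod 2$ and the equivalence with the parity of $m=\#(\mathcal Q^{+}(4pq)/\iota)$. But from that point on the proposal stops being a proof. The entire content of the theorem is the mod-$4$ refinement, and that is precisely the step you leave undone: the claim that the comparison between the conductor-$2$ order and the maximal order ``should yield'' $\kappa^{+}(4pq)\equiv\kappa^{+}(pq)+\bigl(1+\bigl(\tfrac{q}{p}\bigr)\bigr)\pmod 4$ is asserted, not derived, and you yourself flag the determination of this correction term (equivalently, pinning down which ambiguous cycles have length $\equiv 2\pmod 4$) as an unresolved obstacle. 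Nothing in the sketch indicates how the hypothesis $p\equiv q\equiv 3\pmod 4$ would actually enter that computation, so the argument cannot be completed as written.

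There is also a structural problem with the route you choose: you feed in Theorem \ref{pq} as an input, but in this paper the proof of Theorem \ref{pq} itself invokes the proof of Theorem \ref{mcal4pq} (to obtain $t_{pq}\equiv 6\pmod 8$ when $u_{pq}$ is even), so your plan is circular unless you supply an independent proof of Theorem \ref{pq}. The paper avoids all of this by a direct computation: it uses Proposition \ref{equation} and Lemma \ref{equivalent} to reduce to the two palindromic classes $\alpha$ and $\beta$, writes the convergent matrix $\left(\begin{smallmatrix}p&q\\ r&s\end{smallmatrix}\right)$ of the half-period, determines its residue mod $2$ in each case from the congruences imposed by $t_{4pq}$, $u_{4pq}$ and the coefficients $a_0,a_{n+1}$, invokes Kutsuna's identity $p^2\equiv(\tfrac{a_0}{2}+r)^2+(-1)^{n+1}\pmod 4$ to fix the parity of $n$, and concludes $S(\alpha)\equiv 2\pmod 4$ from Lemma \ref{number} --- with no reference to $\kappa^{+}(pq)$ at all. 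If you want to salvage your approach, you would need to carry out the reversal/palindrome analysis of the minus continued fractions of the four ambiguous classes explicitly; as it stands, the proposal establishes only the statement mod $2$.
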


\begin{thm*}{\ref{pand4p}}
Let $p$ be a prime number such that $p \equiv 1$ (mod 4). Then we have
\[
p \equiv 1\ \mathrm{(mod\ 8)} \Rarrow \kappa(p) \equiv \kappa(4p)+2\ \mathrm{(mod\ 4)},
\]
\[
p \equiv 5\ \mathrm{(mod\ 8)} \Rarrow \kappa(p) \equiv \kappa(4p)\ \mathrm{(mod\ 4)}.
\]
\end{thm*}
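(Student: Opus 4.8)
The plan is to reduce the assertion, via reduction theory and genus theory, to a single congruence between the lengths of two continued‑fraction periods. First I would note that every reduced cycle occurring here has \emph{odd} length. For a prime $p\equiv 1\pmod 4$ it is classical that $N(\varepsilon_p)=-1$, and since the unit index $u:=[\,\Z[\tfrac{1+\sqrt p}{2}]^{\times}:\Z[\sqrt p]^{\times}\,]$ equals $1$ or $3$ (both odd), one gets $N(\varepsilon_{4p})=N(\varepsilon_p)^{u}=-1$ as well. On the other hand, traversing a $\mathrm{GL}_2(\Z)$‑cycle of reduced numbers of discriminant $D$ once composes $\ell$ elementary reduction steps, each of determinant $-1$, into the automorph attached to $\varepsilon_D$, whose determinant is $N(\varepsilon_D)$; hence $(-1)^{\ell}=N(\varepsilon_D)$, so for $D\in\{p,4p\}$ every cycle has odd length $\ell(\mathfrak c)$.

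Next I would pass from $\kappa$ to ambiguous cycles. Writing $\kappa(D)=\sum_{\mathfrak c\in\mathcal R(D)}\ell(\mathfrak c)$, the period‑reversal $w\mapsto -1/w'$ is a length‑preserving involution of $\mathcal R(D)$ sending a cycle to the cycle of the inverse ideal class; its fixed cycles are the ambiguous ones, and the remaining cycles come in pairs of equal (odd) length. Counting modulo $4$ then gives $\kappa(D)\equiv h(D)+2\,\sigma(D)\pmod 4$, where $\sigma(D)$ is the number of ambiguous cycles of length $\equiv 3\pmod 4$. Now $h(p)$ is odd (as $p$ is prime, $h^{+}=h$ and the $2$‑rank is $0$), so discriminant $p$ has exactly one ambiguous cycle, the principal one; and from the conductor formula $h^{+}(4p)=\tfrac{h^{+}(p)}{u}\bigl(2-(\tfrac 2p)\bigr)$ together with the classical facts $(u,(\tfrac 2p))=(1,1)$ for $p\equiv 1\pmod 8$ and $(3,-1)$ for $p\equiv 5\pmod 8$ — and $h^{+}=h$ throughout — we get $h(4p)=h(p)$, so discriminant $4p$ likewise has exactly one ambiguous cycle. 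Hence $\kappa(p)-\kappa(4p)\equiv 2\bigl(\sigma(p)-\sigma(4p)\bigr)\pmod 4$, and the theorem becomes the statement that $\sigma(p)$ and $\sigma(4p)$ differ precisely when $p\equiv 1\pmod 8$.

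The unique ambiguous cycle of discriminant $4p$ is the period of the continued fraction of $\sqrt p$, of (odd) length $k$, while the unique ambiguous cycle of discriminant $p$ is the period of the continued fraction of $\tfrac{1+\sqrt p}{2}$, of (odd) length $k'$; thus $\sigma(4p)=1\iff k\equiv 3\pmod 4$ and $\sigma(p)=1\iff k'\equiv 3\pmod 4$, and everything reduces to the purely continued‑fraction assertion
\[
k\equiv k'\pmod 4\iff p\equiv 5\pmod 8 .
\]
To prove it I would use the classical comparison of these two expansions: $\sqrt p$ and $\tfrac{1+\sqrt p}{2}$ are interchanged by $x\mapsto 2x-1$, a rational linear fractional transformation of determinant $2$, so a full period of either expansion covers a period of the other up to a finite, explicitly describable list of insertions and deletions of partial quotients. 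Running once around a period while tracking the reduced forms of discriminant $4p$ with even leading coefficient — the place where the prime $2$, which is split in $\Z[\tfrac{1+\sqrt p}{2}]$ if $p\equiv 1\pmod 8$ and inert if $p\equiv 5\pmod 8$, enters the cycle — should yield $k-k'$ modulo $4$ and hence the displayed equivalence.

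I expect the first two paragraphs to be essentially formal (indefinite reduction theory, the reversal involution, genus theory and the conductor formula). The main obstacle is the last step: pinning down the exact change in period length modulo $4$ on passing between the continued fractions of $\sqrt p$ and $\tfrac{1+\sqrt p}{2}$, and it is precisely in locating where and how the prime $2$ contributes to the period that the class of $p$ modulo $8$ must be used decisively. As a sanity check I would first verify the conclusion directly for the primes $p=5,13,17,29,41$ before attempting the general argument.
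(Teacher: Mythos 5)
Your reduction $\kappa(D)\equiv h(D)+2\sigma(D)\pmod 4$ via the reversal involution is sound and is, in spirit, how the paper also begins (all cycle lengths are odd because $N(\varepsilon_p)=N(\varepsilon_{4p})=-1$, and non-ambiguous cycles pair off). The gap is in your computation of $h(4p)$: the index $\mu$ with $\varepsilon_{4p}=\varepsilon_p^{\mu}$ is \emph{not} always $3$ when $p\equiv 5\pmod 8$. One has $\mu=3$ exactly when $u_p$ is odd, i.e.\ when $\varepsilon_p\notin\Z[\sqrt p]$; for $p\equiv 5\pmod 8$ with $u_p$ even (e.g.\ $p=37$, $\varepsilon_{37}=6+\sqrt{37}$) one gets $\mu=1$ and $h(4p)=3h(p)$, not $h(p)$. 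This falsifies the statement you reduce everything to: for $p=37$ the two ambiguous cycles have lengths $l(\sqrt{37})=1$ and $l\bigl(\tfrac{1+\sqrt{37}}{2}\bigr)=3$, which \emph{do} differ modulo $4$ even though $37\equiv 5\pmod 8$; the theorem survives only because $h(148)=3\equiv h(37)+2\pmod 4$ absorbs the discrepancy. The correct reduction, which is the one the paper makes using Proposition \ref{classnumber}, is that the two principal lengths agree mod $4$ when $u_p$ is odd and differ by $2$ when $u_p$ is even; the case split is governed by the parity of $u_p$, and only the one-way implication $p\equiv 1\pmod 8\Rightarrow u_p$ even is available.

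Beyond that, the decisive step is announced rather than proved: you defer the comparison of the periods of $\sqrt p$ and $\tfrac{1+\sqrt p}{2}$ to an unspecified analysis of the determinant-$2$ map $x\mapsto 2x-1$, and (because of the error above) you would be trying to prove a false target. The paper instead writes the symmetric periods $\overline{[a_0,a_1,\dots,a_n,a_n,\dots,a_1]}$ and $\overline{[b_0,b_1,\dots,b_m,b_m,\dots,b_1]}$, forms the convergent matrices, and pins them down modulo $2$ and $4$ using the automorph identities together with $p=x_p^2+y_p^2$ and the residues of $t_p,u_p$; in the case $u_p$ even it exhibits an explicit relation between the two convergent matrices forcing $(-1)^n=(-1)^{m+1}$, whence $l(\alpha)\equiv l(\gamma)+2\pmod 4$. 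Until you supply an argument of comparable precision, with the corrected case split on $u_p$, the proposal does not establish the theorem.
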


\begin{thm*}{\ref{cal8p}}
Let $p$ be a prime number such that $p \equiv 1$ (mod 4). Then we have 
\[
p \equiv 1\ \mathrm{(mod\ 8)} \Rarrow \kappa(8p) \equiv 2\ \mathrm{(mod\ 4)},
\]
\[
p \equiv 5\ \mathrm{(mod\ 8)} \Rarrow \kappa(8p) \equiv 0\ \mathrm{(mod\ 4)}.
\]

\end{thm*}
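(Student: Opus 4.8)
The plan is to count the reduced numbers of discriminant $8p$ directly, using that $8p$ is the fundamental discriminant of $\Q(\sqrt{2p})$ and organising the count by $\mathrm{GL}_2(\Z)$-cycles. Write $\kappa(8p)=\sum_{C\in\mathcal R(8p)}\ell(C)$, where $\ell(C)$ is the period length of the regular continued fraction expansion of a reduced number in the class $C$; recall the classical fact that the parity of $\ell(C)$ is independent of $C$, being odd for all $C$ when $N(\varepsilon_{8p})=-1$ and even for all $C$ when $N(\varepsilon_{8p})=+1$. Consider the involution $\iota\colon\mathcal Q(8p)\to\mathcal Q(8p)$, $w\mapsto -1/w'$. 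It reverses continued fraction periods, so it sends the cycle of a class $C$ to the cycle of its inverse, and its fixed points are precisely the reduced numbers attached to forms $(a,b,-a)$, that is, to solutions of $2p=a^{2}+(b/2)^{2}$. Since $p=x_p^{2}+y_p^{2}$ is essentially the unique two-squares decomposition, $2p=(x_p+y_p)^{2}+(x_p-y_p)^{2}$ gives exactly two such reduced numbers, so $\iota$ has exactly two fixed points and $\kappa(8p)$ is even.

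First I would reduce $\kappa(8p)\bmod 4$ to data attached to the two ambiguous (that is, $\iota$-stable-cycle) classes. By genus theory $8p$ has exactly two prime-discriminant divisors, $8$ and $p$, so the narrow class group has $2$-rank $1$ and there are exactly two ambiguous classes: the principal class $C_{0}$ and the class $C_{1}$ of a ramified prime (above $2$, equivalently above $p$). Non-ambiguous classes pair off with their inverses, which have equal period length, so their total contribution to $\kappa(8p)$ is $2\sum_{\text{pairs}}\ell(C)$; reducing this modulo $4$ with the help of the common parity of the $\ell(C)$ leaves only $\ell(C_{0})$, $\ell(C_{1})$ and $h(8p)$ to account for. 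The technical heart is the claim
\[
\ell(C_{0})-\ell(C_{1})\equiv 2\pmod 4 .
\]
I would prove this by examining the palindromic structure of the two $\iota$-stable cycles and tracking how the two $\iota$-fixed reduced numbers produced by $2p=(x_p+y_p)^{2}+(x_p-y_p)^{2}$ are distributed among them — one in each cycle when $N(\varepsilon_{8p})=-1$, both in a single cycle (and none in the other) when $N(\varepsilon_{8p})=+1$ — and reading off the parity of the resulting half-periods. Feeding the claim back into the reduction yields $\kappa(8p)\equiv h^{+}(8p)-2\pmod 4$ when $N(\varepsilon_{8p})=-1$, and $\kappa(8p)\equiv 2\pmod 4$ when $N(\varepsilon_{8p})=+1$.

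Next I would supply the two remaining arithmetic inputs. An elementary factorisation argument shows that $p\equiv 5\pmod 8$ forces $N(\varepsilon_{8p})=-1$: if $\varepsilon_{8p}=t+u\sqrt{2p}$ had norm $+1$, then from $(t-1)(t+1)=2pu^{2}$ — with $t$ odd and $u$ even — one splits the coprime halves $\tfrac{t\mp1}{2}$ according to where the factors $2$ and $p$ land, and the only split compatible with congruences modulo $8$ produces an honest solution of $x^{2}-2py^{2}=-1$, a contradiction. Secondly, the Rédei–Reichardt criterion gives $4\mid h^{+}(8p)$ if and only if $\left(\tfrac{2}{p}\right)=1$, i.e.\ exactly when $p\equiv 1\pmod 8$, and $h^{+}(8p)\equiv 2\pmod 4$ when $p\equiv 5\pmod 8$. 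Combining: if $p\equiv 1\pmod 8$ then both possibilities for the sign of $N(\varepsilon_{8p})$ give $\kappa(8p)\equiv 2\pmod 4$ (in the $-1$ case because then $h^{+}(8p)\equiv 0$); if $p\equiv 5\pmod 8$ then necessarily $N(\varepsilon_{8p})=-1$ and $h^{+}(8p)\equiv 2$, hence $\kappa(8p)\equiv h^{+}(8p)-2\equiv 0\pmod 4$. This is the asserted dichotomy.

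The main obstacle is the congruence $\ell(C_{0})-\ell(C_{1})\equiv 2\pmod 4$: it requires controlling the internal symmetry of the two ambiguous continued-fraction cycles, not merely their existence, and it is precisely here that the hypothesis $p\equiv 1\pmod 4$ enters through the two-squares decomposition of $2p$. A secondary point needing care is the bookkeeping that isolates $\ell(C_{0})+\ell(C_{1})+h(8p)$ in $\kappa(8p)\bmod 4$ — in particular, confirming that in every configuration that actually occurs there are exactly two ambiguous $\mathrm{GL}_2(\Z)$-classes, which relies on the fact that the case $N(\varepsilon_{8p})=+1$ with $p\equiv 5\pmod 8$ does not arise.
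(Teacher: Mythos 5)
Your overall architecture is sound and is genuinely different from the paper's. The paper never invokes genus theory or R\'edei--Reichardt: it picks explicit representatives of the two $\iota$-stable classes (the largest roots of $(x_p+y_p)x^2+2(x_p-y_p)x-(x_p+y_p)=0$, $x^2-2\lfloor\sqrt{2p}\rfloor x+\lfloor\sqrt{2p}\rfloor^2-2p=0$, etc.), expands them in continued fractions, and computes the associated automorph matrices modulo $2$ and modulo $4$ in terms of $t_{8p}$, $u_{8p}$ and the representation $2p=(x_p+y_p)^2+(y_p-x_p)^2$, finishing with Lemma \ref{number}. Your route instead reduces everything to $\ell(C_0)-\ell(C_1)\equiv 2\pmod 4$ plus $h^+(8p)\bmod 4$ (via R\'edei--Reichardt) plus the fact that $p\equiv 5\pmod 8$ forces $N(\varepsilon_{8p})=-1$; your bookkeeping of the non-ambiguous classes, which keeps the $h(8p)-2$ term visible in the case $N(\varepsilon_{8p})=-1$, is if anything more explicit than the corresponding reduction in the paper, and your final assembly of the two cases is arithmetically consistent (I checked it against $p=5,13,17,41$).

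The genuine gap is the congruence $\ell(C_0)-\ell(C_1)\equiv 2\pmod 4$, which you correctly identify as the technical heart but for which your proposed method cannot work as described. Knowing how the two $\iota$-fixed reduced numbers of type $(a,b,-a)$ are distributed between the two stable cycles only determines the \emph{shape} of each palindrome --- $\overline{[c_1,\dots,c_l,c_{l-1},\dots,c_1]}$ versus $\overline{[c_1,\dots,c_l,c_l,\dots,c_1]}$ versus $\overline{[c_0,c_1,\dots,c_{l+1},\dots,c_1]}$ --- and hence only the parity of each $\ell(C_i)$, which is already known from $N(\varepsilon_{8p})$. What you need is the parity of the half-period $l$ in each cycle, i.e.\ $\ell(C_i)\bmod 4$ and not just $\bmod\ 2$, and that is not visible from the symmetry type alone: it requires genuine arithmetic input. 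In the paper this is exactly where all the work goes --- one writes the half-period word as a product of matrices $\left(\begin{smallmatrix}a_i&1\\1&0\end{smallmatrix}\right)$, identifies the entries of the full automorph with $\frac{t_{8p}\pm 2\lfloor\sqrt{2p}\rfloor u_{8p}}{2}$, $u_{8p}$, $d(x_p\pm y_p)$, and so on, pins down the product modulo $2$ (hence the number of odd partial quotients modulo $2$, hence $l\bmod 2$ via Lemma \ref{number}) from congruences modulo $4$ among these quantities. Until you supply an argument of this kind (or an equivalent one, e.g.\ relating $\ell(C_0)-\ell(C_1)\bmod 4$ to the $4$-rank data you are already importing), the proof is not complete; everything else in your outline is correct but elementary by comparison.
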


\begin{thm*}{\ref{4pand8p}}
Let $p$ be a prime number such that $p \equiv 3$ (mod 4). Then we have
\[
p \equiv 3\ \mathrm{(mod\ 8)} \Rarrow \kappa(4p) \equiv \kappa(8p) \equiv 2 \ \mathrm{(mod\ 4)},
\]
\[
p \equiv 7\ \mathrm{(mod\ 8)} \Rarrow \kappa(4p) \equiv \kappa(8p) \equiv 0 \ \mathrm{(mod\ 4)}.
\]
\end{thm*}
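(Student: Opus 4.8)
The plan is to reduce $\kappa(4p)\bmod 4$ and $\kappa(8p)\bmod 4$ each to the parity of a half-period of an ordinary continued fraction, and then to read off that parity from how the prime $2$ factors. \emph{Arithmetic set-up:} since $p\equiv3\pmod4$, both $4p$ and $8p$ are fundamental (discriminants of $\Q(\sqrt p)$ and $\Q(\sqrt{2p})$), and the equations $x^2-py^2=-1$ and $x^2-2py^2=-1$ are insolvable (reduce modulo $4$, resp.\ modulo $p$), so $N(\varepsilon_{4p})=N(\varepsilon_{8p})=1$. Going once around a cycle $C$ of reduced numbers of discriminant $D$ produces an automorph of determinant $(-1)^{\ell(C)}$, where $\ell(C)$ is the period length; when $N(\varepsilon_D)=1$ this equals $1$, so every cycle in discriminant $4p$ or $8p$ has even length. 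Hence $\kappa(4p)$ and $\kappa(8p)$ are even, and modulo $4$ each equals $2$ times the number of cycles $C$ with $\tfrac12\ell(C)$ odd. Reversal of a period carries the cycle of an ideal class to the cycle of its inverse, so the cycles that are not reversal-invariant occur in pairs $\{C,\overline C\}$ of equal (even) length, contributing $\ell(C)+\ell(\overline C)\equiv0\pmod4$; only the reversal-invariant cycles — those attached to $2$-torsion ideal classes — matter.

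Next I would show there is exactly one such cycle in each case. In both $\Q(\sqrt p)$ and $\Q(\sqrt{2p})$ the group of ambiguous ideal classes is generated by $[\mathfrak p_2]$, where $\mathfrak p_2^2=(2)$, since $\mathfrak p_p=(\sqrt p)$, resp.\ $\mathfrak p_2\mathfrak p_p=(\sqrt{2p})$, is principal; and every $2$-torsion ideal class contains an ambiguous ideal. The principal cycle is reversal-invariant, hence of even length, hence contains exactly two ambiguous reduced forms: that of $\mathcal O$ (leading coefficient $1$), and a second one sitting at the halfway position of the period, whose leading coefficient $Q$ is the norm of an ambiguous ideal and satisfies $1<Q<2\sqrt D$ together with $Q\mid 2P$ for some integer $P$ with $1\le P<\sqrt D$. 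Of the candidate norms $2,p,2p$, only $Q=2$ is compatible with these constraints (if $Q=p$ then $p\mid 2P$ with $0<P<\sqrt D<p$ is impossible, and $Q=2p\ge2\sqrt D$). Hence $[\mathfrak p_2]=1$, the wide class group has odd order, the principal cycle is the unique reversal-invariant one, and
\[
\kappa(4p)\equiv\ell(\sqrt p)\pmod4,\qquad \kappa(8p)\equiv\ell(\sqrt{2p})\pmod4 .
\]

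Finally, write $\ell(\sqrt D)=2k$ for $D\in\{p,2p\}$. By the previous step the halfway complete quotient has $Q$-value $2$, so its convergent $(x,y)$ satisfies $x^2-Dy^2=(-1)^k\cdot2$, and since $\mathfrak p_2$ is principal exactly one of $x^2-Dy^2=\pm2$ is solvable. A reduction modulo $8$ (using that $y$ is odd, resp.\ $x$ is even) shows that for $D=p$ and for $D=2p$ alike, solvability of $+2$ forces $p\equiv7\pmod8$ while solvability of $-2$ forces $p\equiv3\pmod8$ — equivalently $\left(\frac2p\right)=1$ versus $\left(\frac{-2}{p}\right)=1$. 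Therefore $k$ is even exactly when $p\equiv7\pmod8$, giving $\ell(\sqrt D)\equiv0\pmod4$, and $k$ is odd exactly when $p\equiv3\pmod8$, giving $\ell(\sqrt D)\equiv2\pmod4$; combined with the congruences above, this is the theorem. When moreover $h(4p)=1$ (resp.\ $h(8p)=1$) the argument computes the period length of $\sqrt p$ (resp.\ $\sqrt{2p}$) outright — the source of the stated continued-fraction corollaries.

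I expect the crux to be the middle step: proving that the principal cycle is the \emph{only} reversal-invariant cycle. This uses the standard ambiguous-class fact, requires care in separating the narrow ($\mathrm{SL}_2$) and wide ($\mathrm{GL}_2$) pictures (which genuinely differ here because $N(\varepsilon_D)=1$), and relies on the bound $Q<2\sqrt D$ for the halfway coefficient, which is tight for the small primes $p=3,7$ and should be checked there by hand; once $\mathfrak p_2$ is known to be principal, the remaining congruence bookkeeping in the last step is routine.
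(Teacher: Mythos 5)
Your proposal is correct and follows essentially the same route as the paper: reduce $\kappa(D)\bmod 4$ to the length of the principal cycle, identify the element of leading coefficient $2$ sitting at the halfway point of that period, and read off the parity of the half-period from which of $x^2-\tfrac{D}{4}y^2=\pm2$ is solvable, which is pinned down by $p\bmod 8$ via $\left(\frac{2}{p}\right)$ and $\left(\frac{-2}{p}\right)$. The only real difference is presentational: you justify the reduction to the principal cycle via ambiguous forms and genus theory, whereas the paper carries it out by the explicit module identity $2[1,\beta]=2\omega_n^{-1}\cdots\omega_0^{-1}[1,\alpha]$ and a norm count.
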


\begin{thm*}{\ref{9p}}
Let $p$ be a prime number such that $p \equiv 1$ (mod 4). Then we have
\[
p=5\ \mathrm{or}\ p \equiv 1\ \mathrm{(mod\ 3)} \Rarrow \kappa(9p) \equiv 2\ \mathrm{(mod\ 4)},
\]
\[
p \equiv 2\ \mathrm{(mod\ 3)}\ (p \not=5)\ \Rarrow \kappa(9p) \equiv 0\ \mathrm{(mod\ 4)}.
\]
\end{thm*}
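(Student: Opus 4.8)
The plan is to compute $\kappa(9p)=\#\mathcal{Q}(9p)$ modulo $4$ by letting a small group of involutions act on the reduced forms of discriminant $9p$ and reading off the cardinality from the fixed-point data — the standard device for congruences of class-number-type quantities mod $2$ and mod $4$. A reduced number $w$ of discriminant $9p$ corresponds to a primitive form $(a,b,c)$, $aw^{2}+bw+c=0$, $a>0$, $b^{2}-4ac=9p$. The decisive first step uses the mirror involution $J\colon w\mapsto -1/w'$, which on forms is $(a,b,c)\mapsto(-c,b,-a)$ and reverses continued-fraction periods, $J\rho J=\rho^{-1}$. Its would-be fixed points are the forms with $a=-c$, i.e.\ the representations $9p=b^{2}+4a^{2}$; but $9\mid b^{2}+4a^{2}$ forces $3\mid b$ and $3\mid a$ (as $-1$ is a non-square mod $3$), so each such representation is $3$ times a representation $p=x_{p}^{2}+y_{p}^{2}$ and the associated form $(a,b,-a)$ is imprimitive. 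Hence $J$ acts without fixed points on $\mathcal{Q}(9p)$: $\kappa(9p)$ is even, and $J$ is a fixed-point-free reflection on every $J$-stable $\rho$-cycle, so all such cycles have even length.

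To go from mod $2$ to mod $4$ I would bring in a second involution $S$ commuting with $J$, built from the arithmetic of the prime $3$ and the ramified prime $\mathfrak{p}\mid p$ (e.g.\ composition with the form supported at $p$, using $[\mathfrak p]^{2}=1$). Then $\{1,J,S,JS\}$ acts on $\mathcal{Q}(9p)$; since $J$ is fixed-point-free, every orbit has size $2$ or $4$, each size-$2$ orbit is stabilised by $\langle S\rangle$ or $\langle JS\rangle$, and counting orbits gives
\[
\kappa(9p)\equiv\#\mathrm{Fix}(S)+\#\mathrm{Fix}(JS)\pmod 4 .
\]

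Now the theorem reduces to evaluating the two fixed sets. I expect $\mathrm{Fix}(JS)$ to be the reduced forms of the ``twisted'' shape $pa=-c$ (equivalently $a=-pc$), i.e.\ representations $9p=b^{2}+4pa^{2}$; writing $b=pb'$ this becomes $p\,b'^{2}+4a^{2}=9$, which has a solution only for $p=5$, where it contributes exactly the pair $(1,-5,-5),(5,-5,-1)$. So $\#\mathrm{Fix}(JS)=2$ if $p=5$ and $=0$ otherwise. The term $\#\mathrm{Fix}(S)$ is where the splitting of $3$ enters: using that $3$ splits in $\Q(\sqrt p)$ exactly when $p\equiv1\pmod3$ and is inert exactly when $p\equiv2\pmod3$, that $h(p)$ is odd and $N(\varepsilon_{p})=-1$ for $p\equiv1\pmod4$, and tracking the relevant period-length parities, I expect $\#\mathrm{Fix}(S)\equiv2\pmod4$ in the split case and $\equiv0\pmod4$ in the inert case. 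Combining gives $\kappa(9p)\equiv2\pmod4$ when $p=5$ or $p\equiv1\pmod3$, and $\kappa(9p)\equiv0\pmod4$ when $p\equiv2\pmod3$, $p\neq5$.

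The main obstacle is exactly the analysis of $S$: exhibiting a second involution that really does commute with $J$ at the level of reduced forms, identifying $\mathrm{Fix}(S)$ as an explicit representation count, and evaluating that count modulo $4$ through the reducedness inequalities — which, as already with $J$, is the true mechanism of the congruence, since the $9\mid D$ representations are systematically divisible by $3$ and the decisive issue is whether the derived forms are both reduced and primitive. The exceptional prime $p=5$ I would settle directly: one checks $\mathcal{Q}(45)=\{(1,-5,-5),(5,-5,-1)\}$, so $\kappa(45)=2$, and $p=5$ is precisely the value for which the twisted fixed set $\mathrm{Fix}(JS)$ is non-empty, which is why it departs from the $p\equiv2\pmod3$ pattern.
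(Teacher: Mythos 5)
Your opening move is sound: the involution $J\colon w\mapsto -1/w'$ does act on $\mathcal{Q}(9p)$ without fixed points, since a fixed point would correspond to a form $(a,b,-a)$ with $b^2+4a^2=9p$, forcing $3\mid a$ and $3\mid b$ and hence imprimitivity; this gives $\kappa(9p)\equiv 0\pmod 2$, and your orbit-counting template $\kappa(9p)\equiv\#\mathrm{Fix}(S)+\#\mathrm{Fix}(JS)\pmod 4$ would be correct \emph{if} a suitable $S$ existed. But that is exactly where the argument has a genuine gap: $S$ is never constructed. Composition with the ramified class $[\mathfrak{p}]$ is an involution of the class group, i.e.\ of $\mathcal{R}(9p)$, not of the set $\mathcal{Q}(9p)$ of individual reduced numbers, so lifting it to an order-two bijection of $\mathcal{Q}(9p)$ that commutes with $J$ (and behaves correctly with respect to the shift $\rho$) is a nontrivial construction you have not supplied. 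Moreover the two facts carrying all the arithmetic content --- that $\mathrm{Fix}(JS)$ consists of the reduced forms with $c=-pa$ (nonempty only for $p=5$), and that $\#\mathrm{Fix}(S)\equiv 2$ or $0\pmod 4$ according as $3$ splits or is inert in $\mathbb{Q}(\sqrt{p})$ --- are stated as expectations rather than proved. The second of these \emph{is} the theorem; you have reformulated the congruence rather than established it, and you acknowledge as much in calling the analysis of $S$ the main obstacle.

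For comparison, the paper sidesteps the need for any global involution beyond $J$. It fixes explicit representatives $\alpha,\beta$ of the ambiguous class, whose periods are palindromic by Lemma \ref{equivalent}, reduces the theorem to computing $l(\alpha)=2n+2\bmod 4$, and reads off the parity of $n$ from the sign in $N(9\,\omega_n^{-1}\cdots\omega_0^{-1})=\pm 9$: since each $\omega_i$ has negative norm, the sign determines the parity of $n$, and which sign occurs is decided by the solvability of $x^2-9py^2=\pm 36$ in $\mathbb{Z}\bigl[\tfrac{1+3\sqrt{p}}{2}\bigr]$, hence by $p\bmod 3$. If you were to complete your program, the evaluation of $\#\mathrm{Fix}(S)$ would require essentially this same computation inside the cycle of $\alpha$, so the missing step is not a formality but the core of the proof.
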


\begin{thm*}{\ref{16p}}
Let $p>2$ be a prime number. Then we have
\[
p=3\ \mathrm{or}\ p \equiv 1\ \mathrm{(mod\ 4)} \Rarrow \kappa(16p) \equiv 2\ \mathrm{(mod\ 4)},
\]
\[
p \equiv 3\ \mathrm{(mod\ 4)} \ (p \not=3)\ \Rarrow \kappa(16p) \equiv 0\ \mathrm{(mod\ 4)}.
\]
\end{thm*}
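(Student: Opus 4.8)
The plan is to analyze $\mathcal{Q}(16p)$ by combining the period-reversal involution on reduced numbers with a conductor-$2$ descent to discriminant $4p$. For a reduced number $w$ set $\iota(w):=-1/w'$; since $-1<w'<0$ gives $\iota(w)>1$ and $w>1$ gives $-1<-1/w<0$, this is an involution of $\mathcal{Q}(16p)$, and on $\mathrm{GL}_2(\Z)$-equivalence classes it carries the class of $w$ to its conjugate (inverse) class, so its fixed classes are exactly the ambiguous ones. The first key step is that $\iota$ has \emph{no} fixed point on $\mathcal{Q}(16p)$: if $\iota(w)=w$, writing $aw^2+bw+c=0$ with $a>0$ and $\gcd(a,b,c)=1$, one finds $c=-a$, hence $16p=b^2+4a^2$, so $b=2\beta$ and $4p=a^2+\beta^2$ with $\gcd(a,\beta)=1$; then $a$ is odd, forcing $a^2\equiv 1$ and $\beta^2\equiv 3\pmod 8$, which is impossible. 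Hence $\kappa(16p)$ is even, and every ambiguous cycle of discriminant $16p$ has \emph{even} period length. Running the same computation on $\mathcal{Q}(4p)$, a fixed point of $\iota$ there requires $p=a^2+\beta^2$ with $a$ odd and $\gcd(a,\beta)=1$, which has exactly one (automatically reduced) solution when $p\equiv 1\pmod 4$ — namely $\{a,|\beta|\}=\{x_p,y_p\}$ in the notation of Theorem~\ref{8p} — and none when $p\equiv 3\pmod 4$; since $\kappa(4p)\equiv\#\{w\in\mathcal{Q}(4p):\iota(w)=w\}\pmod 2$, it follows that $\kappa(4p)$ is odd for $p\equiv 1\pmod 4$ and even for $p\equiv 3\pmod 4$.

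Next, because $16p=2^2\cdot 4p$, extension of ideals from $\mathcal{O}_{16p}$ to $\mathcal{O}_{4p}$ — legitimate for every odd prime $p$ — together with the classical conductor formula for calibers yields an identity
\[
\kappa(16p)=2\,\kappa(4p)+R_p ,
\]
in which the coefficient is exactly $2$ since $\mathcal{O}_{4p}/2\mathcal{O}_{4p}$ is a local ring with unit group of order $2$ (as $2\mid 4p$), and in which $R_p$ is an explicit integer correction term encoding how reducedness behaves under the descent. Feeding in the parity of $\kappa(4p)$ from the first step gives
\[
\kappa(16p)\equiv 2\delta_p+R_p\pmod 4,\qquad \delta_p=\begin{cases}1,& p\equiv 1\ (\mathrm{mod}\ 4),\\ 0,& p\equiv 3\ (\mathrm{mod}\ 4);\end{cases}
\]
note that only the parity of $\kappa(4p)$, not its precise value, enters. (For $p\equiv 3\pmod 4$ one could instead substitute $\kappa(4p)\bmod 4$ from Theorem~\ref{4pand8p}, but this is unnecessary.)

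It remains to prove that $R_p\equiv 0\pmod 4$ for every $p\ne 3$, while $R_3\equiv 2\pmod 4$ (one computes $\kappa(48)=\kappa(12)=2$, so $R_3=-2$). For this I would enumerate the ambiguous cycles of discriminant $16p$ — indexed, via genus theory, by the factorizations of $16p$ — record for each its period length modulo $4$ (known to be even) and whether it is matched by an ambiguous cycle of discriminant $4p$ under the conductor map, and check that this bookkeeping is uniform in $p$ except for the single coincidence $p=3$, equivalently $16p=48$: there the class number $1$ and the very short continued fraction of $\Q(\sqrt 3)$ leave one reduced ambiguous cycle of length $\equiv 2\pmod 4$ unmatched, which forces $R_3\equiv 2$ rather than $0$. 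I expect this last step — determining $R_p$ \emph{exactly} modulo $4$, that is, distinguishing ambiguous period lengths $\equiv 0$ from $\equiv 2\pmod 4$ and isolating the degeneracy at $p=3$ — to be the main obstacle; everything preceding it is the mod-$2$ skeleton supplied by the fixed-point-free involution and the conductor formula. Combining the cases then gives $\kappa(16p)\equiv 2$ when $p\equiv 1\pmod 4$ or $p=3$, and $\kappa(16p)\equiv 0$ when $p\equiv 3\pmod 4$ and $p\ne 3$, as claimed.
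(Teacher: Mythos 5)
There is a genuine gap, and it sits exactly where the theorem lives. Everything you actually prove is mod-$2$ information: the fixed-point-free involution argument correctly shows $\kappa(16p)$ is even, but both cases of the theorem have $\kappa(16p)$ even, so nothing so far distinguishes $\equiv 0$ from $\equiv 2\pmod 4$. The decisive step is then left unproven in two places. First, the identity $\kappa(16p)=2\,\kappa(4p)+R_p$ is not established: there is no ``conductor formula for calibers'' analogous to Proposition \ref{classnumber} for class numbers (that formula governs $h$, not $\kappa=\sum_{[\gamma]}l(\gamma)$, and period lengths do not transform multiplicatively under change of conductor). As written, $R_p$ is simply \emph{defined} to be the difference, so the identity is vacuous and the entire burden of the theorem is transferred onto the unproven claim $R_p\equiv 0$ (resp.\ $2$) $\pmod 4$. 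Second, your proposed way to pin down $R_p$ --- enumerate the ambiguous cycles, record their period lengths modulo $4$, and check the bookkeeping is uniform in $p$ --- is precisely the content of the theorem, and you flag it yourself as the main obstacle without supplying any mechanism for deciding whether an ambiguous period length is $\equiv 0$ or $\equiv 2\pmod 4$.

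The paper's proof supplies exactly that missing mechanism. Since $N(\varepsilon_{16p})=1$, all period lengths are even, the non-ambiguous classes pair off under $w\mapsto -1/w'$ and contribute $0\pmod 4$, and one is reduced to computing $l(\alpha)\pmod 4$ for the ambiguous class with palindromic period $\overline{[a_0,a_1,\dots,a_n,a_{n+1},a_n,\dots,a_1]}$ of length $2(n+1)$. The parity of $n+1$ is then read off from a norm computation: the reduced root $\beta$ of the form with leading coefficient $4$ lies exactly half a period along the cycle of $\alpha$, so the element $4\omega_n^{-1}\cdots\omega_0^{-1}$ carrying $[1,\alpha]$ to $4[1,\beta]$ has norm $\pm 4$ with sign $(-1)^{n+1}$ (each reduced $\omega_i$ has negative norm), and whether $x^2-4py^2=+4$ or $-4$ occurs is decided by $p\bmod 4$ (with $p=3$ the stated exception). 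If you want to salvage your outline, you need a substitute for this sign-of-the-norm step; the involution together with the descent to discriminant $4p$ will not produce it on its own.
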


\begin{thm*}{\ref{calpq}}
Let $p$ and $q$ ($p<q$) be prime numbers such that $p \equiv q \equiv 3$ (mod 4). Then we have 
\[
\kappa(pq) \equiv 1+\left(\frac{q}{p}\right)\ \mathrm{(mod\ 4)}.
\]
\end{thm*}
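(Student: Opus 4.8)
The plan is to deduce Theorem~\ref{calpq} from Theorem~\ref{pq} by establishing the comparison $\kappa(pq)\equiv\kappa^{+}(pq)+2\ \mathrm{(mod\ 4)}$: since $\left(\frac{q}{p}\right)=\pm1$ is odd, $\bigl(1-\left(\frac{q}{p}\right)\bigr)+2\equiv 1+\left(\frac{q}{p}\right)\ \mathrm{(mod\ 4)}$, so the two congruences agree. The arithmetic input is that $p\equiv q\equiv 3\ \mathrm{(mod\ 4)}$ forces $D=pq$ to be a fundamental discriminant with $N(\varepsilon_{D})=+1$ (a prime $\equiv3\ \mathrm{(mod\ 4)}$ divides no element of norm $-1$), whence $h^{+}(pq)=2h(pq)$ and, by genus theory with exactly two ramified primes, $h(pq)$ is odd.

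First I would make the comparison explicit via the classical passage between the regular and the ``minus'' continued fraction. Because $N(\varepsilon_{D})=+1$, every period of the regular expansion of a reduced number of discriminant $D$ has even length, and the standard identity turning a regular period $[\,\overline{a_{0},a_{1},\dots,a_{2k-1}}\,]$ into a purely periodic minus expansion shows that the cycle of reduced numbers of a wide ideal class breaks into exactly the two cycles of $m$-reduced numbers of the two narrow classes above it, with minus-period lengths the two alternating partial-quotient sums $\sum_{i\ \mathrm{even}}a_{i}$ and $\sum_{i\ \mathrm{odd}}a_{i}$. Summing over all wide classes gives $\kappa^{+}(pq)=\sum_{w\in\mathcal{Q}(pq)}\lfloor w\rfloor$, hence
\[
\kappa(pq)-\kappa^{+}(pq)=\sum_{w\in\mathcal{Q}(pq)}\bigl(1-\lfloor w\rfloor\bigr)=\sum_{\mathcal{C}}\;\sum_{i\in\mathcal{C}}\bigl(1-a_{i}\bigr),
\]
the outer sum over the cycles of reduced numbers (i.e. the wide classes), and it remains to prove this is $\equiv2\ \mathrm{(mod\ 4)}$.

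For this I would use the involution $\iota\colon w\mapsto-1/w'$ on $\mathcal{Q}(pq)$, which reverses the cyclic order of each cycle and whose fixed points correspond to forms $(a,b,-a)$, i.e. to solutions of $pq=b^{2}+(2a)^{2}$; as $pq$ is not a sum of two squares (it is divisible by $p\equiv3\ \mathrm{(mod\ 4)}$ to an odd power), $\iota$ has no fixed point. Since $h(pq)$ is odd the wide class group has no $2$-torsion, so there is a unique $\iota$-stable cycle $\mathcal{C}_{0}$, the principal one; it has even length and $\iota$ acts on it as a fixed-point-free reflection, so its period is a ``double palindrome'' --- matched pairs $\{a_{j},a_{c-1-j}\}$ of equal partial quotients together with two unmatched ``central'' partial quotients $a_{j_{1}},a_{j_{2}}$ --- while all other cycles fall into $\iota$-swapped pairs whose two members have identical multisets of partial quotients. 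A swapped pair contributes $2\sum_{i\in\mathcal{C}}(1-a_{i})$, and a matched pair of $\mathcal{C}_{0}$ contributes $2(1-a_{j})$: both are multiples of $2$ whose class modulo $4$ is a parity count of how many of the relevant partial quotients are even. Hence modulo $4$, $\kappa(pq)-\kappa^{+}(pq)$ equals $(1-a_{j_{1}})+(1-a_{j_{2}})$ plus $2$ times a single parity count.

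The step I expect to be the main obstacle is this last evaluation. One must pin down the two central partial quotients of $\mathcal{C}_{0}$ and control the parity count. Here I would use the explicit reduced ambiguous forms of discriminant $pq$, coming from the factorizations $pq=1\cdot pq$ and $pq=p\cdot q$ (outer coefficient $(pq-1)/4$, resp. $(q-p)/4$, and negated middle coefficient the odd integer $(pq+1)/2$, resp. $(p+q)/2$), and show that --- although which of the prime ideals above $p$ and above $q$ is narrowly principal is governed by $\left(\frac{q}{p}\right)$ --- this Legendre-symbol dependence cancels in the \emph{difference} $\kappa(pq)-\kappa^{+}(pq)$, leaving the constant $2$; then Theorem~\ref{pq} finishes the proof. (Equivalently, the comparison $\kappa(pq)\equiv\kappa^{+}(pq)+2\ \mathrm{(mod\ 4)}$ is presumably a special case of the general caliber versus $m$-caliber relation underlying the other theorems on $\kappa$ in this paper, in which case the statement is immediate from Theorem~\ref{pq}.)
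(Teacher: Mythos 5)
Your strategy is to deduce Theorem~\ref{calpq} from Theorem~\ref{pq} via the comparison $\kappa(pq)\equiv\kappa^{+}(pq)+2\ \mathrm{(mod\ 4)}$, and this has two genuine problems. First, it is circular relative to the paper: the paper proves Theorem~\ref{pq} \emph{by citing} Theorem~\ref{calpq} (its proof ends ``from Lemma~\ref{number} and Theorem~\ref{calpq}, we have $S(\alpha)\equiv 1-\left(\frac{q}{p}\right)$''), so Theorem~\ref{calpq} must be established independently, and you would also need your own proof of Theorem~\ref{pq}, which you do not supply. Second, and independently of that, the comparison itself is not proved. Your reduction of $\kappa(pq)-\kappa^{+}(pq)$ to the contribution of the unique ambiguous cycle is sound (no fixed points of $w\mapsto -1/w'$ since $pq$ is not a sum of two squares, $h(pq)$ odd, swapped pairs and matched palindrome pairs contributing controlled multiples of $2$), but the final evaluation $(1-a_{j_1})+(1-a_{j_2})+2E\equiv 2\ \mathrm{(mod\ 4)}$ --- which you yourself flag as ``the main obstacle'' --- is exactly the arithmetic content of the theorem, and the assertion that the dependence on $\left(\frac{q}{p}\right)$ ``cancels in the difference'' is stated, not demonstrated. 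The parenthetical fallback (that the comparison is ``presumably'' a general caliber versus $m$-caliber relation available elsewhere) does not correspond to anything in the paper. As it stands the proposal is a plausible reduction, not a proof.

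For contrast, the paper's argument is direct and is where the Legendre symbol actually enters. It likewise reduces to the principal cycle $\alpha=\overline{[a_0,a_1,\dots,a_n,a_{n+1},a_n,\dots,a_1]}$ (non-ambiguous classes pair off under $w\mapsto -1/w'$ with equal even lengths, so contribute $0$ modulo $4$), but then computes $l(\alpha)=2n+2$ modulo $4$ by a norm-chain argument: the second ambiguous form, of leading coefficient $p$, yields a reduced $\beta$ in the same class, and passing from $[1,\alpha]$ to $p[1,\beta]$ through the $n+1$ intermediate reduced numbers $\omega_0,\dots,\omega_n$ produces an element of $\Z\left[\frac{1+\sqrt{pq}}{2}\right]$ of norm $\pm p$. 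Each $\omega_i$ has negative norm, so the sign determines the parity of $n$; and the sign is determined by which of $x^2-pqy^2=\pm 4p$ is solvable, i.e.\ by $\left(\frac{\mp q}{p}\right)=1$, using $p\equiv 3\ \mathrm{(mod\ 4)}$. If you want to complete your route instead, you must both carry out the central evaluation in your palindrome decomposition and give a proof of Theorem~\ref{pq} that does not pass through Theorem~\ref{calpq}.
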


\begin{thm*}{\ref{cal4pq}}
Let $p$ and $q$ ($p<q$) be prime numbers such that $p \equiv q \equiv 3$ (mod 4). Then we have 
\[
\kappa(4pq) \equiv 1+\left(\frac{q}{p}\right)\ \mathrm{(mod\ 4)}.
\]
\end{thm*}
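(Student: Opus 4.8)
The plan is to read off the value of $\kappa(4pq)\bmod 4$ from the action of the period-reversal involution on $\mathcal{Q}(4pq)$, and then to match the answer with that of Theorem~\ref{calpq} — in other words, to prove the stronger statement $\kappa(4pq)\equiv\kappa(pq)\pmod 4$.

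First I would introduce the involution $\tau\colon\mathcal{Q}(4pq)\to\mathcal{Q}(4pq)$, $\tau(w)=-1/w'$. One checks that $\tau(w)$ is again reduced of discriminant $4pq$, that $\tau^2=\id$, and that $\tau$ reverses the period of the continued fraction of $w$; hence $\tau$ permutes the CF cycles and fixes a cycle setwise exactly when the corresponding $\mathrm{GL}_2(\Z)$-class is ambiguous. A fixed point $w=\tau(w)$ satisfies $ww'=-1$, so its form is $(a,b,-a)$ with $b^2+4a^2=4pq$; then $b$ is even and $pq=(b/2)^2+a^2$ would be a sum of two squares, which is impossible since the prime $p\equiv 3\pmod 4$ divides $pq$ to the first power. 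Therefore $\tau$ is fixed-point free, $\kappa(4pq)=2\,\#(\mathcal{Q}(4pq)/\tau)$, and in particular $\kappa(4pq)$ is even, consistent with $1+\left(\frac{q}{p}\right)\in\{0,2\}$; the same remark applies verbatim to $\mathcal{Q}(pq)$.

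Next I would determine $\#(\mathcal{Q}(4pq)/\tau)\bmod 2$. Writing $T$ for the CF shift one has $\tau T=T^{-1}\tau$, so $\langle T,\tau\rangle$ acts dihedrally on each cycle, respectively on each reversal-pair of cycles. Counting $\tau$-orbits cycle by cycle — using that $\tau$ has no fixed points, so every ambiguous cycle has even length $\ell(C)$ and contributes $\ell(C)/2$ orbits while a reversal-pair contributes $\ell(C)$ orbits — gives
\[
\kappa(4pq)\equiv 2\Big(\#\{\text{ambiguous }C:\ \ell(C)\equiv 2\ (\mathrm{mod}\ 4)\}+\#\{\text{reversal-pairs }\{C,\tau C\}:\ \ell(C)\ \text{odd}\}\Big)\pmod 4,
\]
and similarly for $\kappa(pq)$; it thus suffices to show the two bracketed counts have the same parity for $4pq$ as for $pq$.

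The remaining step, which I expect to be the main obstacle, is to control these counts through the arithmetic of the order of discriminant $4pq$, which has conductor $2$ over the maximal order $\mathcal{O}_K$ of $K=\Q(\sqrt{pq})$ and satisfies $N(\varepsilon_{4pq})=+1$ because $p\mid 4pq$. Rather than computing period-length parities directly, I would set up a length-preserving correspondence between the non-ambiguous cycles of discriminant $4pq$ and those of discriminant $pq$ coming from the surjection of form class groups, together with a direct match of the ambiguous cycles on the two sides; the discrepancy is then a bounded, explicit quantity governed by whether $pq\equiv 1$ or $5\pmod 8$ and by whether $[\mathcal{O}_K^\times:\Z[\sqrt{pq}]^\times]$ equals $1$ or $3$ (exactly the dichotomies already handled in Theorems~\ref{pand4p} and~\ref{cal8p}), which one must check vanishes modulo $4$. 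Combined with Theorem~\ref{calpq}, whose right-hand side already encodes the dependence on $\left(\frac{q}{p}\right)$ via quadratic reciprocity and genus theory for the discriminant $pq$, this yields $\kappa(4pq)\equiv 1+\left(\frac{q}{p}\right)\pmod 4$. An alternative is to compare with Theorem~\ref{mcal4pq}: since $\kappa^+(4pq)\equiv 2\pmod 4$, it would be enough to prove $\kappa(4pq)-\kappa^+(4pq)\equiv\left(\frac{q}{p}\right)-1\pmod 4$, reducing the problem to the difference between the regular and minus continued-fraction cycle structures on $\mathcal{Q}(4pq)$ and $\mathcal{Q}^+(4pq)$.
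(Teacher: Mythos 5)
Your opening reductions are sound and essentially match what the paper does implicitly: the involution $w\mapsto -1/w'$ is fixed-point free on $\mathcal{Q}(4pq)$ because $pq$ is not a sum of two squares, non-ambiguous cycles pair up with equal (even, since $N(\varepsilon_{4pq})=1$) lengths, and hence $\kappa(4pq)\equiv\sum_{\text{ambiguous }C}\ell(C)\pmod 4$ with each ambiguous $\ell(C)$ even. But the step you yourself flag as "the main obstacle" is exactly the content of the theorem, and your plan for it does not work as stated. The paper does \emph{not} transfer the problem to discriminant $pq$; it computes the ambiguous cycle length directly. Concretely: the principal cycle is the palindrome $\alpha=\overline{[a_0,a_1,\dots,a_n,a_{n+1},a_n,\dots,a_1]}$ of length $2(n+1)$, and the reduced number $\beta$ attached to the form with leading coefficient $p$ (a root of $px^2-2p\lfloor\sqrt{q/p}\rfloor x-p\lfloor\sqrt{q/p}\rfloor^2-q=0$) sits at the second center of symmetry, i.e.\ exactly $n+1$ shifts from $\alpha$. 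Tracking the module $[1,\alpha]\to[1,\beta]$ in $\Z[\sqrt{pq}]$ shows the connecting element has norm $(-1)^{n+1}p$, so the parity of $n+1$ — hence $\ell(\alpha)\bmod 4$ — is decided by whether $x^2-pqy^2=p$ or $=-p$ is solvable, which via $\left(\frac{-1}{p}\right)=-1$ is decided by $\left(\frac{q}{p}\right)$. That is the arithmetic input your sketch never supplies.

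Your proposed substitute — a length-preserving correspondence between cycles of discriminants $4pq$ and $pq$ with a "bounded explicit discrepancy" that "one must check vanishes modulo $4$" — is not constructed, and there is no reason offered that the discrepancy vanishes; Theorem \ref{pand4p} shows that precisely this kind of conductor-$2$ comparison can shift $\kappa$ by $2$ modulo $4$ (it does so whenever $p\equiv 1\pmod 8$), so the vanishing cannot be assumed. The alternative route via Theorem \ref{mcal4pq} has the same defect: relating $\kappa(4pq)-\kappa^+(4pq)$ to $\left(\frac{q}{p}\right)$ is again equivalent to locating the half-period parity of the ambiguous cycle, which is the missing computation. You also never pin down how many ambiguous classes there are and why only the principal one matters; the paper handles this by exhibiting the representatives explicitly via Lemma \ref{equivalent} and showing $\alpha$ and $\beta$ lie on the same cycle. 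As it stands the proposal reduces the theorem to an unproved claim of the same difficulty.
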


\begin{thm*}{\ref{4pq}}
Let $p$ and $q$ be prime numbers such that $p \equiv 1$ (mod 4) and $q \equiv 3$ (mod 4). Then we have 
\[
\kappa(4pq) \equiv 0\ \mathrm{(mod\ 4)}.
\]
\end{thm*}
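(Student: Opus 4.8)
Write $D=4pq$. The plan is to realize $\kappa(D)$ as a sum of continued fraction period lengths over $\mathrm{GL}_2(\Z)$-classes, to remove modulo $4$ every contribution coming from a non-ambiguous class by a symmetry argument, and to be left with a comparison of the periods of the two ambiguous classes. On $\mathcal Q(D)$ let $\rho\colon w\mapsto 1/(w-\lfloor w\rfloor)$ be the right neighbour map, a linear fractional transformation of determinant $-1$ whose orbits are precisely the $\mathrm{GL}_2(\Z)$-classes, so that $\kappa(D)=\sum_{C\in\mathcal R(D)}\ell(C)$ with $\ell(C)$ the period length of $C$. Let $\iota\colon w\mapsto -1/w'$ be the reciprocity involution of $\mathcal Q(D)$; it satisfies $\iota\rho\iota=\rho^{-1}$, and $\iota w=w$ holds exactly when the defining form of $w$ is $(a,b,-a)$, that is, when $D=b^{2}+4a^{2}$. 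As $q\equiv 3 \pmod 4$ divides $D$, such a representation would give $pq=(b/2)^{2}+a^{2}$, which is impossible because $q$ divides $pq$ to an odd power. Hence $\iota$ is fixed-point free, so $\kappa(D)$ is even.

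Since the odd prime $q\equiv 3 \pmod 4$ divides $D$, we have $N(\varepsilon_{D})=1$. Going once around the period of a class $C$ realizes the automorph attached to $\varepsilon_{D}$, whose determinant equals $N(\varepsilon_{D})=(-1)^{\ell(C)}$, so $\ell(C)$ is even for every $C$. The discriminant factors into $t=3$ prime discriminants, $4pq=(-4)\cdot p\cdot(-q)$, so the narrow class group $\mathcal C^{+}(D)$ has $2^{t-1}=4$ ambiguous (order $\le 2$) classes; the kernel of the natural map $\mathcal C^{+}(D)\to\mathcal C(D)$ onto the wide class group is $\langle k_{0}\rangle$ with $k_{0}=[(\sqrt{pq})]$, and $k_{0}$ has order $2$ because $N(\varepsilon_{D})=1$. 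Evaluating the genus characters $\chi_{p}$ and $\chi_{-q}$ (attached to the prime discriminant divisors $p$ and $-q$) on $k_{0}$, represented by $-x^{2}+pq\,y^{2}$, gives $\chi_{p}(k_{0})=\left(\frac{-1}{p}\right)=1$ and $\chi_{-q}(k_{0})=\left(\frac{-1}{q}\right)=-1$; thus $k_{0}$ is not in the principal genus and a fortiori is not a square in $\mathcal C^{+}(D)$. It follows that $\mathcal C(D)$ has $2$-rank $1$, so $\mathcal R(D)$ contains exactly two ambiguous classes: the principal class $C_{1}$, whose period equals that of the regular continued fraction of $\sqrt{pq}$, and a unique non-principal ambiguous class $C_{2}$.

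Let $\mathcal S$ be the set of $\mathrm{SL}_2(\Z)$-orbits inside $\mathcal Q(D)$; since $N(\varepsilon_{D})=1$, each class $C\in\mathcal R(D)$ splits into two such orbits of common size $\ell(C)/2$, so $\mathcal S$ is identified with $\mathcal C^{+}(D)$ and $|\mathcal S|=h^{+}(D)$. Now $\iota$ and $\rho$ preserve the cardinality of an $\mathrm{SL}_2$-orbit and induce on $\mathcal S$ the commuting involutions $\overline{\iota}\colon\mathfrak a\mapsto\mathfrak a^{-1}k_{0}$ and $\overline{\rho}\colon\mathfrak a\mapsto\mathfrak a\,k_{0}$. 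Because $k_{0}\ne 1$ and $k_{0}$ is not a square, $\overline{\rho}$ and $\overline{\iota}$ are fixed-point free, whereas $\overline{\iota}\,\overline{\rho}$ is the inversion, whose fixed set is the four-element group of ambiguous classes; under $G=\langle\overline{\iota},\overline{\rho}\rangle\cong(\Z/2)^{2}$ these four classes split into two orbits of size $2$, one lying over $C_{1}$ and one over $C_{2}$. Grouping $\kappa(D)=\sum_{\mathcal O\in\mathcal S}|\mathcal O|$ by $G$-orbits, the free orbits contribute multiples of $4$, there are no $G$-fixed points, and only the two $G$-orbits of size $2$ survive modulo $4$, giving
\[
\kappa(D)\ \equiv\ 2\cdot\tfrac{\ell(C_{1})}{2}+2\cdot\tfrac{\ell(C_{2})}{2}\ =\ \ell(C_{1})+\ell(C_{2}) \pmod 4.
\]
Since $\ell(C_{1})$ and $\ell(C_{2})$ are both even, the theorem is equivalent to $\ell(C_{1})\equiv\ell(C_{2}) \pmod 4$.

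The comparison of the two ambiguous periods is the heart of the matter, and the step I expect to fight hardest with. Each of the two cycles is symmetric under $\iota$, which acts on it as a fixed-point free reflection of a cyclic word of even length $\ell(C_{i})$, so $\ell(C_{i})/2$ counts the reduced forms strictly on one side of the symmetry axis. The idea is to connect the two axes through the middle of the $\sqrt{pq}$-period: at the midpoint one meets an ambiguous reduced form whose defining integer divides $2pq$, hence a form lying in $C_{1}$ or in $C_{2}$, and reading the reduced forms of $C_{1}$ outward from this central form, composing with the appropriate ambiguous ideal and re-reducing, should give a correspondence with the reduced forms of $C_{2}$ whose length defect is controlled entirely by representations of $\pm 1$, $\pm p$, $\pm q$ and of $pq$ as a sum of two squares by the forms $(1,0,-pq)$ and $(p,0,-q)$. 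Exactly the non-representability used in the first paragraph then forces, under $p\equiv 1$ and $q\equiv 3 \pmod 4$, that this defect be $\equiv 0 \pmod 4$, i.e.\ that the two half-periods have the same parity; carrying out this re-reduction bookkeeping uniformly in $p$ and $q$ is the remaining work.
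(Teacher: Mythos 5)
Your reduction is essentially sound and, despite the heavier group-theoretic packaging, lands exactly where the paper's proof begins: since $q\equiv 3\pmod 4$ forces $N(\varepsilon_{4pq})=1$, every period length is even, the non-ambiguous classes pair off under $w\mapsto -1/w'$ into groups contributing $0\pmod 4$, there are exactly two ambiguous wide classes $C_1$ (containing the forms with leading coefficients $1$ and $2$) and $C_2$ (leading coefficients $p$ and $2p$), and the theorem reduces to $\ell(C_1)+\ell(C_2)\equiv 0\pmod 4$, i.e.\ to the two half-periods having the same parity. (This reduction can be had more cheaply: $\iota$ pairs the non-ambiguous wide classes into couples of equal even length, so genus theory is only needed to count the ambiguous classes.) But that final comparison is the entire content of the theorem, and your last paragraph does not prove it: you explicitly leave the ``re-reduction bookkeeping'' as ``the remaining work,'' and the mechanism you sketch --- a ``length defect'' controlled by representations of $\pm1,\pm p,\pm q,pq$ ``as a sum of two squares by the forms $(1,0,-pq)$ and $(p,0,-q)$'' --- does not make sense as stated, since these forms are indefinite, and the non-representability of $pq$ as a sum of two squares invoked from your first paragraph only shows that $\iota$ has no fixed elements; it says nothing about the relative lengths of the two ambiguous cycles. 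So the proposal has a genuine gap at its decisive step.

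For comparison, here is how the paper closes it. Write $\ell(C_1)=2n+2$ and $\ell(C_2)=2m+2$, reflecting the palindromic periods $\overline{[a_0,a_1,\dots,a_n,a_{n+1},a_n,\dots,a_1]}$. Traversing half the period of $C_1$ carries the ideal $[1,\alpha]$ (norm $1$) to $[1,\beta]$ (norm $\tfrac12$), so the product $2\,\omega_n^{-1}\cdots\omega_0^{-1}$ is an element $x+y\sqrt{pq}$ of norm $\pm 2$; since each $\omega_i$ has negative norm, the sign is $(-1)^{n+1}$. The identical computation for $C_2$ (from $[1,\gamma]$ of norm $\tfrac1p$ to $[1,\delta]$ of norm $\tfrac1{2p}$) again produces an element of norm $\pm 2$, with sign $(-1)^{m+1}$. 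Finally, $x^2-pqy^2=2$ forces $\left(\tfrac{2}{q}\right)=1$, i.e.\ $q\equiv 7\pmod 8$, while $x^2-pqy^2=-2$ forces $q\equiv 3\pmod 8$; only one sign can therefore occur for a given $q$, whence $(-1)^{n+1}=(-1)^{m+1}$, $n\equiv m\pmod 2$, and $\ell(C_1)+\ell(C_2)=2(n+m)+4\equiv 0\pmod 4$. This norm-of-the-half-period argument is the lemma your write-up is missing; everything before it can stand.
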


\section*{ \S 2. Preliminaries}
 \setcounter{section}{2}
 \setcounter{subsection}{0}
Suppose the continued fraction expansion of $\alpha$ and the minus continued fraction expansion of $\beta$ are both purely periodic: 
\[
\alpha=\overline{[a_1,a_2,...,a_n]}=a_1+\cfrac{1}{a_2+\cfrac{1}{\ddots+\cfrac{1}{a_n+\cfrac{1}{a_1+\cfrac{1}{\ddots}}}}}
\]
and
\[
\beta=\overline{[[b_1,b_2,...,b_m]]}=b_1-\cfrac{1}{b_2-\cfrac{1}{\ddots-\cfrac{1}{b_m-\cfrac{1}{b_1-\cfrac{1}{\ddots}}}}}.
\]
We define $l(\alpha)\coloneqq n$ and $l^+(\beta)\coloneqq m$ to be their minimum period lengths and 
\[
S(\alpha) \coloneqq \sum_{i=1}^n a_i \ \ \ \mathrm{and} \ \ \ S^+(\beta) \coloneqq \sharp \{j \mid 1 \leq j \leq m, \ b_j \geq 3\}.
\]
We need several propositions and lemmas to prove the theorems. For proofs, we refer the reader to \cite{cox} and \cite{mori} except for Lemma \ref{equivalent}
\begin{prop}[\cite{mori}, Proposition 2.1]\label{equation}
We have 
\[
\kappa(D) = \sum_{[\alpha] \in \mathcal{R}(D)}l(\alpha) \ \ \ \mathrm{and} \ \ \ \kappa^+(D)=\sum_{[\alpha] \in \mathcal{R}(D)}S(\alpha).
\]
\end{prop}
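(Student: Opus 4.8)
The plan is to deduce both identities from one principle: the reduction map attached to each continued-fraction algorithm permutes the relevant finite set, its orbits are exactly the equivalence classes, and the orbit through $\alpha$ has cardinality equal to the corresponding period length. For the first identity, the map $T\colon w\mapsto 1/(w-\lfloor w\rfloor)$, sending a reduced number to the next complete quotient of its ordinary continued fraction, is a bijection of $\mathcal{Q}(D)$ onto itself: it preserves the discriminant and reducedness, and it is surjective since $\overline{[a_1,\dots,a_n]}=T\overline{[a_n,a_1,\dots,a_{n-1}]}$. Two reduced numbers lie in the same $T$-orbit iff their periods are cyclic permutations of one another, i.e.\ iff they are $\mathrm{GL}_2(\Z)$-equivalent, so the $T$-orbits are exactly the classes of $\mathcal{R}(D)$; and the orbit of $\alpha=\overline{[a_1,\dots,a_n]}$ consists of the $n$ complete quotients $\overline{[a_i,\dots,a_n,a_1,\dots,a_{i-1}]}$ ($1\le i\le n$), which are pairwise distinct because $n=l(\alpha)$ is the minimal period. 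Summing orbit sizes gives $\kappa(D)=\#\mathcal{Q}(D)=\sum_{[\alpha]\in\mathcal{R}(D)}l(\alpha)$.

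Running the same argument for the minus continued fraction, with reduction map $\beta\mapsto 1/(\lceil\beta\rceil-\beta)$ on $\mathcal{Q}^+(D)$ whose orbits are the $\mathrm{SL}_2(\Z)$-classes, yields $\kappa^+(D)=\sum_{[\beta]\in\mathcal{R}^+(D)}l^+(\beta)$, so it remains to prove $\sum_{[\beta]\in\mathcal{R}^+(D)}l^+(\beta)=\sum_{[\alpha]\in\mathcal{R}(D)}S(\alpha)$. For this I would use the Hirzebruch--Jung (``singularization'') dictionary between the two algorithms: for a reduced $\alpha=\overline{[a_1,a_2,a_3,\dots]}$ one has
\[
\alpha=[[\,a_1+1,\ \underbrace{2,\dots,2}_{a_2-1},\ a_3+2,\ \underbrace{2,\dots,2}_{a_4-1},\ a_5+2,\ \dots\,]],
\]
which after the first entry is periodic, its periodic part being an $m$-reduced number $\mathrm{SL}_2(\Z)$-equivalent to $\alpha$. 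Reading the length off this pattern, one period of this minus expansion has $a_2+a_4+\dots+a_n$ entries when $n=l(\alpha)$ is even; performing the same computation starting from the complete quotient $T\alpha$ gives instead the complementary sum $a_1+a_3+\dots+a_{n-1}$; and when $n$ is odd a single period already contains $\sum_{i=1}^n a_i=S(\alpha)$ entries.

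The step I expect to be the main obstacle is turning these counts into the exact equality, class by class, via the natural surjection $\mathcal{R}^+(D)\to\mathcal{R}(D)$ that sends the $\mathrm{SL}_2(\Z)$-class of an $m$-reduced $\beta$ to the $\mathrm{GL}_2(\Z)$-class it generates: one must show that over each $[\alpha]$ the minus-period lengths of the classes in the fibre sum to $S(\alpha)$. If $N(\varepsilon_D)=-1$, then $l(\alpha)$ is odd for every reduced $\alpha$ and $\#\mathcal{R}^+(D)=\#\mathcal{R}(D)$, so the fibre over $[\alpha]$ is a single class, of length $S(\alpha)$ by the odd case above. If $N(\varepsilon_D)=+1$, then $l(\alpha)$ is even and $\#\mathcal{R}^+(D)=2\#\mathcal{R}(D)$, so the fibre over $[\alpha]$ has two classes; these are the cycles produced by singularization from $\alpha$ and from $T\alpha$ (the cycles obtained from $\alpha$ and from $T^2\alpha$ coincide, since $T^2$ is realised by an $\mathrm{SL}_2(\Z)$ matrix, whereas those from $\alpha$ and $T\alpha$ are distinct because the fibre has two elements), and their lengths are the two complementary alternating sums, which again add up to $S(\alpha)$. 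Here I use the classical facts that $(-1)^{l(\alpha)}=N(\varepsilon_D)$ for every reduced $\alpha$ of discriminant $D$, and that $\#\mathcal{R}^+(D)/\#\mathcal{R}(D)$ equals $1$ or $2$ according as $N(\varepsilon_D)$ is $-1$ or $+1$. Granting this, $\kappa^+(D)=\sum_{[\beta]\in\mathcal{R}^+(D)}l^+(\beta)=\sum_{[\alpha]\in\mathcal{R}(D)}\bigl(\sum_{[\beta]\mapsto[\alpha]}l^+(\beta)\bigr)=\sum_{[\alpha]\in\mathcal{R}(D)}S(\alpha)$.
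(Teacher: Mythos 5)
The paper does not prove this proposition at all: it is quoted verbatim from Kaneko--Mori (\cite{mori}, Proposition 2.1), and the text explicitly refers the reader there for the proof. So there is no in-paper argument to compare against, and your proposal has to be judged on its own. Your first identity is correct and is the standard orbit count for the Gauss map on $\mathcal{Q}(D)$; likewise the reduction of $\kappa^+(D)$ to $\sum_{[\beta]\in\mathcal{R}^+(D)}l^+(\beta)$, the singularization formula, and the identification of the fibre of $\mathcal{R}^+(D)\to\mathcal{R}(D)$ over $[\alpha]$ with $\{[\alpha]_\approx\}$ or $\{[\alpha]_\approx,[T\alpha]_\approx\}$ according to the sign of $N(\varepsilon_D)$ are all sound, using only the classical facts you name.

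The one step you gloss over, and the only genuine gap, is that the cycle you read off from the singularization need not a priori be the \emph{minimal} minus-period: $l^+$ is defined as the minimum period length, so you must rule out that the word of length $a_2+a_4+\dots+a_n$ (resp.\ $S(\alpha)$ in the odd case) is a $k$-fold repetition of a shorter cycle with $k>1$. Minimality of $n=l(\alpha)$ does not formally transfer. The fix is the usual unit argument: the matrix $\prod_i\left(\begin{smallmatrix}c_i&-1\\1&0\end{smallmatrix}\right)$ over the written minus-cycle represents the same automorph as $\prod_i\left(\begin{smallmatrix}a_i&1\\1&0\end{smallmatrix}\right)$ over one (or two) ordinary periods, hence corresponds to the fundamental totally positive unit of the order of discriminant $D$; if the written cycle were a $k$-th power, that unit would be a $k$-th power of a smaller totally positive unit $>1$, which is impossible. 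With that sentence added, your argument is complete.
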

\begin{lem}[\cite{mori}, Lemma 2.2]\label{number}
Let
\[
E=
\left(
\begin{matrix}
0&1\\
1&0
\end{matrix}
\right)
\ \ \ \mathrm{and} \ \ \ O=
\left(
\begin{matrix}
1&1\\
1&0
\end{matrix}
\right)
\]
be elements in $\mathrm{GL}_2(\mathbb{F}_2)$. Consider the product
\[
M=M_1...M_n
\]
of length $n$ of $k$ $O$'s  and $(n-k)$ $E$'s. Then we have 
\[
n \equiv k \ \mathrm{(mod\ 2)} \LRarrow M \in \left\{
I=\left(
\begin{matrix}
1&0\\
0&1
\end{matrix}
\right), 
O=
\left(
\begin{matrix}
1&1\\
1&0
\end{matrix}
\right),
O^2=
\left(
\begin{matrix}
0&1\\
1&1
\end{matrix}
\right)
\right\}.
\]
\end{lem}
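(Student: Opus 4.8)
The plan is to recognize $\{I,O,O^2\}$ as the kernel of the sign character of $\mathrm{GL}_2(\mathbb{F}_2)$ and thereby turn the matrix membership question into a parity count. First I would record the elementary computations: $E^2=I$, and $O^2=\left(\begin{smallmatrix}0&1\\1&1\end{smallmatrix}\right)$, $O^3=I$, so that $\{I,O,O^2\}=\langle O\rangle$ is precisely the cyclic subgroup of order $3$ generated by $O$. I would also note the order count $|\mathrm{GL}_2(\mathbb{F}_2)|=(2^2-1)(2^2-2)=6$; in fact $\mathrm{GL}_2(\mathbb{F}_2)\cong S_3$, though for the argument I only need the order and a sign homomorphism.

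Next I would construct the sign character. The group $\mathrm{GL}_2(\mathbb{F}_2)$ acts by permutations on the three nonzero vectors $v_1=\left(\begin{smallmatrix}1\\0\end{smallmatrix}\right)$, $v_2=\left(\begin{smallmatrix}0\\1\end{smallmatrix}\right)$, $v_3=\left(\begin{smallmatrix}1\\1\end{smallmatrix}\right)$ of $\mathbb{F}_2^2$, giving a homomorphism into $S_3$; composing with the signature yields a homomorphism $\chi\colon\mathrm{GL}_2(\mathbb{F}_2)\to\{\pm1\}$. A direct check of the images then finishes the setup: $E$ swaps $v_1\leftrightarrow v_2$ and fixes $v_3$, so $E$ acts as a transposition and $\chi(E)=-1$; whereas $O$ sends $v_1\mapsto v_3\mapsto v_2\mapsto v_1$, so $O$ acts as a $3$-cycle and $\chi(O)=+1$.

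The key structural step is to identify $\ker\chi$. Since $\chi(O)=+1$ we get $\{I,O,O^2\}=\langle O\rangle\subseteq\ker\chi$, and since $\chi(E)=-1$ the character $\chi$ is surjective, so $\ker\chi$ has index $2$ and hence order $6/2=3$. Comparing cardinalities forces $\ker\chi=\{I,O,O^2\}$ exactly. Finally, for $M=M_1\cdots M_n$ with $k$ factors equal to $O$ and $n-k$ equal to $E$, multiplicativity of $\chi$ gives $\chi(M)=\chi(O)^k\,\chi(E)^{n-k}=(-1)^{n-k}$, so $M\in\{I,O,O^2\}=\ker\chi$ holds if and only if $(-1)^{n-k}=1$, i.e. if and only if $n-k$ is even, which is the asserted equivalence $n\equiv k\pmod 2$.

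I do not expect a genuine obstacle here: the entire content is that $O$ is even and $E$ is odd, so the only points requiring care are writing down a legitimate sign homomorphism $\chi$ (for which the permutation-action description is the cleanest, since it is automatically a homomorphism) and verifying the two values $\chi(E)=-1$, $\chi(O)=+1$. Once those are in hand, the index-$2$ counting argument pins the kernel down and the parity conclusion is immediate.
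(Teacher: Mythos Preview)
Your argument is correct: identifying $\{I,O,O^2\}=\langle O\rangle$ with the kernel of the sign character on $\mathrm{GL}_2(\mathbb{F}_2)\cong S_3$, and then reading off $\chi(M)=(-1)^{n-k}$, gives exactly the equivalence stated. The computations $\chi(E)=-1$ and $\chi(O)=+1$ via the permutation action on nonzero vectors are accurate.

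As for comparison: the paper itself does not supply a proof of this lemma; it is quoted from \cite{mori} and the reader is referred there. So there is no in-paper argument to compare against. Your approach is in any case the natural one---the lemma amounts precisely to the statement that $E$ is odd and $O$ is even in $S_3$, and any proof will be a variant of this observation (one could alternatively use the determinant-like map $\mathrm{GL}_2(\mathbb{F}_2)\to\mathbb{F}_4^\times/\mathbb{F}_2^\times$ or simply list all six elements, but the sign character you chose is the cleanest).
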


\begin{lem}[\cite{mori}, Lemma 2.3]\label{parity}
Let $\alpha \in \mathcal{Q}(D)$.
\begin{enumerate}
\setlength{\parskip}{-18pt}
\setlength{\itemsep}{18pt}
\item If $D$ is odd, then we have $l(\alpha) \equiv S(\alpha)$ (mod 2).
\item If $D$ is even and $N(\varepsilon_D)=-1$, then $S(\alpha)$ is even. 
\end{enumerate}
\end{lem}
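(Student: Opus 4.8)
The plan is to encode one period of the continued fraction of $\alpha$ as an integral matrix, reduce it modulo $2$, and extract both congruences from Lemma \ref{number}. Writing $\alpha=\overline{[a_1,\dots,a_n]}$ with $n=l(\alpha)$, the fixed-point relation for the period gives
\[
M:=\prod_{i=1}^{n}\begin{pmatrix}a_i&1\\1&0\end{pmatrix}=\begin{pmatrix}p&r\\q&s\end{pmatrix},\qquad q\alpha^2+(s-p)\alpha-r=0 .
\]
Since $\begin{pmatrix}a_i&1\\1&0\end{pmatrix}$ reduces mod $2$ to $O$ when $a_i$ is odd and to $E$ when $a_i$ is even, the image of $M$ in $\mathrm{GL}_2(\mathbb{F}_2)$ is a product of length $n$ containing exactly $k:=\#\{i:a_i\text{ odd}\}$ copies of $O$. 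As $S(\alpha)=\sum a_i\equiv k\pmod 2$, Lemma \ref{number} yields the pivotal equivalence
\[
l(\alpha)\equiv S(\alpha)\pmod 2\ \LRarrow\ M\bmod 2\in\{I,O,O^2\}.
\]
Everything therefore reduces to locating $M\bmod 2$ inside $\mathrm{GL}_2(\mathbb{F}_2)$.

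First I would identify $M$ with the fundamental automorph of $\alpha$, so that its eigenvalue is $\varepsilon_D$; this gives $\Tr M=t_D$, and $\det M=N(\varepsilon_D)$, the latter also equalling $(-1)^{l(\alpha)}$ since each factor has determinant $-1$. Computing the discriminant of the form attached to $M$,
\[
(s-p)^2+4qr=(\Tr M)^2-4\det M=t_D^2-4N(\varepsilon_D)=Du_D^2,
\]
shows that $qx^2+(s-p)x-r$ is $u_D$ times a primitive form of discriminant $D$, whence $\gcd(q,s-p,r)=u_D$. Thus $M\bmod 2$ is controlled by exactly two parities: the trace $t_D$ and the content $u_D$. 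Concretely, if $u_D$ is even then $q,r,s-p$ are all even, and since $\det M$ is odd the diagonal entries $p,s$ must be odd, forcing $M\equiv I\pmod 2$; whereas if $u_D$ is odd then at least one of $q,r,s-p$ is odd.

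For (i), with $D$ odd the relation $t_D^2-Du_D^2=\pm4$ gives $t_D\equiv u_D\pmod 2$: if both are odd then $\Tr M=t_D$ is odd, and the only trace-$1$ elements of $\mathrm{GL}_2(\mathbb{F}_2)$ are $O,O^2$, while if both are even the content computation forces $M\equiv I$; either way $M\bmod 2\in\{I,O,O^2\}$ and $l(\alpha)\equiv S(\alpha)$. For (ii), $N(\varepsilon_D)=-1$ gives $\det M=-1$, so $l(\alpha)$ is odd; moreover $D\equiv 0\pmod 4$, and writing $D=4D'$, $t_D=2a$ in $t_D^2-Du_D^2=-4$ yields $a^2-D'u_D^2=-1$, whose reduction mod $4$ forbids even $u_D$ (it would force $a^2\equiv 3$). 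Hence $t_D$ is even and $u_D$ odd, so by the dictionary above $M\bmod 2$ has trace $0$ but is not $I$; it is therefore one of the three nontrivial involutions and lies outside $\{I,O,O^2\}$. Thus $l(\alpha)\not\equiv S(\alpha)\pmod 2$, and since $l(\alpha)$ is odd, $S(\alpha)$ is even.

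The main obstacle is the dictionary of the second paragraph: proving that one full period of the continued fraction realizes exactly the fundamental unit $\varepsilon_D$, uniformly over all classes, so that $\Tr M=t_D$, $\det M=N(\varepsilon_D)$, and $\gcd(q,s-p,r)=u_D$. Once this identification is secured, the remaining work is purely a matter of tracking parities modulo $2$ together with the single congruence modulo $4$ that pins down the parity of $u_D$ in part (ii).
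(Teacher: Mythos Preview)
The paper does not prove Lemma~\ref{parity} itself; the sentence preceding Proposition~\ref{equation} refers the reader to \cite{mori} for its proof. Your argument is correct and is almost certainly the one in \cite{mori}, since it rests precisely on Lemma~\ref{number} (also quoted from \cite{mori} and visibly tailored to this application): reducing the period matrix $M$ modulo~$2$ and using $\Tr M=t_D$, $\det M=N(\varepsilon_D)=(-1)^{l(\alpha)}$, and $\gcd(q,s-p,r)=u_D$ gives both parts via the parity bookkeeping you carry out. The ``main obstacle'' you flag---that one minimal period realizes exactly the fundamental automorph, so that the trace, determinant and content of $M$ are $t_D$, $N(\varepsilon_D)$, $u_D$---is treated throughout the paper as a classical fact of continued fractions (see e.g.\ the parenthetical remark ``$d$ is $u_{8p}$ from a classical fact of continued fractions'' in the proof of Theorem~\ref{8p}), and may simply be cited as such.
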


\begin{prop}[\cite{cox}]\label{classnumber}
Let $D=f^2D_0$ be a discriminant, where $D_0$ is a fumdamental discriminant and $f$ is a positive integer. Let $\mu$ be an integer such that $\varepsilon_D=\varepsilon_{D_0}^{\mu}$. Then, we have
\[
h(D)=\frac{h(D_0)f}{\mu}\prod_{p \mid f}\left(1-\frac{\chi_{D_0}(p)}{p}\right)
\]
where $p$ runs over prime factors of $f$ and $\chi_{D_0}$ is the Kronecker character of $\Q(\sqrt{D_0})$.
\end{prop}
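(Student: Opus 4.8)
The plan is to interpret $h(D)$ as the order of the (wide) ideal class group of the order $\mathcal{O}_D=\Z+f\mathcal{O}_K$ of conductor $f$ in $K:=\Q(\sqrt{D_0})$, where $\mathcal{O}_K=\Z[\frac{D_0+\sqrt{D_0}}{2}]$ is the maximal order, and to read the formula off the standard exact sequence comparing the class groups of $\mathcal{O}_D$ and $\mathcal{O}_K$. Concretely, I would first invoke the exact sequence of finite abelian groups
\[
1 \rightarrow \mathcal{O}_K^{\times}/\mathcal{O}_D^{\times} \rightarrow (\mathcal{O}_K/f\mathcal{O}_K)^{\times}/(\Z/f\Z)^{\times} \rightarrow \mathrm{Pic}(\mathcal{O}_D) \rightarrow \mathrm{Pic}(\mathcal{O}_K) \rightarrow 1,
\]
in which the surjection onto $\mathrm{Pic}(\mathcal{O}_K)$ sends the class of a proper (i.e.\ invertible) $\mathcal{O}_D$-ideal $\mathfrak{a}$ to the class of $\mathfrak{a}\mathcal{O}_K$, and the middle map sends a residue class $\bar{\alpha}$ to the class of $\mathcal{O}_D\cap\alpha\mathcal{O}_K$; the kernel $\mathcal{O}_K^{\times}/\mathcal{O}_D^{\times}$ records exactly which of these ideals are already principal in $\mathcal{O}_D$. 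Taking orders in this four-term sequence gives
\[
h(D) = \frac{h(D_0)}{[\mathcal{O}_K^{\times}:\mathcal{O}_D^{\times}]}\cdot\frac{|(\mathcal{O}_K/f\mathcal{O}_K)^{\times}|}{|(\Z/f\Z)^{\times}|}.
\]

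The second step is purely local arithmetic. I would factor $f=\prod_p p^{e_p}$ and use the Chinese Remainder Theorem together with the splitting type of $p$ in $K$ to evaluate the ratio prime by prime. One finds that $|(\mathcal{O}_K/p^{e_p}\mathcal{O}_K)^{\times}|$ equals $p^{2e_p}(1-1/p)^2$ when $p$ splits, $p^{2e_p}(1-1/p^2)$ when $p$ is inert, and $p^{2e_p}(1-1/p)$ when $p$ ramifies, whereas $|(\Z/p^{e_p}\Z)^{\times}|=p^{e_p}(1-1/p)$ in every case. Dividing, the contribution of each prime is exactly $p^{e_p}(1-\chi_{D_0}(p)/p)$, since $\chi_{D_0}(p)$ equals $+1,-1,0$ in precisely the split, inert, ramified cases. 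Multiplying over $p\mid f$ yields
\[
\frac{|(\mathcal{O}_K/f\mathcal{O}_K)^{\times}|}{|(\Z/f\Z)^{\times}|} = f\prod_{p\mid f}\left(1-\frac{\chi_{D_0}(p)}{p}\right).
\]

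Finally, I would identify the unit index with $\mu$. Since $K$ is real quadratic, we have $\mathcal{O}_K^{\times}=\{\pm1\}\times\langle\varepsilon_{D_0}\rangle$ and $\mathcal{O}_D^{\times}=\{\pm1\}\times\langle\varepsilon_D\rangle$, with the same torsion part, so $[\mathcal{O}_K^{\times}:\mathcal{O}_D^{\times}]$ equals the index of $\langle\varepsilon_D\rangle$ in $\langle\varepsilon_{D_0}\rangle$, which is $\mu$ by the defining relation $\varepsilon_D=\varepsilon_{D_0}^{\mu}$. Substituting this and the displayed ratio into the expression for $h(D)$ gives the asserted formula.

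I expect the main obstacle to be the justification of the exact sequence rather than the elementary order count that follows. The delicate points are that one must restrict to \emph{proper} (invertible) ideals of the non-maximal order $\mathcal{O}_D$ for $\mathrm{Pic}(\mathcal{O}_D)$ to be a group on which extension to $\mathcal{O}_K$ is well behaved, and one must check that the middle homomorphism is well defined with exactly the claimed kernel and image — this is where the conductor $f$ and the local structure of $\mathcal{O}_D$ at primes dividing $f$ genuinely enter. Since this sequence is established in \cite{cox}, I would either cite it directly or reproduce its construction, after which the remaining steps reduce to the local computation above.
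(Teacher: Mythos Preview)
The paper does not give its own proof of this proposition; it states the result with a citation to \cite{cox} and refers the reader there (see the sentence preceding Proposition~\ref{equation}). Your outline via the exact sequence relating $\mathrm{Pic}(\mathcal{O}_D)$ and $\mathrm{Pic}(\mathcal{O}_K)$, the local computation of $|(\mathcal{O}_K/f\mathcal{O}_K)^{\times}|/|(\Z/f\Z)^{\times}|$, and the identification $[\mathcal{O}_K^{\times}:\mathcal{O}_D^{\times}]=\mu$ is precisely the argument in Cox, so your proposal is correct and matches the source the paper relies on.
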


\begin{lem}\label{equivalent}
Suppose $\alpha \in \mathcal{Q}(D)$ satisfies $\alpha \sim -\frac{1}{\alpha'}$. Then the continued fraction of $\alpha$ has the form
\[
\alpha \sim \overline{[c_1,c_2,...,c_{l-1},c_{l},c_{l-1},...,c_2,c_1]}
\]
if $N(\varepsilon_D)=-1$ and 
\[
\alpha \sim \overline{[c_1,...,c_{l},c_{l},...,c_1]}\ \ \  \mathrm{or}\ \  \ \overline{[c_0,c_1,...,c_l,c_{l+1},c_{l},...,c_1]}
\]
if $N(\varepsilon_D)=1$. In particular, let $ax^2+bx+c\ (a>0)$ be the minimal polynomial of $\alpha$. Then, if 
$\alpha=\overline{[c_1,c_2,...,c_{l-1},c_{l},c_{l-1},...,c_2,c_1]}$ or $\overline{[c_1,...,c_{l},c_{l},...,c_1]}$, the equality $a=-c$ holds, and if $\alpha = \overline{[c_0,c_1,...,c_l,c_{l+1},c_{l},...,c_1]}$, $a$ divides $b$. 
 \end{lem}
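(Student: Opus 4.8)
The plan is to translate the hypothesis $\alpha\sim-1/\alpha'$ into a reflective symmetry of the continued-fraction period, classify the periods carrying such a symmetry, and then read off the shape of the minimal polynomial from the resulting normal form.

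First I would use the reversal identity for purely periodic expansions: if $\alpha=\overline{[a_1,\dots,a_n]}\in\mathcal{Q}(D)$, then $-1/\alpha'$ is again reduced of discriminant $D$ and $-1/\alpha'=\overline{[a_n,a_{n-1},\dots,a_1]}$. This is the short computation: writing $\alpha_i=a_i+1/\alpha_{i+1}$ for the purely periodic tails and $\beta_i=-1/\alpha_i'$, one obtains $\beta_{i+1}=a_i+1/\beta_i$, so $\beta_1=-1/\alpha'$ carries the reversed period; it is also recorded in \cite{cox} and \cite{mori}. Since two reduced numbers are $\mathrm{GL}_2(\Z)$-equivalent exactly when their periods are cyclically equivalent, the hypothesis says precisely that the cyclic word $W=(a_1,\dots,a_n)$ coincides with its reversal up to a rotation. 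The rotation realizing this is an involution $\rho$ of $\Z/n$ of the form $i\mapsto r+1-i$; its fixed set $\{\,i : 2i\equiv r+1\,\}$ has cardinality $1$ when $n$ is odd and $2$ or $0$ when $n$ is even. Re-starting the period symmetrically about the fixed set then puts $W$ into exactly one of the three announced shapes: the single-centre palindrome $\overline{[c_1,\dots,c_{l-1},c_l,c_{l-1},\dots,c_1]}$ when $n=2l-1$ is odd (one fixed point); the even palindrome $\overline{[c_1,\dots,c_l,c_l,\dots,c_1]}$ when $n$ is even with no fixed point; and the form $\overline{[c_0,c_1,\dots,c_l,c_{l+1},c_l,\dots,c_1]}$ when $n$ is even with two fixed points.

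To correlate the parity of $n$ with $N(\varepsilon_D)$ I would invoke the classical fact that one full period of the algorithm realizes the fundamental automorph of the form attached to $\alpha$, so the period matrix $\prod_{i=1}^{n}\left(\begin{smallmatrix}a_i&1\\1&0\end{smallmatrix}\right)$ corresponds to $\pm\varepsilon_D$ and hence has determinant $(-1)^{n}=N(\varepsilon_D)$; thus $n=l(\alpha)$ is odd exactly when $N(\varepsilon_D)=-1$, and this decides which of the three shapes can occur. For the \emph{in particular} clause I would argue as follows. In the first two (genuinely palindromic) shapes $W$ equals its own reversal on the nose, so the reversal identity gives $-1/\alpha'=\alpha$, i.e. $\alpha\alpha'=-1$; writing the primitive minimal polynomial as $ax^2+bx+c$ with $a>0$, the product of the roots is $c/a=-1$, hence $c=-a$, that is $a=-c$. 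In the third shape, with the period started at a centre, $W=(b_1,b_2,\dots,b_n)$ with $(b_2,\dots,b_n)$ a palindrome, so $\mathrm{rev}(W)=(b_2,\dots,b_n,b_1)$, which as a purely periodic expansion is the tail $\alpha_2=1/(\alpha-b_1)$; the reversal identity now gives $-1/\alpha'=1/(\alpha-b_1)$, i.e. $\alpha+\alpha'=b_1\in\Z$, so $-b/a\in\Z$ and therefore $a\mid b$.

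The step I expect to demand the most care — the real content — is the normal-form reduction: one must verify that rotating the minimal period about the reflection's fixed locus truly produces the three listed shapes with the stated relations between $n$ and $l$, and handle the small or degenerate configurations in which a listed shape would otherwise have a strictly shorter genuine period (these are precisely the configurations ruled out by the parity constraint coming from $N(\varepsilon_D)$). One must also cite the identification of the period matrix with $\pm\varepsilon_D$ at the right level of generality, since it is needed for every cycle of discriminant $D$ and not merely the principal one. By contrast, the closing manipulations with the binary quadratic form are routine once the normal form is in place.
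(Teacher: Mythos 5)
Your proposal is correct and follows essentially the same route as the paper: both translate $\alpha\sim-\frac{1}{\alpha'}$ via the reversal identity into the statement that the period is a cyclic palindrome, classify the possible symmetric normal forms (your fixed-point count for the reflection is equivalent to the paper's case split on the parity of the shift index $i$), and then deduce $a=-c$ from $\alpha=-\frac{1}{\alpha'}$ and $a\mid b$ from $\alpha+\alpha'=c_0\in\Z$ exactly as the paper does. The only difference is that you spell out the standard facts (the reversal identity and the link between the parity of $l(\alpha)$ and $N(\varepsilon_D)$ via the determinant of the period matrix) that the paper takes for granted.
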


\begin{proof}
We prove the assertion for $N(\varepsilon_D)=1$. The other case is similarly proved. When $\alpha \sim -\frac{1}{\alpha'}$, there exists an integer $i \in \{1,...,n\}$ such that
\[
\overline{[a_1,...,a_n]}=\overline{[a_i,a_{i-1},...,a_1,a_n,...,a_{i+1}]}.
\]
holds. 
Therefore, 
\[
\alpha = \overline{[c_1,...,c_{l},c_{l},...,c_1]}
\]
when $i$ is even, and 
\[
\alpha = \overline{[c_0,c_1,...,c_l,c_{l+1},c_{l},...,c_1]}
\]
when $i$ is odd. \par
If $\alpha=\overline{[c_1,...,c_{l},c_{l},...,c_1]}$, then $\alpha=-\frac{1}{\alpha'}$ holds. Hence, we have
\[
a\left(-\frac{1}{\alpha'}\right)^2+b\left(-\frac{1}{\alpha'}\right)+c=0.
\]
Therefore, since $-cx^2+bx-a$ is a minimal polynomial  of $\alpha$, we get $a=-c$. When $\alpha = \overline{[c_0,c_1,...,c_l,c_{l+1},c_{l},...,c_1]}$ holds, we have
\[
\alpha=c_0+\frac{1}{-\frac{1}{\alpha'}}=c_0-\alpha'. 
\]
Since $-\frac{b}{a}=\alpha+\alpha'=c_0 \in \Z$, we get $a|b$.  
\end{proof}

\section*{ \S 3. Proofs of Theorems}
\setcounter{section}{3}
\setcounter{subsection}{0}

Theorems \ref{4pand8p}, \ref{9p}, \ref{16p}, \ref{calpq} and \ref{4pq} have almost identical proofs.

\begin{thm}\label{8p}
Let $p$ be a prime number such that $p \equiv 1$ (mod 4) and let $x_p$ and $y_p$ be integers satisfying $p=x_p^2+y_p^2$ and $0<x_p<y_p$. Then we have 
\[
\kappa^+(8p) \equiv 1-(-1)^{x_p}\ \mathrm{(mod\ 4)}.
\]
\end{thm}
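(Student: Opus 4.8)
The plan is to compute $\kappa^+(8p)$ modulo $4$ by using Proposition \ref{equation}, which expresses $\kappa^+(8p) = \sum_{[\alpha] \in \mathcal{R}(8p)} S(\alpha)$, together with the parity results of Lemma \ref{parity}. Since $8p$ is even and $N(\varepsilon_{8p}) = -1$ (which holds because $8p = x_p^2 \cdot 8 + \ldots$ — more precisely, one knows that the narrow and wide class numbers of $\Q(\sqrt{2p})$ agree in this situation), each $S(\alpha)$ is even, so the sum is governed modulo $4$ by the number of classes $[\alpha]$ with $S(\alpha) \equiv 2 \pmod 4$. I would therefore first establish the sign of $N(\varepsilon_{8p})$ and relate the problem to counting ambiguous classes, i.e.\ classes fixed by $\alpha \mapsto -1/\alpha'$ (equivalently, by complex conjugation / the action sending a form to its opposite).

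The key reduction is that modulo $4$, $\kappa^+(8p) = \sum S(\alpha)$ depends only on the contribution of the conjugation-invariant classes: for a non-invariant class, $\alpha$ and $-1/\alpha'$ are distinct classes with equal $S$-values, so they contribute $2 S(\alpha) \equiv 0 \pmod 4$ when $S(\alpha)$ is even. Wait — that is not quite enough; I need $S(\alpha) \equiv 0 \pmod 2$, which Lemma \ref{parity}(ii) gives, so each non-invariant pair contributes $\equiv 0 \pmod 4$. Hence $\kappa^+(8p) \equiv \sum_{[\alpha] \text{ invariant}} S(\alpha) \pmod 4$. For the invariant classes I would invoke Lemma \ref{equivalent}: such an $\alpha$ has a palindromic continued fraction expansion of the shape $\overline{[c_1,\ldots,c_l,c_l,\ldots,c_1]}$ or $\overline{[c_0,c_1,\ldots,c_l,c_{l+1},c_l,\ldots,c_1]}$, and in the two cases the leading coefficient $a$ of the minimal polynomial satisfies $a = -c$ or $a \mid b$ respectively. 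Translating "$\mathrm{disc} = 8p$ and $a=-c$" or "$\mathrm{disc}=8p$ and $a \mid b$" into explicit Diophantine conditions, one finds these correspond to factorizations of $8p$ of a controlled type, governed by the representation $p = x_p^2 + y_p^2$; the genus theory of discriminant $8p$ says there are exactly two genera, hence a fixed small number of ambiguous forms, and I would enumerate them explicitly (essentially the forms associated to $1, 2, p, 2p$ and $x_p^2, y_p^2$-type data).

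The crux is then to compute $S(\alpha) \bmod 4$ for each of these finitely many ambiguous $\alpha$ and show the total is $\equiv 1 - (-1)^{x_p} \pmod 4$, i.e.\ $0$ when $x_p$ is even and $2$ when $x_p$ is odd. This is where the representation $p = x_p^2 + y_p^2$ with $0 < x_p < y_p$ enters quantitatively: the continued fraction of the relevant reduced number attached to $\sqrt{2p}$ or $\sqrt{p/2}$-type quantities has a period whose structure (and in particular the count $S^+$ of partial quotients $\geq 3$, or equivalently via the $E/O$ matrix bookkeeping of Lemma \ref{number}) is pinned down by the parity of $x_p$. I expect the main obstacle to be exactly this explicit continued-fraction computation: showing that among the ambiguous classes the sum of the $S$-values flips its residue mod $4$ precisely according to $x_p \bmod 2$. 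The cleanest route is probably to use Lemma \ref{number}: write each period as a word in $E$'s and $O$'s, note that $S(\alpha)$ has the same parity as the number of $O$'s (hence the length, via Lemma \ref{parity}(i) — but $8p$ is even, so instead use the direct even-ness from part (ii)), and track the product matrix in $\mathrm{GL}_2(\mathbb{F}_2)$ to decide whether each $S(\alpha)/2$ is even or odd; summing these parities and matching against $(1-(-1)^{x_p})/2$ completes the proof. A careful case split on $p \bmod 8$ may be needed inside this final computation, but it should reduce to a bounded check once the ambiguous forms are listed.
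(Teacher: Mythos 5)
Your reduction to conjugation-invariant (ambiguous) classes, with Lemma \ref{equivalent} for their palindromic periods and Lemma \ref{number} for the mod-4 bookkeeping, is indeed the skeleton of the paper's argument, but there are two genuine gaps. First, your assertion that $N(\varepsilon_{8p})=-1$ always holds is false: for $p=17$ one has $\varepsilon_{136}=35+6\sqrt{34}$ of norm $+1$, and for $p\equiv 1\pmod 8$ both signs occur ($p=41$ gives norm $-1$). The case $N(\varepsilon_{8p})=-1$ is exactly the part already proved in \cite{mori}; the entire content of the paper's proof is the case $N(\varepsilon_{8p})=1$, which your parenthetical dismisses. In that case Lemma \ref{parity}(ii) is not available as stated, so even the evenness of $S$ that you need to kill the non-invariant pairs mod 4 requires care; moreover the invariant classes then split into the two shapes $\overline{[c_1,\dots,c_l,c_l,\dots,c_1]}$ (with $a=-c$) and $\overline{[c_0,c_1,\dots,c_{l+1},\dots,c_1]}$ (with $a\mid b$), and the paper must treat the corresponding representatives $\alpha$ and $\gamma$ separately, proving $S(\alpha)\equiv 1-(-1)^{x_p}$ and $S(\gamma)\equiv 0\pmod 4$.

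Second, the step you yourself flag as ``the main obstacle'' --- pinning down $S$ mod 4 of the ambiguous classes in terms of the parity of $x_p$ --- is where all the work lies, and your proposal contains no mechanism for it. The paper's device is to take the half-period convergent matrix $\left(\begin{smallmatrix}p&q\\ r&s\end{smallmatrix}\right)$, observe that the full-period product is an automorph of the explicit ambiguous form $(x_p+y_p)x^2+2(x_p-y_p)x-(x_p+y_p)$, and extract the identities $p-s=r+q$ and $(p+s)x_p=(r-q)y_p$; these tie the residue of the matrix mod 4 (hence, via Lemma \ref{number}, the residue of $S(\alpha)$ mod 4) to the parity of $x_p$. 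Without an identity of this kind linking the convergents to $x_p$ and $y_p$, the ``bounded check'' you describe cannot be carried out. So the proposal identifies the right reduction but is wrong about the norm of the fundamental unit and leaves the decisive computation unperformed.
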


\begin{proof}
When $N(\varepsilon_{8p})=-1$, this is proved in \cite{mori}. We will suppose $N(\varepsilon_{8p})=1$. Let $\alpha$, $\beta$, $\gamma$ and $\delta$ be the largest solutions of equations
\[
(x_p+y_p)x^2+2(x_p-y_p)x-(x_p+y_p)=0,
\]
\[
 (y_p-x_p)x^2-2(x_p+y_p)x+(x_p-y_p)=0,
\]
\[
x^2-2\lfloor\sqrt{2p}\rfloor x+\lfloor\sqrt{2p}\rfloor^2-2p=0,
\]
and
\[
 2x^2-4\left\lfloor\sqrt{\frac{p}{2}}\right\rfloor x+2\left\lfloor\sqrt{\frac{p}{2}}\right\rfloor^2-p=0
\]
respectively. (The form of continued fraction expansions of $\alpha$ and $\beta$ are $\overline{[a_1,...,a_n,a_n,...,a_1]}$ and $\gamma$ and $\delta$ are $\overline{[a_0,a_1,...,a_n,a_{n+1}, a_n,...,a_1]}$ from $N(\varepsilon_{8p})=1.$) By Lemma \ref{equivalent}, we may take representatives of $\mathcal{R}(8p)$ as 
\[
\alpha,\ \gamma, \ \gamma_1,\ -\frac{1}{\gamma_1'}, \ ..., \gamma_t, \ -\frac{1}{\gamma_t'}.
\] 
From Lemma \ref{parity}, we have
\[
S(\gamma_i) = S\left(-\frac{1}{\gamma_i'}\right) \equiv 0\ \mathrm{(mod\ 2)}.
\]
Hence, by Proposition \ref{equation}, it suffices to prove that
\[
S(\alpha)+S(\gamma) \equiv 1-(-1)^{x_p}\ \mathrm{(mod\ 4)}.
\]\par
First, we will show that $S(\alpha) \equiv 1-(-1)^{x_p}$ (mod 4). Let 
\[
\alpha=\overline{[a_1,...,a_n,a_n,...,a_1]}
\]
be a continued fraction expansion of $\alpha$ (Then $\beta=\overline{[a_{n},...,a_1,a_1,...,a_n]}.$) and set
\[
\left(\begin{matrix}
p&q\\
r&s
\end{matrix}\right)
=
\left(\begin{matrix}
a_1&1\\
1&0
\end{matrix}\right)
...
\left(\begin{matrix}
a_n&1\\
1&0
\end{matrix}\right)
,\ 
\left(\begin{matrix}
P&Q\\
R&S
\end{matrix}\right)
=
\left(\begin{matrix}
p&q\\
r&s
\end{matrix}\right)
\left(\begin{matrix}
p&r\\
q&s
\end{matrix}\right), 
\]
\[
\left(\begin{matrix}
P'&Q'\\
R'&S'
\end{matrix}\right)
=
\left(\begin{matrix}
p&r\\
q&s
\end{matrix}\right)
\left(\begin{matrix}
p&q\\
r&s
\end{matrix}\right).
\]
Then we have
\[
R\alpha^2+(S-P)\alpha-Q=0, \ R'\beta^2+(S'-P')\beta-Q'=0.
\]
Set $d=\mathrm{GCD}(R, S-P, Q)=\mathrm{GCD}(R', S'-P', Q')$. (The greatest common divisor $d$ is $u_{8p}$ from a classical fact of continued fractions.) Then we have
\[
R=d(x_p+y_p), \ S-P=2d(x_p-y_p), \ -Q=-d(x_p+y_p)
\]
and
\[
R'=d(y_p-x_p), \ S'-P'=-2d(x_p+y_p), \ -Q'=-d(y_p-x_p).
\]
Therefore we have
\[
\begin{cases}
pr+qs=d(x_p+y_p),\\
(p^2+q^2)-(r^2+s^2)=2d(y_p-x_p),\\
pq+rs=d(y_p-x_p),\\
(p^2+r^2)-(q^2+s^2)=2d(x_p+y_p).
\end{cases}
\]
From this, we get $p-s=r+q$, $(p+s)x_p=(r-q)y_p$ because $p>r$ holds. In particular, $p+s$ and $q+r$ have the same parities and we have $\left(\begin{matrix}
p&q\\
r&s
\end{matrix}\right)
\equiv
\left(\begin{matrix}
1&0\\
0&1
\end{matrix}\right)$ or 
$\left(\begin{matrix}
0&1\\
1&0
\end{matrix}\right)$ (mod 2).\par
When 
$\left(\begin{matrix}
p&q\\
r&s
\end{matrix}\right)
\equiv
\left(\begin{matrix}
1&0\\
0&1
\end{matrix}\right)$
(mod 2) and $n$ is even, noting that $p-s=r+q$, we have
\[
\left(\begin{matrix}
p&q\\
r&s
\end{matrix}\right)
\equiv
\left(\begin{matrix}
1&0\\
0&1
\end{matrix}\right)
,\ 
\left(\begin{matrix}
3&0\\
0&3
\end{matrix}\right)
,\ 
\left(\begin{matrix}
1&2\\
2&1
\end{matrix}\right)
,\ 
\left(\begin{matrix}
3&2\\
2&3
\end{matrix}\right)
\ \mathrm{(mod \ 4)},
\]
Since $(p+s)x_p=(r-q)y_p$, we obtain $x_p$ is even. From Lemma \ref{number}, we have $S(\alpha) \equiv 2$ (mod 4). Similarly when 
$\left(\begin{matrix}
p&q\\
r&s
\end{matrix}\right)
\equiv
\left(\begin{matrix}
0&1\\
1&0
\end{matrix}\right)$
(mod 2) and $n$ is even, we have
\[
\left(\begin{matrix}
p&q\\
r&s
\end{matrix}\right)
\equiv
\left(\begin{matrix}
0&3\\
1&0
\end{matrix}\right)
,\ 
\left(\begin{matrix}
2&3\\
1&2
\end{matrix}\right)
,\ 
\left(\begin{matrix}
0&1\\
3&0
\end{matrix}\right)
,\ 
\left(\begin{matrix}
2&1\\
3&2
\end{matrix}\right)
\ \mathrm{(mod \ 4)}.
\]
Then we see that $x_p$ is odd and $S(\alpha) \equiv 2$ (mod 4). When 
$\left(\begin{matrix}
p&q\\
r&s
\end{matrix}\right)
\equiv
\left(\begin{matrix}
1&0\\
0&1
\end{matrix}\right)$
(mod 2) and $n$ is odd, we have
\[
\left(\begin{matrix}
p&q\\
r&s
\end{matrix}\right)
\equiv
\left(\begin{matrix}
3&2\\
0&1
\end{matrix}\right)
,\ 
\left(\begin{matrix}
1&2\\
0&3
\end{matrix}\right)
,\ 
\left(\begin{matrix}
3&0\\
2&1
\end{matrix}\right)
,\ 
\left(\begin{matrix}
1&0\\
2&3
\end{matrix}\right)
\ \mathrm{(mod \ 4)}
\]
and we conclude $x_p$ is odd and $S(\alpha) \equiv 2$ (mod 4). Finally, when 
$\left(\begin{matrix}
p&q\\
r&s
\end{matrix}\right)
\equiv
\left(\begin{matrix}
0&1\\
1&0
\end{matrix}\right)$
(mod 2) and $n$ is odd, we have
\[
\left(\begin{matrix}
p&q\\
r&s
\end{matrix}\right)
\equiv
\left(\begin{matrix}
0&1\\
1&2
\end{matrix}\right)
,\ 
\left(\begin{matrix}
0&3\\
3&2
\end{matrix}\right)
,\ 
\left(\begin{matrix}
2&1\\
1&0
\end{matrix}\right)
,\ 
\left(\begin{matrix}
2&3\\
3&0
\end{matrix}\right)
\ \mathrm{(mod \ 4)}
\]
to conclude $x_p$ is even and $S(\alpha) \equiv 0$ (mod 4). As a result, we have $S(\alpha) \equiv 1-(-1)^{x_p}$ (mod 4). \par

Next, we will show that $S(w) \equiv 0$ (mod 4). Let 
\[
\gamma=\overline{[a_0,a_1,...,a_n,a_{n+1},a_{n},...,a_1]}
\]
and
\[
\delta=\overline{[a_{n+1},a_n,...,a_1,a_0,a_1,...,a_n]}
\]
be the continued fraction expansions and set
\[
\left(\begin{matrix}
p&q\\
r&s
\end{matrix}\right)
=
\left(\begin{matrix}
a_1&1\\
1&0
\end{matrix}\right)
...
\left(\begin{matrix}
a_n&1\\
1&0
\end{matrix}\right)
,
\]
\[
\left(\begin{matrix}
P&Q\\
R&S
\end{matrix}\right)
=
\left(\begin{matrix}
a_0&1\\
1&0
\end{matrix}\right)
\left(\begin{matrix}
p&q\\
r&s
\end{matrix}\right)
\left(\begin{matrix}
a_{n+1}&1\\
1&0
\end{matrix}\right)
\left(\begin{matrix}
p&r\\
q&s
\end{matrix}\right)
, \ 
\]
\[
\left(\begin{matrix}
P'&Q'\\
R'&S'
\end{matrix}\right)
=
\left(\begin{matrix}
a_{n+1}&1\\
1&0
\end{matrix}\right)
\left(\begin{matrix}
p&r\\
q&s
\end{matrix}\right)
\left(\begin{matrix}
a_0&1\\
1&0
\end{matrix}\right)
\left(\begin{matrix}
p&q\\
r&s
\end{matrix}\right).
\]\par
From a classical fact of continued fractions, we get
\[
\left(\begin{matrix}
P&Q\\
R&S
\end{matrix}\right)
=
\left(\begin{matrix}
\frac{t_{8p}+2\lfloor\sqrt{2p}\rfloor u_{8p}}{2}&(2p-\lfloor\sqrt{2p}\rfloor^2)u_{8p}\\
u_{8p}&\frac{t_{8p}-2\lfloor\sqrt{2p}\rfloor u_{8p}}{2}
\end{matrix}\right),
\]
\[
\left(\begin{matrix}
P'&Q'\\
R'&S'
\end{matrix}\right)
=
\left(\begin{matrix}
\frac{t_{8p}+4\left\lfloor\sqrt{\frac{p}{2}}\right\rfloor u_{8p}}{2}&(2\left\lfloor\sqrt{\frac{p}{2}}\right\rfloor^2-p)u_{8p}\\
2u_{8p}&\frac{t_{8p}-4\left\lfloor\sqrt{\frac{p}{2}}\right\rfloor u_{8p}}{2}
\end{matrix}\right).
\]
Since $u_{8p}$ is even, we see that $R$ and $Q$ are even and $S \equiv P$ (mod 4). Moreover, since $PS-QR=1$, S and $P$ are both odd. \par
Assume that $P \equiv S \equiv 1$ (mod 4) and $R \equiv 2$ (mod 4). By $R=a_{n+1}p^2+2pq \equiv 2$ (mod 4), $p$ is odd and we have $a_{n+1}+2q \equiv 2$ (mod 4). If $a_{n+1} \equiv 2$ (mod 4), $q$ is even because $R \equiv 2$ (mod 4). Then $s$ is odd from $ps-qr=(-1)^n$. Since $S = a_{n+1}pr+ps+qr \equiv 2r + (-1)^n \equiv 1$ (mod 4), the parities of $r$ and $n$ are the same. If $a_{n+1} \equiv 0$ (mod 4), then $q$ is odd. Moreover, since $ps-qr=(-1)^n$, the parities of $r$ and $s$ are different. As $S \equiv 2r+(-1)^n \equiv 1$ (mod 4), $r$ and $n$ have the same parities. Furthermore, as $R' = a_0p^2+2pr \equiv a_0+2r \equiv 0$ (mod 4), $r$ is even when $a_0 \equiv 0$ (mod 4) and $r$ is odd when $a_0 \equiv 2$ (mod 4).\par
In summary, we have
\[
\begin{pmatrix}
p&q\\
r&s
\end{pmatrix}
\equiv
\begin{cases}
\begin{pmatrix}
1&0\\
1&1
\end{pmatrix}
&(a_{n+1} \equiv 2\ \mathrm{(mod\ 4)}, a_{0} \equiv 2\ \mathrm{(mod\ 4)}, n\mathrm{:odd} )\\
\begin{pmatrix}
1&0\\
0&1
\end{pmatrix}
&(a_{n+1} \equiv 2\ \mathrm{(mod\ 4)}, a_{0} \equiv 0\ \mathrm{(mod\ 4)}, n\mathrm{:even})\\
\begin{pmatrix}
1&1\\
0&1
\end{pmatrix}
&(a_{n+1} \equiv 0\ \mathrm{(mod\ 4)}, a_{0} \equiv 0\ \mathrm{(mod\ 4)}, n\mathrm{:even})\\
\begin{pmatrix}
1&1\\
1&0
\end{pmatrix}
&(a_{n+1} \equiv 0\ \mathrm{(mod\ 4)}, a_{0} \equiv 2\ \mathrm{(mod\ 4)}, n\mathrm{:odd})\\
\end{cases}
\ \mathrm{(mod\ 2)}.
\]
Here, as in our previous discussion on $\alpha$, we see that $\frac{t_{8p}}{2}=\frac{P+S}{2} \equiv 3$ (mod 4) when $u_{8p}=d \equiv 2$ (mod 4). Therefore, the following two cases of the above satisfy the requirement:
\[
\begin{pmatrix}
p&q\\
r&s
\end{pmatrix}
\equiv
\begin{cases}
\begin{pmatrix}
1&0\\
1&1
\end{pmatrix}
&(a_{n+1} \equiv 2\ \mathrm{(mod\ 4)}, a_{0} \equiv 2\ \mathrm{(mod\ 4)}, n\mathrm{:odd})\\
\begin{pmatrix}
1&1\\
1&0
\end{pmatrix}
&(a_{n+1} \equiv 0\ \mathrm{(mod\ 4)}, a_{0} \equiv 2\ \mathrm{(mod\ 4)}, n\mathrm{:odd})\\
\end{cases}
\ \mathrm{(mod\ 2)}.
\]
By Lemma \ref{number}, we get $S(\gamma) \equiv 0$ (mod 4).\par
Similarly, when $P \equiv S \equiv 1$ (mod 4) and $R \equiv 0$ (mod 4), noting that, since $R=2R'$, $r=2q$ holds when $a_0=2a_{n+1}$ and $p+r=2q$ holds when $a_0=2a_{n+1}+2$, we have
\[
\begin{pmatrix}
p&q\\
r&s
\end{pmatrix}
\equiv
\begin{cases}
\begin{pmatrix}
1&0\\
1&1
\end{pmatrix}
&(a_{n+1} \equiv 0\ \mathrm{(mod\ 4)}, a_{0} \equiv 2\ \mathrm{(mod\ 4)}, n\mathrm{:even} )\\
\begin{pmatrix}
1&1\\
0&1
\end{pmatrix}
&(a_{n+1} \equiv 2\ \mathrm{(mod\ 4)}, a_{0} \equiv 0\ \mathrm{(mod\ 4)}, n\mathrm{:even})\\
\begin{pmatrix}
1&1\\
1&0
\end{pmatrix}
&(a_{n+1} \equiv 2\ \mathrm{(mod\ 4)}, a_{0} \equiv 2\ \mathrm{(mod\ 4)}, n\mathrm{:even})\\
\begin{pmatrix}
1&0\\
0&1
\end{pmatrix}
&(a_{n+1} \equiv 0\ \mathrm{(mod\ 4)}, a_{0} \equiv 0\ \mathrm{(mod\ 4)}, n\mathrm{:even})\\
\end{cases}
\ \mathrm{(mod\ 2)}
\]
as well as $S(\gamma) \equiv 0$ (mod 4).\par
When $P \equiv S \equiv 3$ (mod 4) and $R \equiv 2$ (mod 4), we have
\[
\begin{pmatrix}
p&q\\
r&s
\end{pmatrix}
\equiv
\begin{cases}
\begin{pmatrix}
1&0\\
0&1
\end{pmatrix}
&(a_{n+1} \equiv 2\ \mathrm{(mod\ 4)}, a_{0} \equiv 0\ \mathrm{(mod\ 4)}, n\mathrm{:odd} )\\
\begin{pmatrix}
1&1\\
0&1
\end{pmatrix}
&(a_{n+1} \equiv 0\ \mathrm{(mod\ 4)}, a_{0} \equiv 0\ \mathrm{(mod\ 4)}, n\mathrm{:odd})\\
\end{cases}
\ \mathrm{(mod\ 2)}
\]
and $S(\gamma) \equiv 0$ (mod 4).\par
Finally, when $P \equiv S \equiv 3$ (mod 4) and $R \equiv 0$ (mod 4), no matrix satisfies the condition. Therefore, we get $S(\gamma) \equiv 0$ (mod 4).\par
All these show that we have 
\[
\kappa^+(8p) \equiv S(\alpha)+S(\gamma) \equiv 1-(-1)^{x_p}\ \mathrm{(mod\ 4)}.
\]
\end{proof}

\begin{thm}\label{pq}
Let $p$ and $q$ ($p<q$) be prime numbers such that $p \equiv q \equiv 3$ (mod 4) . Then we have 
\[
\kappa^+(pq) \equiv 1-\left(\frac{q}{p}\right)\ \mathrm{(mod\ 4)}.
\]
\end{thm}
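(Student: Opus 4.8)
The plan is to mirror the proof of Theorem~\ref{8p}, now taking advantage of the fact that $D=pq$ is \emph{odd}. First I would record that $N(\varepsilon_{pq})=1$: were $N(\varepsilon_{pq})=-1$, the equation $t^2-pq\,u^2=-4$ would be solvable in integers, forcing $-4$, hence $-1$, to be a square modulo $p$, contrary to $p\equiv3\pmod{4}$. Since $N(\varepsilon_{pq})=1$, the automorph of any form of discriminant $pq$ lies in $\mathrm{SL}_2(\Z)$, so a product $\left(\begin{smallmatrix}a_1&1\\1&0\end{smallmatrix}\right)\cdots\left(\begin{smallmatrix}a_n&1\\1&0\end{smallmatrix}\right)$ of odd length $n$ can never fix a reduced number of discriminant $pq$; thus $l(\alpha)$ is even for every $\alpha\in\mathcal Q(pq)$, and by Lemma~\ref{parity}(i) so is $S(\alpha)$.

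By Proposition~\ref{equation}, $\kappa^+(pq)=\sum_{[\alpha]\in\mathcal R(pq)}S(\alpha)$. Grouping $\mathcal R(pq)$ by the involution $[\alpha]\mapsto[-1/\alpha']$, each two-element orbit $\{[\beta],[-1/\beta']\}$ contributes $S(\beta)+S(-1/\beta')=2S(\beta)\equiv0\pmod{4}$, since $S(\beta)$ is even; hence $\kappa^+(pq)$ is congruent modulo $4$ to the sum of $S(\alpha)$ over the classes with $\alpha\sim-1/\alpha'$. Because $pq$ is not a sum of two squares (each of $p,q\equiv3\pmod{4}$ occurs in it to an odd power, so no form $[a,b,-a]$ of discriminant $pq$ exists), Lemma~\ref{equivalent} forces every such $\alpha$ to have continued fraction of the shape $\overline{[c_0,c_1,\dots,c_l,c_{l+1},c_l,\dots,c_1]}$; and, by genus theory ($h(pq)$ is odd for $p\equiv q\equiv3\pmod{4}$), there is exactly one such class, the principal class $\gamma=\frac{r_0+\sqrt{pq}}{2}$, where $r_0$ is the odd integer in $(\sqrt{pq}-2,\sqrt{pq})$. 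Writing $\gamma=\overline{[r_0,a_1,\dots,a_n,m,a_n,\dots,a_1]}$, we are reduced to proving $S(\gamma)=r_0+m+2\sum_{i=1}^{n}a_i\equiv1-\left(\frac{q}{p}\right)\pmod{4}$.

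This congruence I would establish exactly as $S(\gamma)$ was handled in Theorem~\ref{8p}. Set $\left(\begin{smallmatrix}A&B\\C&E\end{smallmatrix}\right)=\left(\begin{smallmatrix}a_1&1\\1&0\end{smallmatrix}\right)\cdots\left(\begin{smallmatrix}a_n&1\\1&0\end{smallmatrix}\right)$. Then the full-period matrix $\left(\begin{smallmatrix}r_0&1\\1&0\end{smallmatrix}\right)\left(\begin{smallmatrix}A&B\\C&E\end{smallmatrix}\right)\left(\begin{smallmatrix}m&1\\1&0\end{smallmatrix}\right)\left(\begin{smallmatrix}A&C\\B&E\end{smallmatrix}\right)$ equals the automorph $\left(\begin{smallmatrix}\tfrac{t_{pq}+r_0u_{pq}}{2}&\tfrac{pq-r_0^2}{4}u_{pq}\\ u_{pq}&\tfrac{t_{pq}-r_0u_{pq}}{2}\end{smallmatrix}\right)$ of $\bigl[1,-r_0,\tfrac{r_0^2-pq}{4}\bigr]$, while reading the same period from its other centre of symmetry produces the automorph $\left(\begin{smallmatrix}\tfrac{t_{pq}+pmu_{pq}}{2}&\tfrac{q-pm^2}{4}u_{pq}\\ pu_{pq}&\tfrac{t_{pq}-pmu_{pq}}{2}\end{smallmatrix}\right)$ of the ambiguous form $\bigl[p,-pm,\tfrac{pm^2-q}{4}\bigr]$, where $m$ is the odd integer in $(\sqrt{q/p}-2,\sqrt{q/p})$. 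From $t_{pq}^2-pq\,u_{pq}^2=4$ one reads the residues modulo $4$ of the entries of both automorphs; together with $\det\left(\begin{smallmatrix}A&B\\C&E\end{smallmatrix}\right)=(-1)^n$ this determines the parities of $A,B,C,E$ and of $n$, and of $r_0+m$ modulo $4$; and then Lemma~\ref{number}, applied to $\left(\begin{smallmatrix}A&B\\C&E\end{smallmatrix}\right)\bmod 2$ together with the parity of $n$, gives $\sum_{i=1}^{n}a_i\bmod 2$. Running through the resulting cases yields $S(\gamma)\equiv1-\left(\frac{q}{p}\right)\pmod{4}$, the symbol $\left(\frac{q}{p}\right)$ entering via quadratic reciprocity ($\left(\frac{q}{p}\right)=-\left(\frac{p}{q}\right)$, as $p\equiv q\equiv3\pmod{4}$) from the factor $p$ in the second automorph and the congruence $pm^2\equiv q\pmod{4}$.

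The step I expect to be the main obstacle is this final mod-$4$ case analysis, and in particular confirming that the residue symbol which survives is precisely $\left(\frac{q}{p}\right)$ — not $\left(\frac{p}{q}\right)$, $\left(\frac{-1}{p}\right)$, or a product of such symbols. This in turn rests on correctly locating the second ambiguous reduced form (its leading coefficient is $p$, not $q$, because $p<q$) and the value of the middle partial quotient $m$, which is where the hypotheses $p<q$ and $p\equiv q\equiv3\pmod{4}$ are genuinely used. As in the related theorems, the same computation also reads off the period length of the associated minus-continued fraction, giving the corollary.
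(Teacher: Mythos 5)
Your reduction is sound and, if anything, better justified than the paper's own: you correctly note $N(\varepsilon_{pq})=1$, that $l(\alpha)$ and hence (by Lemma \ref{parity}(i), $pq$ being odd) $S(\alpha)$ is even for every class, that the non-ambiguous classes pair off under $[\alpha]\mapsto[-1/\alpha']$ and contribute $0$ modulo $4$, and that Lemma \ref{equivalent} together with the non-representability of $pq$ as a sum of two squares and the oddness of $h(pq)$ leaves exactly one ambiguous class, with the symmetric expansion $\overline{[a_0,a_1,\dots,a_n,a_{n+1},a_n,\dots,a_1]}$ and the two automorphs you write down. All of this matches the paper's (largely implicit) setup for Theorem \ref{pq}.

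The gap is precisely the step you flag as ``the main obstacle,'' and the mechanism you propose for it cannot work. Note first that $pm^2\equiv q\ \mathrm{(mod\ 4)}$ is automatic ($m$ odd, $p\equiv q\equiv 3\ \mathrm{(mod\ 4)}$) and carries no information, and that the determinant of the full-period automorph is $+1$ regardless of the parity of $n$, so neither the determinant nor the residues modulo $4$ of $t_{pq}$, $u_{pq}$ and the automorph entries can detect $n\bmod 2$ as a function of congruence data: for instance $(p,q)=(3,7)$ and $(p,q)=(3,23)$ have $pq\equiv 5\ \mathrm{(mod\ 8)}$ but $\left(\frac{q}{p}\right)=+1$ and $-1$ respectively, so no analysis modulo a fixed modulus can make the symbol ``survive.'' What is actually needed, and what the paper supplies, is Theorem \ref{calpq}: $l(\alpha)=2n+2\equiv 1+\left(\frac{q}{p}\right)\ \mathrm{(mod\ 4)}$, proved by a separate norm argument --- writing $p[1,\beta]=p\,\omega_{n}^{-1}\cdots\omega_0^{-1}[1,\alpha]$, observing that each $N(\omega_i)<0$ ties the sign of $N\!\left(\frac{x+y\sqrt{pq}}{2}\right)=\pm p$ to the parity of $n$, and then reading $\left(\frac{\mp q}{p}\right)=1$ from $x^2-pqy^2=\pm 4p$. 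That input fixes $n\bmod 2$, and only then does Lemma \ref{number} applied to $\left(\begin{smallmatrix}A&B\\C&E\end{smallmatrix}\right)\bmod 2$, together with $a_0+a_{n+1}\bmod 4$ from the automorph entries, yield $S(\alpha)\equiv 1-\left(\frac{q}{p}\right)\ \mathrm{(mod\ 4)}$. So your proof is incomplete at its crux: you must either import Theorem \ref{calpq} (as the paper does) or reproduce its norm argument; quadratic reciprocity plus mod-$4$ bookkeeping will not substitute for it.
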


\begin{proof}
Let $\alpha$ and $\beta$ be the largest solutions of equations
\[
x^2-Bx-\frac{1}{4}(pq-B^2)=0 \left(B=
\begin{cases}
\lfloor\sqrt{pq}\rfloor\ &(\lfloor\sqrt{pq}\rfloor \mathrm{:odd})\\
\lfloor\sqrt{pq}\rfloor-1\ &(\lfloor\sqrt{pq}\rfloor \mathrm{:even})
\end{cases}
\right)
\]
and
\[
px^2-pB'x-\frac{1}{4}(q-p{B'}^2)=0\left(B'=
\begin{cases}
\left\lfloor\sqrt{\frac{q}{p}}\right\rfloor\ &\left(\left\lfloor\sqrt{\frac{q}{p}}\right\rfloor \mathrm{:odd}\right)\\
\left\lfloor\sqrt{\frac{q}{p}}\right\rfloor-1\ &\left(\left\lfloor\sqrt{\frac{q}{p}}\right\rfloor \mathrm{:even}\right)
\end{cases}
\right)
\]
respectively. It suffices to prove that
\[
S(\alpha) \equiv 1-\left(\frac{q}{p}\right)\ \mathrm{(mod\ 4)}. 
\]\par
Let
\[
\alpha=\overline{[a_0,a_1,...,a_{n},a_{n+1},a_{n},...,a_1]}
\]
and
\[
\beta = \overline{[a_{n+1},a_{n},...,a_1,a_0,a_1,...,a_{n-1},a_n]}
\]
be continued fraction expansions of $\alpha$ and $\beta$ respectively. We define
\[
\left(\begin{matrix}
p&q\\
r&s
\end{matrix}\right)
=
\left(\begin{matrix}
a_1&1\\
1&0
\end{matrix}\right)
...
\left(\begin{matrix}
a_n&1\\
1&0
\end{matrix}\right)
, \ 
\left(\begin{matrix}
P&Q\\
R&S
\end{matrix}\right)
=
\left(\begin{matrix}
a_0&1\\
1&0
\end{matrix}\right)
\left(\begin{matrix}
p&q\\
r&s
\end{matrix}\right)
\left(\begin{matrix}
a_{n+1}&1\\
1&0
\end{matrix}\right)
\left(\begin{matrix}
p&r\\
q&s
\end{matrix}\right)
, \ 
\]
\[
\left(\begin{matrix}
P'&Q'\\
R'&S'
\end{matrix}\right)
=
\left(\begin{matrix}
a_{n+1}&1\\
1&0
\end{matrix}\right)
\left(\begin{matrix}
p&r\\
q&s
\end{matrix}\right)
\left(\begin{matrix}
a_0&1\\
1&0
\end{matrix}\right)
\left(\begin{matrix}
p&q\\
r&s
\end{matrix}\right).
\]\par
Assume that $u_{pq}$ is even. Then, from $u_{pq}=a_{n+1}p^2+2pq$, $p$ is even, $q$ and $r$ are odd and $u_{pq} \equiv 0$ (mod 8) holds. By the proof of Theorem \ref{mcal4pq}, we get $t_{pq} \equiv 6$ (mod 8). Therefore, we get $a_{n+1}pr+ps+qr \equiv \frac{t_{pq}-a_0u_{pq}}{2} \equiv 3$ (mod 4) and  $a_{n+1}r^2+a_0(a_{n+1}pr+ps+qr)+2rs \equiv a_{n+1}-a_0+2s \equiv 0$ (mod 4). Assume that $u_{pq}$ is odd. Then, $p$ is odd and $u_{pq} \equiv a_{n+1}+2q$ (mod 4) holds. From $-u_{pq} \equiv a_{0}+2r$ (mod 4), $a_{0}+a_{n+1}+2(r+q) \equiv 0$ (mod 4) holds. \par
In summary, we have
\[
\begin{pmatrix}
p&q\\
r&s
\end{pmatrix}
\equiv
\begin{cases}
\begin{pmatrix}
0&1\\
1&1
\end{pmatrix}
&(u_{pq}\mathrm{:even}, \ a_0 +a_{n+1} \equiv 0\ \mathrm{(mod\ 4)})\\
\begin{pmatrix}
0&1\\
1&0
\end{pmatrix}
&(u_{pq}\mathrm{:even}, \ a_0 +a_{n+1} \equiv 2\ \mathrm{(mod\ 4)})\\
\begin{pmatrix}
1&0\\
0&1
\end{pmatrix}
&(u_{pq}\mathrm{:odd}, \ a_0 +a_{n+1} \equiv 0\ \mathrm{(mod\ 4)})\\
\begin{pmatrix}
1&1\\
1&0
\end{pmatrix}
&(u_{pq}\mathrm{:odd}, \ a_0 +a_{n+1} \equiv 0\ \mathrm{(mod\ 4)})\\
\begin{pmatrix}
1&1\\
0&1
\end{pmatrix}
&(u_{pq}\mathrm{:odd}, \ a_0 +a_{n+1} \equiv 2\ \mathrm{(mod\ 4)})\\
\begin{pmatrix}
1&0\\
1&1
\end{pmatrix}
&(u_{pq}\mathrm{:odd}, \ a_0 +a_{n+1} \equiv 2\ \mathrm{(mod\ 4)})\\
\end{cases}
\ \mathrm{(mod\ 2)}.
\]
Therefore, from Lemma \ref{number} and Theorem \ref{calpq}, we have $S(\alpha) \equiv 1-\left(\frac{q}{p}\right)$ (mod 4).

\end{proof}

\begin{thm}\label{mcal4pq}
Let $p$ and $q$ ($p<q$) be prime numbers such that $p \equiv q \equiv 3$ (mod 4). Then we have 
\[
\kappa^+(4pq) \equiv 2\ \mathrm{(mod\ 4)}.
\]
\end{thm}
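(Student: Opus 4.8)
The plan is to follow the pattern of the proof of Theorem~\ref{8p}, the single new structural input being that for a discriminant of the form $4pq$ the fundamental unit is always $\equiv 1\ (\mathrm{mod}\ 2)$. First I would settle the unit: since $p\equiv q\equiv 3\ (\mathrm{mod}\ 4)$ the fundamental unit of $\Q(\sqrt{pq})$ has norm $1$, and as $\varepsilon_{4pq}=\varepsilon_{pq}^{\mu}$ for some $\mu$ (Proposition~\ref{classnumber}), $N(\varepsilon_{4pq})=1$ as well; in particular $4pq$ is never fundamental and this is precisely the case left open in \cite{mori}. Writing $\varepsilon_{4pq}=t'+u'\sqrt{pq}$ with $t'^{2}-pq\,u'^{2}=1$ (so $t_{4pq}=2t'$, $u_{4pq}=u'$), the congruence $pq\equiv 1,5\ (\mathrm{mod}\ 8)$ forces $u'$ even, since an odd $u'$ would give $t'^{2}\equiv 1+pq\ (\mathrm{mod}\ 8)$, impossible for a square; hence $\varepsilon_{4pq}\equiv 1\ (\mathrm{mod}\ 2)$ in $\Z[\sqrt{pq}]/2$.

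The next step reduces the congruence modulo $4$ to the ambiguous classes. For $\alpha=\overline{[a_{1},\dots,a_{l}]}\in\mathcal{Q}(4pq)$ the period matrix $M_{\alpha}=\bigl(\begin{smallmatrix}a_{1}&1\\1&0\end{smallmatrix}\bigr)\cdots\bigl(\begin{smallmatrix}a_{l}&1\\1&0\end{smallmatrix}\bigr)$ is, up to sign, the automorph of the form of $\alpha$ induced by $\varepsilon_{4pq}$, so $M_{\alpha}\equiv I\ (\mathrm{mod}\ 2)$; since $\det M_{\alpha}=(-1)^{l(\alpha)}=N(\varepsilon_{4pq})=1$ we get $l(\alpha)$ even, and Lemma~\ref{number} then gives $S(\alpha)\equiv\#\{i:a_{i}\ \mathrm{odd}\}\equiv l(\alpha)\equiv 0\ (\mathrm{mod}\ 2)$. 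Thus every class has even $S$, so (by Proposition~\ref{equation} and Lemma~\ref{equivalent}) each non-ambiguous pair $\{\delta,-1/\delta'\}$ contributes $2S(\delta)\equiv 0\ (\mathrm{mod}\ 4)$, whence $\kappa^{+}(4pq)\equiv\sum_{\gamma\ \mathrm{ambiguous}}S(\gamma)\ (\mathrm{mod}\ 4)$.

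Now I would locate the ambiguous classes. By Lemma~\ref{equivalent} an ambiguous class is represented either by $\overline{[c_{1},\dots,c_{l},c_{l},\dots,c_{1}]}$, whose form $(a,b,-a)$ would force $b_{0}^{2}+a^{2}=pq$ with $b=2b_{0}$, impossible since $pq$ (carrying $p\equiv 3\ (\mathrm{mod}\ 4)$ to an odd power) is not a sum of two squares; or by $\overline{[c_{0},\dots,c_{l},c_{l+1},c_{l},\dots,c_{1}]}$ with $a\mid b$. A short computation shows the ambiguous primitive forms of discriminant $4pq$ are exactly $(1,0,-pq),(p,0,-q),(q,0,-p),(pq,0,-1)$; in $\mathcal{R}(4pq)$ (where a form and its negative are identified) one has $(pq,0,-1)\sim(1,0,-pq)$, and since $(p,0,-q)\ast(q,0,-p)=(pq,0,-1)$ also $(q,0,-p)\sim(p,0,-q)$, so there are at most two ambiguous classes: the principal class $\gamma_{0}$, represented by $\lfloor\sqrt{pq}\rfloor+\sqrt{pq}$ (whose purely periodic expansion is the palindromic period of $\sqrt{pq}$), and the class $\gamma_{1}$ of $(p,0,-q)$, the two coinciding exactly when $x^{2}-pq\,y^{2}$ represents $\pm p$.

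The last step is the mod-$4$ bookkeeping, carried out exactly as in Theorem~\ref{8p} and the ``almost identical'' theorems. For $\gamma_{0}$ one writes $\bigl(\begin{smallmatrix}P&Q\\R&S\end{smallmatrix}\bigr)$ for the period matrix of $\sqrt{pq}$; the classical identities give $R=u_{4pq}$ and $P+S=t_{4pq}$, and combining the refined Pell congruences $u_{4pq}\equiv 0\ (\mathrm{mod}\ 4)$ and $(t_{4pq}/2)^{2}\equiv 1\ (\mathrm{mod}\ 16)$ with Lemma~\ref{number} and $S(\gamma_{0})=2\lfloor\sqrt{pq}\rfloor+a_{l+1}+2\sum_{i\le l}a_{i}$ yields $S(\gamma_{0})\equiv 2\ (\mathrm{mod}\ 4)$. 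When $\gamma_{1}\neq\gamma_{0}$, the same analysis applied to the reduced number attached to $(p,0,-q)$ — feeding in the relations for $t_{4pq},u_{4pq}$ already established in the proof of Theorem~\ref{cal4pq} — gives $S(\gamma_{1})\equiv 0\ (\mathrm{mod}\ 4)$; in either case $\sum_{\gamma\ \mathrm{ambiguous}}S(\gamma)\equiv 2\ (\mathrm{mod}\ 4)$, which is the assertion. The main obstacle is exactly this last paragraph: as in Theorem~\ref{8p}, turning the clean input $\varepsilon_{4pq}\equiv 1\ (\mathrm{mod}\ 2)$ and the Pell congruences into the value of $S$ modulo $4$ forces the long case split on the parities of the entries of the inner product matrix and of the outer partial quotients $c_{0},c_{l+1}$; a secondary, essentially routine, difficulty is making the genus-theoretic enumeration of ambiguous classes precise for the non-fundamental discriminant $4pq$.
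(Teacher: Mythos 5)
Your overall strategy matches the paper's: establish $N(\varepsilon_{4pq})=1$ and $u_{4pq}\equiv 0\pmod 4$, reduce $\kappa^+(4pq)$ modulo $4$ to the ambiguous class(es) because non-ambiguous classes pair up with even $S$, and then evaluate $S$ on the principal class by splitting the period at its two palindromic centers and running the mod-$2$/mod-$4$ matrix analysis with Lemma~\ref{number}. But there is a genuine gap at the decisive step, the claim $S(\gamma_0)\equiv 2\pmod 4$. Write the period as $\overline{[a_0,a_1,\dots,a_n,a_{n+1},a_n,\dots,a_1]}$ and let $\bigl(\begin{smallmatrix}p&q\\r&s\end{smallmatrix}\bigr)$ be the half-period product; then $S(\gamma_0)\equiv a_0+a_{n+1}+2k\pmod 4$ with $k=\#\{i\le n:\ a_i\ \mathrm{odd}\}$. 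Your stated inputs ($u_{4pq}\equiv 0\pmod 4$, $(t_{4pq}/2)^2\equiv 1\pmod{16}$, the congruences on the automorph entries) determine $a_0+a_{n+1}\bmod 4$ and the class of $\bigl(\begin{smallmatrix}p&q\\r&s\end{smallmatrix}\bigr)$ in $\mathrm{GL}_2(\mathbb{F}_2)$, hence, via Lemma~\ref{number}, only $n-k\bmod 2$. They do not determine $n\bmod 2$, so $2k\bmod 4$ --- and with it $S(\gamma_0)\bmod 4$ --- is left open in several of your parity cases: for instance the case $\bigl(\begin{smallmatrix}p&q\\r&s\end{smallmatrix}\bigr)\equiv I\pmod 2$ with $a_0\equiv a_{n+1}\equiv 0\pmod 4$ only gives $S(\gamma_0)\equiv 2n\pmod 4$. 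The paper closes exactly this hole with Kutsuna's theorem \cite{hope}, which yields $p^2\equiv\left(\frac{a_0}{2}+r\right)^2+(-1)^{n+1}\pmod 4$ for the matrix entries and hence ``the entry $p$ is even if and only if $n$ is even''; some arithmetic input of this kind, linking the half-period matrix to the parity of $n$, is indispensable and is absent from your outline. (Citing Theorem~\ref{cal4pq} does not repair this: it computes $l(\alpha)\bmod 4$ by a norm argument but does not tell you which parity pattern of $\bigl(\begin{smallmatrix}p&q\\r&s\end{smallmatrix}\bigr)$ accompanies which value of $\left(\frac{q}{p}\right)$.)

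A secondary point: the hypothetical second ambiguous class $\gamma_1\neq\gamma_0$ never occurs. The denominators of the complete quotients of $\sqrt{pq}$ lie strictly between $0$ and $2\sqrt{pq}$, and your own imprimitivity computation for the $(2,2,\ast)$ forms rules out denominator $2$ (and likewise $2p$, $2q$), so the second palindromic center of the period of $\sqrt{pq}$ has denominator $p$ or $q$; hence the class of $(p,0,-q)\sim(q,0,-p)$ \emph{is} the principal class. This is precisely what the paper encodes by asserting that $\beta$, the root of $px^2-2p\lfloor\sqrt{q/p}\rfloor x-p\lfloor\sqrt{q/p}\rfloor^2-q=0$, has period a cyclic shift of that of $\alpha$, and the two centers then feed the entries of \emph{both} automorphs into the case table. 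Your fallback claim $S(\gamma_1)\equiv 0\pmod 4$ in the (vacuous) case $\gamma_1\neq\gamma_0$ is in any event unsupported as written.
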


\begin{proof}
Let $\alpha$ and $\beta$ be the largest solutions of equations
\[
x^2-2\lfloor\sqrt{pq}\rfloor x+\lfloor\sqrt{pq}\rfloor^2-pq=0
\]
and
\[
px^2-2p\left\lfloor\sqrt{\frac{q}{p}}\right\rfloor x-p\left\lfloor\sqrt{\frac{q}{p}}\right\rfloor^2-q=0
\]
respectively. It suffices to prove that
\[
S(\alpha) \equiv 2\ \mathrm{(mod\ 4)}. 
\]\par
Let
\[
\alpha=\overline{[a_0,a_1,...,a_{n},a_{n+1},a_{n},...,a_1]}
\]
and
\[
\beta = \overline{[a_{n+1},a_{n},...,a_1,a_0,a_1,...,a_{n-1},a_n]}
\]
be continued fraction expansions of $\alpha$ and $\beta$ respectively. We define
\[
\left(\begin{matrix}
p&q\\
r&s
\end{matrix}\right)
=
\left(\begin{matrix}
a_1&1\\
1&0
\end{matrix}\right)
...
\left(\begin{matrix}
a_n&1\\
1&0
\end{matrix}\right)
, \ 
\left(\begin{matrix}
P&Q\\
R&S
\end{matrix}\right)
=
\left(\begin{matrix}
a_0&1\\
1&0
\end{matrix}\right)
\left(\begin{matrix}
p&q\\
r&s
\end{matrix}\right)
\left(\begin{matrix}
a_{n+1}&1\\
1&0
\end{matrix}\right)
\left(\begin{matrix}
p&r\\
q&s
\end{matrix}\right)
, \ 
\]
\[
\left(\begin{matrix}
P'&Q'\\
R'&S'
\end{matrix}\right)
=
\left(\begin{matrix}
a_{n+1}&1\\
1&0
\end{matrix}\right)
\left(\begin{matrix}
p&r\\
q&s
\end{matrix}\right)
\left(\begin{matrix}
a_0&1\\
1&0
\end{matrix}\right)
\left(\begin{matrix}
p&q\\
r&s
\end{matrix}\right).
\]\par
Assume that $p$ is even. Then $q$ and $r$ are odd. By $u_{4pq} \equiv 0$ (mod 4), $Q \equiv a_0+a_{n+1}+2s \equiv 0$ (mod 4) holds. Assume that $p$ is odd. When $q$ is even, we have $R \equiv a_{n+1} \equiv 0$ (mod 4). Therefore, $s$ is odd and $Q \equiv a_0+2r \equiv 0$ holds. Similarly, $s$ is odd and $a_{n+1} + 2q \equiv 0$ (mod 4) when $r$ is even and $q$ and $r$ are odd and $a_0 \equiv a_{n+1} \equiv 2$ (mod 4) holds when $s$ is even. \par
In summary, we have
\[
\begin{pmatrix}
p&q\\
r&s
\end{pmatrix}
\equiv
\begin{cases}
\begin{pmatrix}
0&1\\
1&1
\end{pmatrix}
&(a_0 +a_{n+1} \equiv 2\ \mathrm{(mod\ 4)})\\
\begin{pmatrix}
0&1\\
1&0
\end{pmatrix}
&(a_0 +a_{n+1} \equiv 0\ \mathrm{(mod\ 4)})\\
\begin{pmatrix}
1&0\\
0&1
\end{pmatrix}
&(a_0\equiv 0\ \mathrm{(mod\ 4)}, \ a_{n+1} \equiv 0\ \mathrm{(mod\ 4)})\\
\begin{pmatrix}
1&0\\
1&1
\end{pmatrix}
&(a_0 \equiv 2\ \mathrm{(mod\ 4)},\ a_{n+1} \equiv 0\ \mathrm{(mod\ 4)})\\
\begin{pmatrix}
1&1\\
0&1
\end{pmatrix}
&(a_0 \equiv 0\ \mathrm{(mod\ 4)}, \ a_{n+1} \equiv 2\ \mathrm{(mod\ 4)})\\
\begin{pmatrix}
1&1\\
1&0
\end{pmatrix}
&(a_0 \equiv 2\ \mathrm{(mod\ 4)}, \ a_{n+1} \equiv 2\ \mathrm{(mod\ 4)})\\
\end{cases}
\ \mathrm{(mod\ 2)}.
\]
By \cite{hope}, 
\[
p^2 \equiv \left(\frac{a_0}{2}+r\right)^2+(-1)^{n+1}\ \mathrm{(mod\ 4)}
\]
holds. Therefore, $p$ is even if and only if $n$ is even and we have 
\[
S(\alpha) \equiv 2\ \mathrm{(mod\ 4)}
\]
from Lemma \ref{number}. (From this, we get $t_{pq} \equiv 6$ (mod 8) when $u_{pq}$ is even. )
\end{proof}

\begin{thm}\label{pand4p}
Let $p$ be a prime number such that $p \equiv 1$ (mod 4). Then we have
\[
p \equiv 1\ \mathrm{(mod\ 8)} \Rarrow \kappa(p) \equiv \kappa(4p)+2\ \mathrm{(mod\ 4)},
\]
\[
p \equiv 5\ \mathrm{(mod\ 8)} \Rarrow \kappa(p) \equiv \kappa(4p)\ \mathrm{(mod\ 4)}.
\]
\end{thm}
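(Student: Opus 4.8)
The plan is to evaluate both calibers modulo $4$ with the apparatus of Theorem~\ref{8p}. Since $p\equiv1\pmod4$ is prime we have $N(\varepsilon_p)=-1$ and $h(p)$ odd; as $\varepsilon_{4p}=\varepsilon_p^{\mu}$ with $\mu\in\{1,3\}$, also $N(\varepsilon_{4p})=-1$, and $h(4p)$ is odd by Proposition~\ref{classnumber}. Thus both discriminants fall under the $N(\varepsilon_D)=-1$ clause of Lemma~\ref{equivalent}. On $\mathcal{R}(D)$ take the involution $\iota\colon[\alpha]\mapsto[-1/\alpha']$, which preserves period length. By Lemma~\ref{equivalent} an $\iota$-fixed class contains a reduced number with $\alpha\alpha'=-1$, i.e.\ with minimal polynomial $ax^2+bx-a$ and hence $D=b^2+4a^2$; since $p$ is prime there is a unique such reduced number of discriminant $p$ (built from the unique representation $p=x_p^2+y_p^2$, with $2a$ the even member of $\{x_p,y_p\}$) and a unique one of discriminant $4p$. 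Because $h(p),h(4p)$ are odd, $\iota$ has an odd — hence exactly one — fixed class on each of $\mathcal{R}(p),\mathcal{R}(4p)$; call them $[\alpha_0],[\beta_0]$, each with a palindromic period $\overline{[c_1,\dots,c_{l-1},c_l,c_{l-1},\dots,c_1]}$ of odd length $2l-1$. Every other class lies in a $2$-element $\iota$-orbit $\{[\gamma],[\iota\gamma]\}$ contributing $l(\gamma)+l(\iota\gamma)=2l(\gamma)$, so by Proposition~\ref{equation}
\[
\kappa(p)\equiv l(\alpha_0)+2\sum_{\text{orbits}}l(\gamma),\qquad\kappa(4p)\equiv l(\beta_0)+2\sum_{\text{orbits}}l(\delta)\pmod4 .
\]

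For the odd discriminant $p$ the orbit sum can be traded for the $m$-caliber: Lemma~\ref{parity}(i) gives $l(\alpha)\equiv S(\alpha)\pmod2$ on $\mathcal{Q}(p)$, so each orbit contributes $2\bigl(l(\gamma)-S(\gamma)\bigr)\equiv0\pmod4$ to $\kappa(p)-\kappa^+(p)$, and hence, using the second identity of Proposition~\ref{equation}, $\kappa(p)-\kappa^+(p)\equiv l(\alpha_0)-S(\alpha_0)\pmod4$. To compute the right side I would argue as in the proof of Theorem~\ref{8p}: from the palindromy, $M:=\prod\left(\begin{smallmatrix}c_i&1\\1&0\end{smallmatrix}\right)$ over the period of $\alpha_0$ equals $N\!\left(\begin{smallmatrix}c_l&1\\1&0\end{smallmatrix}\right)\!N^{t}$ with $N=\left(\begin{smallmatrix}c_1&1\\1&0\end{smallmatrix}\right)\cdots\left(\begin{smallmatrix}c_{l-1}&1\\1&0\end{smallmatrix}\right)$; the relation $a=-c$ pins $N$ modulo $2$, and $\det M=(-1)^{l(\alpha_0)}=-1$ together with $p=x_p^2+y_p^2$ pins the entries of $M$ modulo $4$, after which Lemma~\ref{number} yields $l(\alpha_0)\bmod2$ and the refinement yields $l(\alpha_0)-S(\alpha_0)\bmod4$. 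The same computation for $\beta_0$ yields $l(\beta_0)\bmod4$, now with the $2$-adic valuation of $u_{4p}$ (equivalently, the splitting type of $2$) intervening, so that the $p\equiv1$ versus $p\equiv5\pmod8$ dichotomy enters here.

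The step I expect to be the real obstacle is the disc-$4p$ orbit sum $\sum_{\text{orbits}}l(\delta)\bmod2$: since $4p$ is even, Lemma~\ref{parity}(ii) makes every $S(\delta)$ even, so $\kappa^+(4p)$ is even while $\kappa(4p)$ is odd, and the device used for $p$ has no analogue. My plan is to compare the disc-$4p$ cycles with the disc-$p$ cycles via the conductor map $\pi\colon\mathcal{R}(4p)\to\mathcal{R}(p)$, whose surjectivity and fibre sizes are governed by Proposition~\ref{classnumber} and by the behaviour of $2$ in $\Q(\sqrt p)$ ($2$ splits, $\mu=1$, $\pi$ bijective when $p\equiv1\pmod8$; $2$ inert, fibres all of size $1$ or all of size $3$ when $p\equiv5\pmod8$). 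One checks $\pi$ is $\iota$-equivariant, so it carries $\iota$-orbits to $\iota$-orbits and $[\beta_0]$ to $[\alpha_0]$; the crux is then a period-length comparison — that the period length of a $\delta\in\pi^{-1}[\mathfrak a]$ and that of the $\gamma$ above it agree modulo $2$, up to an explicit correction in the split case where $\pi$ merges cycles — which would force $\sum_{\text{orbits}}l(\delta)\equiv\sum_{\text{orbits}}l(\gamma)+(p\bmod8\text{ term})\pmod2$. Granting this, the $\kappa^+(p)$-dependent quantities cancel in $\kappa(p)-\kappa(4p)$ and one is left with an explicit function of $p\bmod8$, namely $\kappa(p)-\kappa(4p)\equiv1+\left(\frac2p\right)\pmod4$ — that is, $2$ when $p\equiv1\pmod8$ and $0$ when $p\equiv5\pmod8$. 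Establishing this period-length-mod-$2$ compatibility of the conductor map is the single new ingredient; everything else is already in the proof of Theorem~\ref{8p}.
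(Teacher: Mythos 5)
There is a genuine gap, and in fact the proposal misidentifies where the difficulty of this theorem lies. The step you single out as ``the real obstacle'' --- controlling $\sum_{\text{orbits}}l(\delta)\bmod 2$ for discriminant $4p$ --- is not an obstacle at all: since $p\equiv 1\pmod 4$ is prime, $N(\varepsilon_p)=N(\varepsilon_{4p})=-1$, and when the fundamental unit has norm $-1$ \emph{every} element of $\mathcal{Q}(D)$ has odd period length. Hence each $2$-element $\iota$-orbit contributes $2l(\gamma)\equiv 2\pmod 4$, and the orbit sums collapse to $h(p)-1$ and $h(4p)-1$ modulo $4$ respectively; Proposition \ref{classnumber} (which gives $h(4p)=h(p)$ or $3h(p)$ according to $p\bmod 8$ and the parity of $u_p$) then converts the theorem into the single assertion that $l(\alpha_0)-l(\beta_0)$ is $0$ or $2$ modulo $4$ according to the parity of $u_p$. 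This is exactly the reduction the paper performs, and your detour through $\kappa^+(p)$, Lemma \ref{parity}(i), and an $\iota$-equivariant conductor map $\pi\colon\mathcal{R}(4p)\to\mathcal{R}(p)$ with a ``period-length-mod-$2$ compatibility'' is unnecessary --- and, as you concede, unproven. As stated it is also not obviously true in the form you need: when $2$ splits the map does not simply preserve cycles, and you give no mechanism for the asserted correction term.

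The actual content of the theorem, which your sketch only gestures at (``the same computation for $\beta_0$ yields $l(\beta_0)\bmod 4$, now with the $2$-adic valuation of $u_{4p}$ intervening''), is the comparison of the two principal periods $\alpha_0=\overline{[a_0,a_1,\dots,a_n,a_n,\dots,a_1]}$ and $\beta_0=\overline{[b_0,b_1,\dots,b_m,b_m,\dots,b_1]}$, i.e.\ determining $(-1)^n$ versus $(-1)^m$. The paper does this by equating the palindromic period matrix $\bigl(\begin{smallmatrix}a_0&1\\1&0\end{smallmatrix}\bigr)NN^{t}$ with the automorph $\bigl(\begin{smallmatrix}(t_p+a_0u_p)/2&*\\ u_p&*\end{smallmatrix}\bigr)$, extracting congruences for the entries of $N$ modulo $2$ and $4$ in terms of $t_p$, $u_p$, $x_p$, $y_p$, feeding these into Lemma \ref{number}, and --- crucially, in the case $u_p$ even --- exhibiting an explicit identity expressing the period matrix of $\gamma$ in terms of that of $\alpha$ (entries $p\pm q$, $\tfrac{r\pm s}{2}$, possibly shifted), from which $(-1)^m=(-1)^{n+1}$ follows at once. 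Without this computation, or a substitute for it, your argument does not close: everything after ``the relation $a=-c$ pins $N$ modulo $2$ \dots after which Lemma \ref{number} yields $l(\alpha_0)\bmod 2$'' is a plan rather than a proof, and the final congruence $\kappa(p)-\kappa(4p)\equiv 1+\left(\frac{2}{p}\right)\pmod 4$ is asserted, not derived.
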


\begin{proof}
Let positive integers $x_p$ and $y_p$ satisfy $p=x_p^2+y_p^2$, where $x_p$ is even and $y_p$ is odd. Let $\alpha$, $\beta$, $\gamma$ and $\delta$ be the largest solutions of equations
\[
x^2-Bx-\frac{1}{4}(p-B^2)=0 \left(B=
\begin{cases}
\lfloor\sqrt{p}\rfloor\ &(\lfloor\sqrt{p}\rfloor \mathrm{:odd})\\
\lfloor\sqrt{p}\rfloor-1\ &(\lfloor\sqrt{p}\rfloor \mathrm{:even})
\end{cases}
\right),
\]
\[
\frac{x_{p}}{2}x^2-y_{p}x-\frac{x_{p}}{2}=0,
\]
\[
x^2-2\left\lfloor\frac{\sqrt{p}}{2}\right\rfloor x-\left(p-{\left\lfloor\frac{\sqrt{p}}{2}\right\rfloor}^2\right)=0,
\]
and
\[
y_px^2-2x_px-y_p=0
\]
respectively. By Lemma \ref{classnumber}, we have 
\[
h(4p)=
\begin{cases}
h(p)\ &(p \equiv 1\ \mathrm{(mod\ 8)\ or}\ p \equiv 5\ \mathrm{(mod\ 8)\ and\ }u_p\mathrm{:odd})\\
3h(p) &(p \equiv 5\ \mathrm{(mod\ 8)\ and\ }u_p\mathrm{:even}).
\end{cases}
\]
Therefore, it suffices to prove that
\[
l(\alpha)\equiv 
\begin{cases}
l(\gamma)\ &\left(u_p\mathrm{:odd} \right)\\
l(\gamma)+2 \ & \left(u_p\mathrm{:even}\right)
\end{cases}
\ \mathrm{(mod\ 4)}.
\]\par
Let
\[
\alpha=\overline{[a_0,a_1,...,a_{n},a_{n},...,a_1]},
\]
\[
\beta = \overline{[a_n,a_{n-1},...,a_1,a_0,a_1,...,a_{n-1},a_n]},
\]
\[
\gamma=\overline{[b_0,b_1,...,b_{m},b_{m},...,b_1]}
\]
and
\[
\delta = \overline{[b_m,b_{m-1},...,b_1,b_0,b_1,...,b_{m-1},b_m]}
\]
be continued fraction expansions of $\alpha$, $\beta$, $\gamma$ and $\delta$ respectively and set
\[
\left(\begin{matrix}
p&q\\
r&s
\end{matrix}\right)
=
\left(\begin{matrix}
a_1&1\\
1&0
\end{matrix}\right)
...
\left(\begin{matrix}
a_n&1\\
1&0
\end{matrix}\right)
, \ 
\left(\begin{matrix}
P&Q\\
R&S
\end{matrix}\right)
=
\left(\begin{matrix}
a_0&1\\
1&0
\end{matrix}\right)
\left(\begin{matrix}
p&q\\
r&s
\end{matrix}\right)
\left(\begin{matrix}
p&r\\
q&s
\end{matrix}\right)
, \ 
\]
\[
\left(\begin{matrix}
P'&Q'\\
R'&S'
\end{matrix}\right)
=
\left(\begin{matrix}
p&r\\
q&s
\end{matrix}\right)
\left(\begin{matrix}
a_{0}&1\\
1&0
\end{matrix}\right)
\left(\begin{matrix}
p&q\\
r&s
\end{matrix}\right)
, \ 
\]
\[
\left(\begin{matrix}
t&u\\
v&w
\end{matrix}\right)
=
\left(\begin{matrix}
b_1&1\\
1&0
\end{matrix}\right)
...
\left(\begin{matrix}
b_m&1\\
1&0
\end{matrix}\right)
, \ 
\left(\begin{matrix}
T&U\\
V&W
\end{matrix}\right)
=
\left(\begin{matrix}
b_0&1\\
1&0
\end{matrix}\right)
\left(\begin{matrix}
t&u\\
v&w
\end{matrix}\right)
\left(\begin{matrix}
t&v\\
u&w
\end{matrix}\right)
, \ 
\]
\[
\left(\begin{matrix}
T'&U'\\
V'&W'
\end{matrix}\right)
=
\left(\begin{matrix}
t&v\\
u&w
\end{matrix}\right)
\left(\begin{matrix}
b_0&1\\
1&0
\end{matrix}\right)
\left(\begin{matrix}
t&u\\
v&w
\end{matrix}\right).
\]\par
We assume that $u_p$ is odd. Since $R = p^2+q^2 \equiv 1$ (mod 4), the parities of $p$ and $q$ are different. When $q$ is even, $S'$ is even and $t_p \equiv y_p$ (mod 4) and $Q' \equiv \frac{u_p-1}{2}+(-1)^n \equiv \frac{x_p}{2}$ hold. From $S=pr+qs= \frac{t_p-a_0u_p}{2}$, when $r$ is even, $t_p \equiv a_0$ (mod 4) holds. Moreover, when $r$ is odd, $t_p \equiv a_0+2$ (mod 4) holds. When $p$ is even, $t_p \equiv -y_p$ (mod 4) and $\frac{u_p-1}{2}+(-1)^n +2 \equiv \frac{x_p}{2}$ (mod 4) hold. When $s$ is even, $t_p \equiv a_0$ (mod 4) holds. Futhermore, when $s$ is odd, $t_p \equiv a_0+2$ (mod 4) holds. Next, we assume that $u_p$ is even. Since $R=p^2+q^2 \equiv 2$ (mod 4), when $r$ is even, $P' \equiv a_0 \equiv \frac{t_p}{2}+y_p$ (mod 4) holds. Similarly, when $s$ is even, $S' \equiv a_0 \equiv \frac{t_{p}}{2}-y_p$ (mod 4) holds. Moreover, since $u_{4p}$ is odd, $V=t^2+u^2 \equiv 1$ (mod 4) holds and the parities of $t$ and $u$ are different. When $u$ is even, $U' \equiv (-1)^m \equiv y_p$ (mod 4) holds. When $v$ is even, $W$ is even and $b_0 \equiv 0$ (mod 4) holds. Futhermore, when $w$ is even, $W$ is odd  and $b_0 \equiv 2$ (mod 4) holds. When $t$ is even, $(-1)^{m+1} \equiv y_p$ (mod 4) holds. When $v$ is even, $b_0 \equiv 2$ (mod 4) holds. Similarly, when $w$ is even, $b_0 \equiv 0$ (mod 4) holds.\par
In summary, we have
\[
\begin{pmatrix}
p&q\\
r&s
\end{pmatrix}
\equiv
\begin{cases}
\begin{pmatrix}
1&0\\
0&1
\end{pmatrix}
(u_p:\mathrm{odd}, \ t_p \equiv y_p \ \mathrm{(mod\ 4)}, \ t_p \equiv a_{0}\ \mathrm{(mod\ 4)})\ &(1)\\
\begin{pmatrix}
1&0\\
1&1
\end{pmatrix}
(u_p:\mathrm{odd}, \ t_p \equiv y_p \ \mathrm{(mod\ 4)}, \ t_p \equiv a_{0}+2\ \mathrm{(mod\ 4)})&(2)\\
\begin{pmatrix}
0&1\\
1&0
\end{pmatrix}
(u_p:\mathrm{odd}, \ t_p \equiv -y_p \ \mathrm{(mod\ 4)}, \ t_p \equiv a_{0}\ \mathrm{(mod\ 4)})&(3)\\
\begin{pmatrix}
0&1\\
1&1
\end{pmatrix}
(u_p:\mathrm{odd}, \ t_p \equiv -y_p \ \mathrm{(mod\ 4)}, \ t_p \equiv a_{0}+2\ \mathrm{(mod\ 4)})&(4)\\
\begin{pmatrix}
1&1\\
0&1
\end{pmatrix}
(u_p:\mathrm{even}, \ a_0 \equiv \frac{t_p}{2}+y_p\ \mathrm{(mod\ 4)})&(5)\\
\begin{pmatrix}
1&1\\
1&0
\end{pmatrix}
(u_p:\mathrm{even}, \ a_0 \equiv \frac{t_p}{2}-y_p\ \mathrm{(mod\ 4)})&(6)\\
\end{cases}
\ \mathrm{(mod\ 2)},
\]
\[
\begin{pmatrix}
t&u\\
v&w
\end{pmatrix}
\equiv
\begin{cases}
\begin{pmatrix}
1&0\\
0&1
\end{pmatrix}
(b_0 \equiv 0 \ \mathrm{(mod\ 4)}, \ y_p \equiv (-1)^{m}\ \mathrm{(mod\ 4)})\ &\mathrm{(i)}\\
\begin{pmatrix}
0&1\\
1&0
\end{pmatrix}
(b_0 \equiv 0 \ \mathrm{(mod\ 4)}, \ y_p \equiv (-1)^{m+1}\ \mathrm{(mod\ 4)})&\mathrm{(ii)}\\
\begin{pmatrix}
1&0\\
1&1
\end{pmatrix}
(b_0 \equiv 2 \ \mathrm{(mod\ 4)}, \ y_p \equiv (-1)^{m}\ \mathrm{(mod\ 4)})&\mathrm{(iii)}\\
\begin{pmatrix}
0&1\\
1&1
\end{pmatrix}
(b_0 \equiv 2 \ \mathrm{(mod\ 4)}, \ y_p \equiv (-1)^{m+1}\ \mathrm{(mod\ 4)})&\mathrm{(iv)}\\
\end{cases}
 \ \mathrm{(mod\ 2)}.
\]\par
First, we discuss the case when $u_p$ is odd. When (1) and (i) holds, $Q' \equiv \frac{u_p-1}{2}+(-1)^n \equiv \frac{x_p}{2}$ (mod 4) and $W' \equiv u_{4p}-1 \equiv \frac{t_{4p}}{2}-x_p$ (mod 8). Since $t_{4p}=(t_{p}^2+3)t_p$ and $u_{4p} = \frac{t_{p}^2+1}{2}u_{p}$, we have $(-1)^n \equiv (-1)^m$ (mod 4) and $l(\alpha) \equiv l(\gamma)$ (mod 4). The other cases can be similarly shown it. \par
Next, we assume that $u_p$ is even. Since $\begin{pmatrix}
P&Q\\
R&S
\end{pmatrix}$
and
$\begin{pmatrix}
T&U\\
V&W
\end{pmatrix}$
 are elements of the automorphism groups of $\alpha$ and $\beta$ respectively, it holds
\[
\begin{pmatrix}
t&u\\
v&w
\end{pmatrix}
=
\begin{cases}
\begin{pmatrix}
p+q&p-q\\
\frac{r+s}{2}&\frac{r-s}{2}
\end{pmatrix}\ &(a_0=\left\lfloor\sqrt{p}\right\rfloor)\\
\begin{pmatrix}
p+q&p-q\\
\frac{r+s}{2}+\frac{p+q}{2}&\frac{r-s}{2}+\frac{p-q}{2}
\end{pmatrix} &(a_0=\left\lfloor\sqrt{p}\right\rfloor-1)
\end{cases}.
\]
Therefore, we have $(-1)^n = (-1)^{m+1}$ and $l(\alpha) \equiv l(\gamma)+2$ (mod 4). 
\end{proof}

\begin{thm}\label{cal8p}
Let $p$ be a prime number such that $p \equiv 1$ (mod 4). Then we have 
\[
p \equiv 1\ \mathrm{(mod\ 8)} \Rarrow \kappa(8p) \equiv 2\ \mathrm{(mod\ 4)},
\]
\[
p \equiv 5\ \mathrm{(mod\ 8)} \Rarrow \kappa(8p) \equiv 0\ \mathrm{(mod\ 4)}.
\]
\end{thm}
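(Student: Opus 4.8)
The plan is to compute $\kappa(8p)=\sum_{[\alpha]\in\mathcal{R}(8p)}l(\alpha)$ modulo $4$ via Proposition \ref{equation}, following the strategy of the proof of Theorem \ref{8p}. The organising remark is the classical fact that, for every $\alpha\in\mathcal{Q}(8p)$, the period length $l(\alpha)$ is even when $N(\varepsilon_{8p})=1$ and odd when $N(\varepsilon_{8p})=-1$ (one traversal of the period realises the fundamental automorphism, whose norm is $N(\varepsilon_{8p})$); the case $N(\varepsilon_{8p})=-1$ genuinely occurs, e.g.\ for $p=5$, where $\kappa(40)=4$.

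Assume first $N(\varepsilon_{8p})=1$. As in the proof of Theorem \ref{8p} there are exactly two classes of $\mathcal{R}(8p)$ with $\alpha\sim-1/\alpha'$, so by Lemma \ref{equivalent} one may take as representatives of $\mathcal{R}(8p)$ the two numbers $\alpha,\gamma$ used there --- of palindromic shapes $\overline{[a_1,\dots,a_n,a_n,\dots,a_1]}$ and $\overline{[a_0,a_1,\dots,a_n,a_{n+1},a_n,\dots,a_1]}$, so that $l(\alpha)=2n$ and $l(\gamma)=2n'+2$ --- together with pairs $\gamma_i,-1/\gamma_i'$. Since $l(-1/\gamma_i')=l(\gamma_i)$ is even, each pair contributes $2l(\gamma_i)\equiv0\pmod4$, whence $\kappa(8p)\equiv l(\alpha)+l(\gamma)\equiv2(n+n'+1)\pmod4$, and it remains to determine the parity of $n+n'$.

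For this I would rerun the modulo-$4$ matrix analysis carried out in the proof of Theorem \ref{8p} for these same $\alpha$ and $\gamma$, extracting the parities of $n$ and $n'$ rather than that of $x_p$. For a palindromic reduced number the half-period matrix $\left(\begin{smallmatrix}a_1&1\\1&0\end{smallmatrix}\right)\cdots\left(\begin{smallmatrix}a_n&1\\1&0\end{smallmatrix}\right)$ has determinant $(-1)^n$, and by Lemma \ref{number} it reduces modulo $2$ into $\{I,O,O^2\}$ exactly when $n$ is congruent modulo $2$ to the number of odd partial quotients; in each of the cases already enumerated in that proof --- indexed by $t_{8p}/2\bmod4$, by $u_{8p}\bmod4$ (these being linked, $u_{8p}$ even), and by $\lfloor\sqrt{2p}\rfloor\bmod2$ --- the mod-$4$ value of this matrix, and hence the parity of $n$ (resp.\ $n'$), is already determined. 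Summing, the parity of $n+n'$, and with it $l(\alpha)+l(\gamma)\bmod4$, depends only on $p\bmod8$ through the decomposition $p=x_p^2+y_p^2$, giving $\kappa(8p)\equiv2\pmod4$ when $p\equiv1\pmod8$ and $\kappa(8p)\equiv0\pmod4$ when $p\equiv5\pmod8$. For $N(\varepsilon_{8p})=-1$ one proceeds in the same spirit: the two classes with $\alpha\sim-1/\alpha'$ now have odd palindromic shape $\overline{[c_1,\dots,c_{m-1},c_m,c_{m-1},\dots,c_1]}$ of length $2m-1$, and each remaining pair contributes $2l(\gamma_i)\equiv2\pmod4$, so $\kappa(8p)\equiv2(m_1+m_2)+h(8p)\pmod4$; besides the parities of $m_1,m_2$ one then also needs $h(8p)$ modulo $4$ as a function of $p\bmod8$, which is provided by genus theory of $\Q(\sqrt{2p})$ (or can be imported from \cite{mori}).

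The main obstacle is the lengthy case-by-case reduction of the continued-fraction matrices modulo $4$ subject to $PS-QR=1$, $R\equiv Q\equiv0\pmod2$, and the prescribed value of $t_{8p}$ modulo $8$ --- exactly as in the proof of Theorem \ref{8p}, but sharpened so as to read off the parities of the half-period lengths rather than only the residues of $S$ --- together with, in the case $N(\varepsilon_{8p})=-1$, the determination of $h(8p)$ modulo $4$. Conceptually nothing new is needed beyond Theorem \ref{8p}: only the fact that $l(\alpha)$ has parity determined by $N(\varepsilon_{8p})$, and the rereading of each tabulated matrix case through Lemma \ref{number}.
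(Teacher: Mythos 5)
Your overall architecture is the same as the paper's: split on $N(\varepsilon_{8p})$, take as representatives of $\mathcal{R}(8p)$ the two classes fixed by $\alpha\mapsto-1/\alpha'$ together with pairs $\gamma_i,-1/\gamma_i'$, use the fact that $(-1)^{l}$ equals $N(\varepsilon_{8p})$ to evaluate each pair's contribution modulo $4$, and then pin down the two distinguished lengths by a mod-$4$ analysis of the continued-fraction matrices. In the case $N(\varepsilon_{8p})=1$ your reduction to $l(\alpha)+l(\gamma)\bmod 4$ and the plan to reread the tables from the proof of Theorem \ref{8p} is exactly what the paper does (it quotes $l(\alpha)\equiv 2,0$ and $l(\gamma)\equiv 0,2$ according as $u_{8p}\equiv 2,0\pmod 4$, so the sum is $2$); you should, however, record explicitly that $N(\varepsilon_{8p})=1$ forces $p\equiv 1\pmod 8$, since otherwise the uniform answer ``$\equiv 2$'' in this case would not match the stated dichotomy.

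The genuine gaps are in the case $N(\varepsilon_{8p})=-1$. First, you cannot obtain the parities of $m_1,m_2$ by ``rereading each tabulated matrix case'' of Theorem \ref{8p}: that proof is carried out under the hypothesis $N(\varepsilon_{8p})=1$ (the representatives there have even-length palindromic periods and $u_{8p}$ even), so its tables simply do not apply to the odd-length palindromes $\overline{[a_0,a_1,\dots,a_n,a_n,\dots,a_1]}$ arising here. The paper runs a separate computation for this case, resting on two inputs you do not mention: the congruence $\bigl(\begin{smallmatrix}P&Q\\R&S\end{smallmatrix}\bigr)\equiv\bigl(\begin{smallmatrix}T&U\\V&W\end{smallmatrix}\bigr)\pmod 4$ of the automorphism matrices of the two distinguished numbers (quoted from \cite{mori}), and the relation $R'=p^2+r^2=u_{8p}$, $V'=t^2+v^2=2u_{8p}$ (or with the roles exchanged), which is what ties the two half-periods together and forces their contributions to combine correctly. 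Second, your alternative bookkeeping $\kappa(8p)\equiv 2(m_1+m_2)+h(8p)\pmod 4$ is arithmetically right, but it converts the problem into determining $h(8p)\bmod 4$; this is not ``genus theory'' in the ordinary sense (which only gives the $2$-rank) but the $4$-rank of the narrow class group of $\Q(\sqrt{2p})$, i.e.\ a R\'edei--Reichardt-type statement equivalent to the solvability of $x^2-2py^2=\pm 2$ --- a nontrivial input that the paper's route deliberately avoids by working with the two distinguished classes directly. Until the parities of $m_1,m_2$ (equivalently $l(\alpha)+l(\beta)\bmod 4$) and, on your route, $h(8p)\bmod 4$ are actually established, the proof is not complete: those computations are the content of the theorem, not an afterthought.
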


\begin{proof}
Let $x_p$ and $y_p$ be integers satisfying $p=x_p^2+y_p^2$ and $0<x_p<y_p$ and $\alpha$, $\beta$, $\gamma$ and $\delta$ be the largest solutions of equations
\[
(x_p+y_p)x^2+2(x_p-y_p)x-(x_p+y_p)=0,
\]
\[
 (y_p-x_p)x^2-2(x_p+y_p)x+(x_p-y_p)=0,
\]
\[
x^2-2\lfloor\sqrt{2p}\rfloor x+\lfloor\sqrt{2p}\rfloor^2-2p=0,
\]
and
\[
 2x^2-4\left\lfloor\sqrt{\frac{p}{2}}\right\rfloor x+2\left\lfloor\sqrt{\frac{p}{2}}\right\rfloor^2-p=0
\]
respectively. It suffices to prove that 
\[
l(\alpha)+l(\beta)\equiv 2\ \mathrm{(mod\ 4)}\quad (N(\varepsilon_{8p})=-1),
\]
\[
l(\alpha)+l(\gamma)\equiv 2\ \mathrm{(mod\ 4)}\quad (N(\varepsilon_{8p})=1).
\]\par
When $N(\varepsilon_{8p})=-1$ holds, let
\[
\alpha=\overline{[a_0,a_1,...,a_{n},a_{n},...,a_1]}
\]
and
\[
\beta = \overline{[b_0,b_1,...,b_{m},b_{m},...,b_1]}
\]
be continued fraction expansions of $\alpha$ and $\beta$ respectively and set 
\[
\left(\begin{matrix}
p&q\\
r&s
\end{matrix}\right)
=
\left(\begin{matrix}
a_1&1\\
1&0
\end{matrix}\right)
...
\left(\begin{matrix}
a_n&1\\
1&0
\end{matrix}\right)
, \ 
\left(\begin{matrix}
P&Q\\
R&S
\end{matrix}\right)
=
\left(\begin{matrix}
a_0&1\\
1&0
\end{matrix}\right)
\left(\begin{matrix}
p&q\\
r&s
\end{matrix}\right)
\left(\begin{matrix}
p&r\\
q&s
\end{matrix}\right)
\]
\[
\left(\begin{matrix}
t&u\\
v&w
\end{matrix}\right)
=
\left(\begin{matrix}
b_1&1\\
1&0
\end{matrix}\right)
...
\left(\begin{matrix}
b_m&1\\
1&0
\end{matrix}\right)
, \ 
\left(\begin{matrix}
T&U\\
V&W
\end{matrix}\right)
=
\left(\begin{matrix}
b_0&1\\
1&0
\end{matrix}\right)
\left(\begin{matrix}
t&u\\
v&w
\end{matrix}\right)
\left(\begin{matrix}
t&v\\
u&w
\end{matrix}\right) .
\]
According to \cite{mori}, since 
$\begin{pmatrix}
P&Q\\
R&S
\end{pmatrix}\equiv
\begin{pmatrix}
T&U\\
V&W
\end{pmatrix}$ (mod 4), we get, when 
$\begin{pmatrix}
P&Q\\
R&S
\end{pmatrix}\equiv
\begin{pmatrix}
0&1\\
1&2
\end{pmatrix}$ (mod 4), $\biggl(\begin{pmatrix}
t&u\\
v&w
\end{pmatrix}$ also satisfies the same condition when 
$\begin{pmatrix}
T&U\\
V&W
\end{pmatrix}\equiv
\begin{pmatrix}
0&1\\
1&2
\end{pmatrix}$ (mod 4)$.\biggr)$
\[
\begin{pmatrix}
p&q\\
r&s
\end{pmatrix}
\equiv
\begin{cases}
\begin{pmatrix}
1&1\\
1&0
\end{pmatrix}
(n\mathrm{:even}, \ a_0 \equiv 2\ \mathrm{(mod\ 4)})\\
\begin{pmatrix}
0&1\\
1&0
\end{pmatrix}
(n\mathrm{:odd}, \ a_0 \equiv 2\ \mathrm{(mod\ 4)})\\
\begin{pmatrix}
1&0\\
1&1
\end{pmatrix}
(n\mathrm{:even}, \ a_0 \equiv 0\ \mathrm{(mod\ 4)})\\
\begin{pmatrix}
0&1\\
1&1
\end{pmatrix}
(n\mathrm{:odd}, \ a_0 \equiv 0\ \mathrm{(mod\ 4)})
\end{cases}\mathrm{(mod\ 2)}.
\]
Since $R' =p^2+r^2=u_{8p}$ and $V'=t^2+v^2=2u_{8p}$ or $R'=2u_{8p}$ and $V'=u_{8p}$ holds, we get $l(\alpha)+l(\beta) \equiv 2$ (mod 4). We can find $l(\alpha)+l(\beta) \equiv 2$ (mod 4) in the same way as in the other cases. \par
When $N(\varepsilon_{8p})=1$ (from which $p \equiv 1$ (mod 8) follows), we have from the proof of Proposition \ref{8p}
\[
l(\alpha)\equiv
\begin{cases}
2\ &(u_{8p} \equiv 2\ \mathrm{(mod\ 4)})\\
0\ &(u_{8p} \equiv 0\ \mathrm{(mod\ 4)})\\
\end{cases}
\]
and
\[
l(\gamma)\equiv
\begin{cases}
0\ &(u_{8p} \equiv 2\ \mathrm{(mod\ 4)})\\
2\ &(u_{8p} \equiv 0\ \mathrm{(mod\ 4)})\\
\end{cases}.
\]
Therefore, we get $l(\alpha)+l(\gamma)\equiv 2$ (mod 4).

\end{proof}

\begin{thm}\label{4pand8p}
Let $p$ be a prime number such that $p \equiv 3$ (mod 4). Then we have
\[
p \equiv 3\ \mathrm{(mod\ 8)} \Rarrow \kappa(4p) \equiv \kappa(8p) \equiv 2 \ \mathrm{(mod\ 4)},
\]
\[
p \equiv 7\ \mathrm{(mod\ 8)} \Rarrow \kappa(4p) \equiv \kappa(8p) \equiv 0 \ \mathrm{(mod\ 4)}.
\]
\end{thm}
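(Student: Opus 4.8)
The plan is to run the arguments for $\kappa(4p)$ and $\kappa(8p)$ on parallel tracks, imitating the proofs of Theorems \ref{pand4p} and \ref{cal8p}. The first point is that, since the prime $p\equiv 3\pmod 4$ divides both discriminants, a congruence modulo $8$ shows $x^2-py^2=-1$ and $x^2-2py^2=-1$ are unsolvable, so $N(\varepsilon_{4p})=N(\varepsilon_{8p})=1$; in particular there is no ``$N(\varepsilon_D)=-1$'' case to treat. A consequence I will use repeatedly: every automorph of a form of discriminant $D\in\{4p,8p\}$ comes from a solution of $x^2-Dy^2=4$ and hence has determinant $+1$, which forces the period length $l(\gamma)$ of every $\mathrm{GL}_2(\Z)$-class $[\gamma]\in\mathcal R(D)$ to be even.

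Next I reduce modulo $4$. By Proposition \ref{equation}, $\kappa(D)=\sum_{[\gamma]\in\mathcal R(D)}l(\gamma)$. Splitting off the classes with $\gamma\sim-1/\gamma'$, the remaining classes pair as $\{[\gamma_i],[-1/\gamma_i']\}$ with $l(\gamma_i)=l(-1/\gamma_i')$, so each pair contributes $2l(\gamma_i)\equiv 0\pmod 4$ by the evenness above. Hence $\kappa(D)\equiv\sum_{\gamma\sim-1/\gamma'}l(\gamma)\pmod 4$, and Proposition \ref{classnumber} together with genus theory (the $2$-rank of the class group of the fundamental discriminants $4p,8p$ being small) tells us this sum runs over only a couple of explicit ambiguous classes, each represented, by Lemma \ref{equivalent}, by a palindromic continued fraction $\overline{[c_1,\dots,c_l,c_l,\dots,c_1]}$ or $\overline{[c_0,\dots,c_l,c_{l+1},c_l,\dots,c_1]}$.

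For the explicit forms I would take, in the $8p$ case, the two numbers $\gamma,\delta$ already used in Theorem \ref{cal8p}, namely the largest roots of $x^2-2\lfloor\sqrt{2p}\rfloor x+\lfloor\sqrt{2p}\rfloor^2-2p=0$ and $2x^2-4\lfloor\sqrt{p/2}\rfloor x+2\lfloor\sqrt{p/2}\rfloor^2-p=0$; in the $4p$ case, the root near $\sqrt p$ of $x^2-2\lfloor\sqrt p\rfloor x+\lfloor\sqrt p\rfloor^2-p=0$ (the principal cycle) and the reduced root of the cycle of the ramified prime above $2$, a form $2x^2-2Bx+C$ with $B$ odd near $\sqrt p$. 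Writing $\left(\begin{smallmatrix}a&b\\c&d\end{smallmatrix}\right)=\left(\begin{smallmatrix}a_1&1\\1&0\end{smallmatrix}\right)\cdots\left(\begin{smallmatrix}a_n&1\\1&0\end{smallmatrix}\right)$ and assembling the full automorph exactly as in the proof of Theorem \ref{8p}, the classical identification of this automorph with the matrix built from $t_D$ and $u_D$ determines $\left(\begin{smallmatrix}a&b\\c&d\end{smallmatrix}\right)\bmod 2$ and the parity of $n$ from the $2$-adic data of $(t_D,u_D)$. Feeding this into Lemma \ref{number} yields $l(\gamma)\bmod 4$ for each ambiguous cycle, hence $\kappa(D)\bmod 4$. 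The split $p\equiv 3$ versus $p\equiv 7\pmod 8$ should enter through a $2$-adic congruence for $t_D$ (respectively the evenness of $u_D$) of the same shape as the one proved inside Theorem \ref{mcal4pq} (``$t_{pq}\equiv 6\pmod 8$ when $u_{pq}$ is even''): concretely, for $D=4p$ one has $u_{4p}$ odd and $t_{4p}\equiv 4\pmod 8$ exactly when $p\equiv 3\pmod 8$ (and $t_{4p}\equiv 0\pmod 8$ when $p\equiv 7\pmod 8$), and it is this $\left(\tfrac 2p\right)$-type congruence that flips $\kappa(D)$ between $2$ and $0$ modulo $4$, identically for $4p$ and for $8p$.

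The conceptual skeleton is routine; the real work is the last paragraph. The hard part will be normalising the two explicit ambiguous forms for $4p$ correctly --- here, unlike the $p\equiv 1\pmod 4$ situation of Theorem \ref{pand4p}, $p$ is not a sum of two squares and the second ambiguous cycle is that of the prime above $2$ --- and then pushing the modulo-$4$ bookkeeping of the automorph matrices through all sub-cases uniformly for $4p$ and $8p$, so that the dependence on $\left(\tfrac 2p\right)$ comes out cleanly. I also need to verify carefully the genus-theoretic count in the reduction step, so that no ambiguous class is overlooked.
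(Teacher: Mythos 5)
Your proposal has a genuine gap, and it occurs already at the reduction step. For $p\equiv 3\pmod 4$ the class numbers $h(4p)$ and $h(8p)$ are odd, so the involution $[\gamma]\mapsto[-1/\gamma']$ on $\mathcal R(D)$ fixes \emph{only} the principal class: there is exactly one ambiguous cycle, not ``a couple.'' The two quadratic irrationals you propose to use --- the root of the principal form $x^2-2\lfloor\sqrt{D/4}\rfloor x-\cdots$ and the reduced root of the form $2x^2-\cdots$ attached to the prime above $2$ --- are $\mathrm{GL}_2(\Z)$-equivalent (the prime above $2$ is principal in the wide sense, since $(\sqrt{p})$ resp.\ $(\sqrt{2p})$ absorbs the narrow ambiguity); concretely they are the two symmetric points $\overline{[a_0,a_1,\dots,a_n,a_{n+1},a_n,\dots,a_1]}$ and $\overline{[a_{n+1},a_n,\dots,a_1,a_0,a_1,\dots,a_n]}$ of one palindromic cycle. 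Summing $l$ over both, as your reduction prescribes, double-counts the unique ambiguous class and would yield $\kappa(D)\equiv 2l(\alpha)\equiv 0\pmod 4$ unconditionally, contradicting the statement for $p\equiv 3\pmod 8$. This is the opposite of the danger you flag (overlooking a class).

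The second problem is that the decisive computation is only gestured at, and the route you sketch is missing its key input. Lemma \ref{number} relates the product matrix mod $2$ to $n-k\pmod 2$ (with $k$ the number of odd partial quotients), so to extract the parity of $n$ --- which is what controls $l(\alpha)=2n+2\bmod 4$ --- you need a second handle. In Theorems \ref{pand4p} and \ref{cal8p} that handle is the explicit representation $p=x_p^2+y_p^2$, which does not exist for $p\equiv 3\pmod 4$; your substitute congruence $t_{4p}\equiv 4$ or $0\pmod 8$ is true but does not by itself pin down $n\bmod 2$. What the paper actually does is different and much shorter: it writes $2[1,\beta]=2\omega_n^{-1}\cdots\omega_0^{-1}[1,\alpha]$ for the half-cycle product joining the two symmetric points, observes that this element has norm $\pm 2$ with sign $(-1)^{n+1}$ (each $N(\omega_i)<0$), and notes that $x^2-\frac{D}{4}y^2=2$ forces $\bigl(\frac{2}{p}\bigr)=1$, i.e.\ $p\equiv 7\pmod 8$, while $=-2$ forces $p\equiv 3\pmod 8$. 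That single norm computation replaces the entire automorph-matrix bookkeeping you defer to ``the last paragraph,'' and it is exactly the datum (solvability of $x^2-py^2=\pm 2$) your plan would have to rediscover to close the argument.
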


\begin{proof}
Let $D$ stand for $4p$ or $8p$ and $\alpha$ and $\beta$ be the largest solutions of equations
\[
x^2-2\left\lfloor\sqrt{\frac{D}{4}}\right\rfloor x-\left(\frac{D}{4}-\left\lfloor\sqrt{\frac{D}{4}}\right\rfloor^2\right)=0
\]
and
\[
\begin{cases}
2x^2-2Bx-\frac{1}{2}(p-B^2)=0 \left(B=
\begin{cases}
\left\lfloor\sqrt{p}\right\rfloor\ &(\lfloor\sqrt{p}\rfloor \mathrm{:odd})\\
\left\lfloor\sqrt{p}\right\rfloor-1\ &(\lfloor\sqrt{p}\rfloor \mathrm{:even})
\end{cases}
\right)&(D=4p)\\
2x^2-4\left\lfloor\sqrt{\frac{p}{2}}\right\rfloor x-(p-2\left\lfloor\sqrt{\frac{p}{2}}\right\rfloor^2)=0&(D=8p)
\end{cases}
\]
respectively. It suffices to prove that 
\[
l(\alpha) \equiv 
\begin{cases}
2\ & (p \equiv 3\ \mathrm{(mod\ 8)})\\
0\ &(p \equiv 7\ \mathrm{(mod\ 8)}) 
\end{cases}
\ \mathrm{(mod\ 4)}.
\]\par
Let
\[
\alpha=\overline{[a_0,a_1,...,a_{n},a_{n+1},a_{n},...,a_1]}
\]
and
\[
\beta = \overline{[a_{n+1},a_{n},...,a_1,a_0,a_1,...,a_{n-1},a_n]}
\]
be continued fraction expansions of $\alpha$ and $\beta$ respectively. Let $\omega_i$ be an element of $\mathcal{Q}(D)$ such that
\[
\omega_i=\overline{[a_i,a_{i-1},...,a_{1},a_{0},a_{1},...,a_{n},a_{n+1},a_{n},...,a_{i+1}]}.
\] 
Then we obtain
\begin{align*}
2[1,\beta]
&=2[1,\omega_{n}^{-1}]\\
&=2\omega_{n}^{-1}[1,\omega_{n}]\\
&=...\\
&=2\omega_{n}^{-1}...\omega_0^{-1}[1,\alpha].
\end{align*}
Since the norms of $[1,\alpha]$ and $[1,\beta]$ in $\Z\left[\sqrt{\frac{D}{4}}\right]$ are $1$ and $\frac{1}{2}$ respectively, $N(2\omega_0^{-1}...,\omega_{n}^{-1})=\pm2$. Since $N(\omega_i)$ is negative, if $N(2\omega_0^{-1}...,\omega_{n}^{-1})=2$ holds, $n$ is odd and if $N(2\omega_0^{-1}...,\omega_{n}^{-1})=-2$ holds, $n$ is even. We put $x+y\sqrt{\frac{D}{4}}=2\omega_0^{-1}...,\omega_{n}^{-1}$. When $N\left(x+y\sqrt{\frac{D}{4}}\right)=2$ holds, since $x^2-\frac{D}{4}y^2=2$, we get $\left(\frac{2}{p}\right)=(-1)^{\frac{p^2-1}{8}}=1$ and $p \equiv 7$ (mod 8). Similarly, we have $p \equiv 3$ (mod 8) when $N\left(x+y\sqrt{\frac{D}{4}}\right)=-2$. Therefore, we have
\[
l(\alpha) \equiv 
\begin{cases}
2\ & (p \equiv 3\ \mathrm{(mod\ 8)})\\
0\ &(p \equiv 7\ \mathrm{(mod\ 8)}) 
\end{cases}
\ \mathrm{(mod\ 4)}.
\]
\end{proof}

When we take $\alpha=\sqrt{\frac{D}{4}}+\left\lfloor\sqrt{\frac{D}{4}}\right\rfloor$ in the above arguments, we obtain the following corollary.

\begin{cor}
Let $p$ be a prime number such that $p \equiv 3$ (mod 4). Then we have
\[
p \equiv 3\ (\mathrm{mod\ 8}) \Rarrow l(\sqrt{p}) \equiv l(\sqrt{2p}) \equiv 2\ (\mathrm{mod\ 4}),
\]
\[
p \equiv 7\ (\mathrm{mod\ 8}) \Rarrow l(\sqrt{p}) \equiv l(\sqrt{2p}) \equiv 0\ (\mathrm{mod\ 4}).
\]
\end{cor}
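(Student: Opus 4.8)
The plan is to recognize that the number $\alpha=\sqrt{D/4}+\lfloor\sqrt{D/4}\rfloor$, with $D=4p$ or $D=8p$, is exactly the number denoted $\alpha$ in the proof of Theorem \ref{4pand8p}, and then to identify its period length with that of $\sqrt{D/4}$. Once these two identifications are in place, the corollary is just a restatement of what that proof already delivers.

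First I would check that $\alpha$ is reduced and determine its minimal polynomial. Put $m=\lfloor\sqrt{D/4}\rfloor$; from $\alpha-m=\sqrt{D/4}$ we get $\alpha^2-2m\alpha+(m^2-D/4)=0$, so $\alpha$ is the larger root of $x^2-2\lfloor\sqrt{D/4}\rfloor x-(D/4-\lfloor\sqrt{D/4}\rfloor^2)=0$, which is precisely the equation defining $\alpha$ in Theorem \ref{4pand8p}. Since $\alpha=m+\sqrt{D/4}>1$ and $\alpha'=m-\sqrt{D/4}\in(-1,0)$, the number $\alpha$ is reduced, hence $\alpha\in\mathcal{Q}(D)$, and the proof of Theorem \ref{4pand8p} already shows $l(\alpha)\equiv 2\pmod 4$ when $p\equiv 3\pmod 8$ and $l(\alpha)\equiv 0\pmod 4$ when $p\equiv 7\pmod 8$.

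Next I would relate $l(\alpha)$ to $l(\sqrt{D/4})$. Writing $N=D/4$, which equals $p$ when $D=4p$ and $2p$ when $D=8p$ and is in either case a non-square positive integer, the classical continued fraction expansion of $\sqrt N$ has the shape $[m;\overline{a_1,\dots,a_{k-1},2m}]$ with $m=\lfloor\sqrt N\rfloor$, where $k=l(\sqrt N)$ is the period length. Adding $m$ and using $\sqrt N-m=1/\overline{[a_1,\dots,a_{k-1},2m]}$ gives the purely periodic expansion $\alpha=\sqrt N+m=\overline{[2m,a_1,\dots,a_{k-1}]}$, whose minimal period is again $k$. Therefore $l(\alpha)=l(\sqrt N)=l(\sqrt{D/4})$.

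Combining the two steps, taking $D=4p$ gives $l(\sqrt p)=l(\alpha)$ and taking $D=8p$ gives $l(\sqrt{2p})=l(\alpha)$, so the congruence for $l(\alpha)$ obtained from Theorem \ref{4pand8p} yields the stated congruences for $l(\sqrt p)$ and $l(\sqrt{2p})$ simultaneously. There is no serious obstacle here; the only point deserving a word of care is the last identification, namely that the minimal period of the purely periodic expansion of $\sqrt N+\lfloor\sqrt N\rfloor$ coincides with the classical period of $\sqrt N$, which is a standard fact about continued fractions of quadratic surds.
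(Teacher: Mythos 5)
Your proposal is correct and is essentially the paper's own argument: the paper derives the corollary by simply taking $\alpha=\sqrt{D/4}+\lfloor\sqrt{D/4}\rfloor$ in the proof of Theorem \ref{4pand8p}, which is exactly your identification, and your remaining step (that the purely periodic expansion of $\sqrt{N}+\lfloor\sqrt{N}\rfloor$ has the same minimal period length as $\sqrt{N}$) is the standard fact the paper leaves implicit. No issues.
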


\begin{thm}\label{9p}
Let $p$ be a prime number such that $p \equiv 1$ (mod 4). Then we have
\[
p=5\ \mathrm{or}\ p \equiv 1\ \mathrm{(mod\ 3)} \Rarrow \kappa(9p) \equiv 2\ \mathrm{(mod\ 4)},
\]
\[
p \equiv 2\ \mathrm{(mod\ 3)}\ (p \not=5)\ \Rarrow \kappa(9p) \equiv 0\ \mathrm{(mod\ 4)}.
\]
\end{thm}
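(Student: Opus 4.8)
The plan is to follow the proof of Theorem~\ref{4pand8p}. First, the structural input. For a prime $p\equiv1\pmod4$ one has $N(\varepsilon_p)=-1$ and $h(p)$ odd (the narrow genus number of $\Q(\sqrt p)$ is $1$). Writing $\varepsilon_{9p}=\varepsilon_p^{\mu}$, an easy computation in $(\Z[\tfrac{p+\sqrt p}{2}]/3)^{\times}$ shows $\mu=2$ when $3$ splits in $\Q(\sqrt p)$ (i.e.\ $p\equiv1\pmod3$) and $\mu=4$ when $3$ is inert ($p\equiv2\pmod3$); in either case Proposition~\ref{classnumber} gives $h(9p)=h(p)$, still odd, and $N(\varepsilon_{9p})=N(\varepsilon_p)^{\mu}=1$. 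Since the fundamental automorphism of $[1,\alpha]$ is the product of the matrices $\left(\begin{smallmatrix}a_i&1\\1&0\end{smallmatrix}\right)$, of determinant $(-1)^{l(\alpha)}=N(\varepsilon_{9p})=1$, every $\alpha\in\mathcal Q(9p)$ has $l(\alpha)$ even.

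By Proposition~\ref{equation}, $\kappa(9p)=\sum_{[\alpha]\in\mathcal R(9p)}l(\alpha)$. By Lemma~\ref{equivalent}, the classes with $\alpha\not\sim-1/\alpha'$ split into pairs $\{[\gamma_i],[-1/\gamma_i']\}$, each contributing $l(\gamma_i)+l(-1/\gamma_i')=2l(\gamma_i)\equiv0\pmod4$ (reversing the period gives $l(-1/\gamma_i')=l(\gamma_i)$, even), and since $h(9p)$ is odd the only class fixed by $[\alpha]\mapsto-1/\alpha'$ is the principal one. Hence $\kappa(9p)\equiv l(\alpha)\pmod4$, where $\alpha=\tfrac{B+\sqrt{9p}}{2}$, with $B$ the odd integer in $(\sqrt{9p}-2,\sqrt{9p})$, is the principal reduced number (minimal polynomial $x^{2}-Bx-\tfrac14(9p-B^{2})$). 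By Lemma~\ref{equivalent} (the case $N(\varepsilon_{9p})=1$), $\alpha=\overline{[c_0,c_1,\dots,c_n,c_{n+1},c_n,\dots,c_1]}$, so $l(\alpha)=2n+2$ and it remains only to determine the parity of $n$.

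Now imitate Theorem~\ref{4pand8p}. Running the continued fraction on $\alpha$ to the palindrome centre $\beta=\alpha_{n+1}$ gives $[1,\beta]=\lambda[1,\alpha]$ with $\lambda$ a product of $n+1$ factors of negative norm, so $\operatorname{sgn}N(\lambda)=(-1)^{n+1}$. If $(a',b',c')$ is the form of $\beta$, then $a'\mid b'$ by Lemma~\ref{equivalent}, whence $a'\mid 9p$; primitivity excludes $a'\in\{3,3p,9p\}$, reducedness excludes $a'=p$ unless $p=5$, and $a'=1$ is impossible, so $a'=9$ (with $\beta=\tfrac{3+\sqrt p}{6}$) when $p>9$ and $a'=p=5$ (with $\beta=\tfrac{5+\sqrt{45}}{10}$) when $p=5$ --- this dichotomy is the source of the exceptional value $p=5$. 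Since $h(9p)$ is odd, the norm-$a'$ ideal $\mathfrak a=a'[1,\beta]$ is principal, say $\mathfrak a=(\lambda_0)$ with $\lambda_0=a'\lambda\in\Z[\tfrac{9p+\sqrt{9p}}{2}]$, $N(\lambda_0)=\pm a'$ and $\operatorname{sgn}N(\lambda_0)=(-1)^{n+1}$; expanding $\lambda_0$ in the basis $\{1,\tfrac{3+3\sqrt p}{2}\}$ and clearing denominators converts $N(\lambda_0)=\pm a'$ into a Pell equation $X^{2}-9p\,Y^{2}=\pm4a'$, and the sign that is solvable fixes the parity of $n$.

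It remains to read off that sign. For $p=5$ the equation is $X^{2}-45Y^{2}=\pm20$, which is insoluble modulo $9$ for the sign $+20$, so $N(\lambda_0)=-5$, $n$ is even and $l(\alpha)\equiv2\pmod4$. For $p>9$ one has $\mathfrak a=3\,[3,\tfrac{3+\sqrt p}{2}]$, and $[3,\tfrac{3+\sqrt p}{2}]=\varepsilon_p^{k_0}\Z[\tfrac{9p+\sqrt{9p}}{2}]$ for a unique $k_0\not\equiv0\pmod\mu$ (this ideal is proper of norm $1$ and trivializes over $\Z[\tfrac{p+\sqrt p}{2}]$ because $\tfrac{3+\sqrt p}{2}$ is prime to $3$), whence $N(\lambda_0)=9\,N(\varepsilon_p)^{k_0}=9(-1)^{k_0}$. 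When $3$ splits, $\mu=2$ forces $k_0=1$, so $N(\lambda_0)=-9$ and $l(\alpha)\equiv2\pmod4$; when $3$ is inert, $\mu=4$, and a short computation in $\Z[\tfrac{p+\sqrt p}{2}]/3\cong\mathbb{F}_9$ --- using that $\varepsilon_p^{2}$ and $1+\tfrac{1+\sqrt p}{2}$ generate the same norm-one cyclic subgroup of $\mathbb{F}_9^{\times}$ --- gives $k_0=2$, so $N(\lambda_0)=9$ and $l(\alpha)\equiv0\pmod4$. Since $p\equiv1\pmod3$ is equivalent to $3$ splitting, this is exactly the asserted congruence, with $p=5$ the stated exception. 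The main obstacle is the last step, namely pinning down $k_0$ in the $3$-inert case (equivalently, showing the central ideal's generator has positive norm); the determinant/parity bookkeeping, the pairing, and the reciprocity computation are routine and parallel Theorem~\ref{4pand8p} closely.
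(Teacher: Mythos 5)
Your proposal is correct and follows essentially the same route as the paper's proof (which is itself modelled on Theorem \ref{4pand8p}): reduce $\kappa(9p)$ modulo $4$ to the period length of the principal cycle, locate the second palindromic centre $\beta$ (leading coefficient $9$, degenerating to $5$ when $p=5$), and read off the parity of the half-period from the sign of the norm $\pm 9$ of the element carrying $[1,\alpha]$ to $9[1,\beta]$. The only real divergence is the last step — the paper fixes that sign by a direct congruence on $x^2-9py^2=\pm36$ modulo $9$, while you derive it from the unit index $\mu\in\{2,4\}$ and the class of $1+\tfrac{1+\sqrt p}{2}$ in $(\mathcal O_K/3)^{\times}$ — and your write-up additionally makes explicit the reduction ($h(9p)=h(p)$ odd, pairing of the non-ambiguous classes) that the paper leaves implicit.
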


\begin{proof}
Since $\kappa(45)=2$ holds, in the following, we may assume $p \not= 5$.
Let $\alpha$ and $\beta$ be the largest solution of equations
\[
x^2-Bx-\frac{1}{4}(9p-B^2)=0 \left(B=
\begin{cases}
\lfloor3\sqrt{p}\rfloor\ &(\lfloor3\sqrt{p}\rfloor \mathrm{:odd})\\
\lfloor3\sqrt{p}\rfloor-1\ &(\lfloor3\sqrt{p}\rfloor \mathrm{:even})
\end{cases}
\right)
\]
and
\begin{spacing}{1.3}
\begin{center}
$9x^2-9B'x-\frac{1}{4}(p-9{B'}^2)=0 \left(B'=
\begin{cases}
\left\lfloor\frac{\sqrt{p}}{3}\right\rfloor\ &\left(\left\lfloor\frac{\sqrt{p}}{3}\right\rfloor \mathrm{:odd}\right)\\
\left\lfloor\frac{\sqrt{p}}{3}\right\rfloor-1\ &\left(\left\lfloor\frac{\sqrt{p}}{3}\right\rfloor \mathrm{:even}\right)
\end{cases}
\right)$
\end{center}
\end{spacing}
\noindent respectively. It suffices to prove that 
\[
l(\alpha) \equiv 
\begin{cases}
2\ &(p \equiv 1\ \mathrm{(mod\ 3}))\\
0\ & (p \equiv 2\ \mathrm{(mod\ 3)})
\end{cases}
\ \mathrm{(mod\ 4)}.
\]\par
Let
\[
\alpha=\overline{[a_0,a_1,...,a_{n},a_{n+1},a_{n},...,a_1]}
\]
and
\[
\beta = \overline{[a_{n+1},a_{n},...,a_1,a_0,a_1,...,a_{n-1},a_n]}
\]
be continued fraction expansions of $\alpha$ and $\beta$ respectively. Then we get
\begin{align*}
9[1,\beta]
&=9[1,\omega_{n}^{-1}]\\
&=9\omega_{n}^{-1}[1,\omega_{n-1}]\\
&=...\\
&=9\omega_{n}^{-1}...\omega_0^{-1}[1,\alpha].
\end{align*}
Since the norms of $[1,\alpha]$ and $[1,\beta]$ in $\Z\left[\frac{1+3\sqrt{p}}{2}\right]$are $1$ and $\frac{1}{9}$ respectively, $N(9\omega_0^{-1}...,\omega_{n}^{-1})=\pm9$. When $N\left(\frac{x+3y\sqrt{p}}{2}\right)=9$ holds, we get $x^2-9py^2=36$. By $\left[9,\frac{9+3\sqrt{p}}{2}\right]=\left[\frac{x+3y\sqrt{p}}{2},\frac{x+9yp+3(x+y)\sqrt{p}}{4}\right]$, $y$ is odd and we have $p \equiv 2$ (mod 3). Similarly, we have $p \equiv 1$ (mod 3) when $N\left(\frac{x+y\sqrt{3p}}{2}\right)=-3$. Therefore, 
\[
l(\alpha) \equiv 
\begin{cases}
2\ &(p \equiv 1\ \mathrm{(mod\ 3}))\\
0\ & (p \equiv 2\ \mathrm{(mod\ 3)})
\end{cases}
\ \mathrm{(mod\ 4)}
\]
holds.
\end{proof}

Since $l\left(\frac{1+\sqrt{45}}{2}\right)=6$ holds, we get the following corollary. 

\begin{cor}
Let $p$ be a prime number such that $p \equiv 1$ (mod 4). Then we have
\[
p=5\  \mathrm{or}\  p \equiv 1\ (\mathrm{mod\ 3}) \Rarrow l\left(\frac{1+\sqrt{3p}}{2}\right) \equiv 2\ (\mathrm{mod\ 4}),
\]
\[
p \equiv 2\ (\mathrm{mod\ 3}) \ (p \not=5)\Rarrow l\left(\frac{1+\sqrt{3p}}{2}\right) \equiv 0\ (\mathrm{mod\ 4}).
\]
\end{cor}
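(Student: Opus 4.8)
The corollary is a transcription of the proof of Theorem~\ref{9p} in terms of a single named continued fraction, in the same spirit as the corollary following Theorem~\ref{4pand8p}, which transcribed that proof with $\alpha=\sqrt{D/4}+\lfloor\sqrt{D/4}\rfloor$. The relevant number here is $\omega\coloneqq\tfrac{1+\sqrt{9p}}{2}=\tfrac{1+3\sqrt p}{2}$ (for $p=5$ this is $\tfrac{1+\sqrt{45}}{2}$, the number appearing in the preceding remark), and $l(\omega)$ is read, as with $l(\sqrt p)$ in the earlier corollary, as the period length of the eventually periodic continued fraction of $\omega$.

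First I would record that $\omega$ has discriminant $9p$: since $p\equiv1\pmod4$ we have $9p\equiv1\pmod4$, so the minimal polynomial of $\omega$ is $x^2-x+\tfrac{1-9p}{4}$, which has coprime integer coefficients and discriminant $9p$; thus the eventual period of the continued fraction of $\omega$ is the period of a reduced number in $\mathcal{Q}(9p)$. Next I would recall that the proof of Theorem~\ref{9p} (for $p\neq5$) takes $\alpha$ to be the largest root of $x^2-Bx-\tfrac14(9p-B^2)=0$, that is $\alpha=\tfrac{B+\sqrt{9p}}{2}$, where $B$ is whichever of $\lfloor3\sqrt p\rfloor$ and $\lfloor3\sqrt p\rfloor-1$ is odd. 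Since $B$ is odd, $\tfrac{B-1}{2}\in\Z$ and $\alpha=\omega+\tfrac{B-1}{2}$ is an integer translate of $\omega$; translation by an integer is the action of a matrix in $\mathrm{SL}_2(\Z)\subseteq\mathrm{GL}_2(\Z)$, so $\omega\sim\alpha$, the periods of the continued fractions of $\omega$ and $\alpha$ are cyclically equivalent, and therefore $l(\omega)=l(\alpha)$.

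Finally I would quote what is actually proved inside Theorem~\ref{9p}: for $p\neq5$ one has $l(\alpha)\equiv2\pmod4$ if $p\equiv1\pmod3$ and $l(\alpha)\equiv0\pmod4$ if $p\equiv2\pmod3$; by the previous step the same congruences hold for $l(\omega)$. For the excluded prime $p=5$---which satisfies $p\equiv2\pmod3$ but is grouped in the statement with the conclusion $l(\omega)\equiv2\pmod4$---the value is read from the preceding remark, $l\!\left(\tfrac{1+\sqrt{45}}{2}\right)=6\equiv2\pmod4$. Putting the two cases together gives the corollary.

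I do not expect a substantial obstacle here: the only point needing care is the verification that $B$ can always be chosen odd, so that $\omega$ and the reduced number $\alpha$ of Theorem~\ref{9p}'s proof differ by an \emph{integer} translation and are hence $\mathrm{GL}_2(\Z)$-equivalent. Granting that, the corollary follows immediately from the proof of Theorem~\ref{9p}, with $p=5$ treated separately by direct computation exactly as there.
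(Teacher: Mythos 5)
Your proposal is correct and follows exactly the route the paper intends: the paper gives no written proof of this corollary beyond the remark $l\bigl(\tfrac{1+\sqrt{45}}{2}\bigr)=6$, the understood argument being precisely that $\tfrac{1+3\sqrt p}{2}$ is an integer translate of the reduced number $\alpha=\tfrac{B+3\sqrt p}{2}$ (with $B$ the odd choice among $\lfloor 3\sqrt p\rfloor$ and $\lfloor 3\sqrt p\rfloor-1$) appearing in the proof of Theorem \ref{9p}, so the congruences for $l(\alpha)$ established there transfer to $l\bigl(\tfrac{1+3\sqrt p}{2}\bigr)$, with $p=5$ settled by the direct computation. Your only additional care --- checking that $B$ is odd so the translation is integral, and reading $l$ as the eventual period length --- is exactly the right bookkeeping.
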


\begin{thm}\label{16p}
Let $p>2$ be a prime number. Then we have
\[
p=3\ \mathrm{or}\ p \equiv 1\ \mathrm{(mod\ 4)} \Rarrow \kappa(16p) \equiv 2\ \mathrm{(mod\ 4)},
\]
\[
p \equiv 3\ \mathrm{(mod\ 4)} \ (p \not=3)\ \Rarrow \kappa(16p) \equiv 0\ \mathrm{(mod\ 4)}.
\]
\end{thm}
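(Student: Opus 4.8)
The plan is to follow the pattern of the proofs of Theorems~\ref{4pand8p} and~\ref{9p}. Since the order of discriminant $16p$ is $\mathbb Z[2\sqrt p]$ for every odd prime $p$, I would first note that $N(\varepsilon_{16p})=1$ in all cases: a unit $x+2y\sqrt p$ of norm $-1$ would force $x^2\equiv-1\pmod 4$. Consequently the principal cycle has the symmetric shape with a middle partial quotient, and I take $\alpha$ to be the largest root of $x^2-2\lfloor\sqrt{4p}\rfloor x-(4p-\lfloor\sqrt{4p}\rfloor^2)=0$, that is $\alpha=\lfloor2\sqrt p\rfloor+2\sqrt p=\overline{[a_0,a_1,\dots,a_n,a_{n+1},a_n,\dots,a_1]}$, so that $l(\alpha)=2n+2$ and the statement reduces to the parity of $n$. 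Following the reductions in Theorems~\ref{4pand8p} and~\ref{9p} (which rest on pairing the non-principal classes of $\mathcal R(16p)$ under $\gamma\mapsto-1/\gamma'$, as in the proof of Theorem~\ref{8p}, so that these contribute a multiple of $4$), it then suffices to prove
\[
l(\alpha)\equiv\begin{cases}2\ &(p=3\ \mathrm{or}\ p\equiv1\ \mathrm{(mod\ 4)}),\\0\ &(p\equiv3\ \mathrm{(mod\ 4)},\ p\not=3),\end{cases}\ \mathrm{(mod\ 4)}.
\]

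Next, assuming $p\not=3$, I would introduce $\beta$, the largest root of $4x^2-4b'x-(p-{b'}^2)=0$, where $b'$ is the unique even integer with $\sqrt p-2<b'<\sqrt p$. One checks that $\beta=\frac{b'+\sqrt p}{2}$ is then reduced of discriminant $16p$, that its continued fraction is the cyclic rotation $\overline{[a_{n+1},a_n,\dots,a_1,a_0,a_1,\dots,a_n]}$ of that of $\alpha$, and that $[1,\beta]$ has norm $\frac14$ in $\mathbb Z[2\sqrt p]$. Setting $\omega_i=\overline{[a_i,\dots,a_1,a_0,a_1,\dots,a_n,a_{n+1},a_n,\dots,a_{i+1}]}$ and telescoping along the cycle of $\alpha$ yields $4[1,\beta]=4\,\omega_n^{-1}\cdots\omega_0^{-1}[1,\alpha]$, so $\lambda\coloneqq4\,\omega_0^{-1}\cdots\omega_n^{-1}$ lies in $\mathbb Z[2\sqrt p]$ with $|N(\lambda)|=4$; writing $\lambda=x+2y\sqrt p$ this reads $x^2-4py^2=\pm4$. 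Since every $N(\omega_i)$ is negative, $N(\lambda)$ has sign $(-1)^{n+1}$, and hence $N(\lambda)=-4$ if and only if $n$ is even, if and only if $l(\alpha)\equiv2\pmod 4$.

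It then remains to fix the sign. If $p\equiv3\pmod 4$: from $x^2-4py^2=-4$ (so $x=2x'$ and ${x'}^2-py^2=-1$) one would obtain ${x'}^2+y^2\equiv3\pmod 4$, which is impossible; hence $N(\lambda)=+4$, $n$ is odd, and $l(\alpha)\equiv0\pmod 4$. If $p\equiv1\pmod 4$: as in the step ``$y$ is odd'' in the proof of Theorem~\ref{9p}, the module identity $4[1,\beta]=\lambda\,\mathbb Z[2\sqrt p]$ together with $b'$ being even forces $4\mid x$; writing $x=4x_1$ in $x^2-4py^2=\pm4$ gives $4x_1^2-py^2=\pm1$, and the value $+1$ would give $py^2\equiv3\pmod 4$, impossible when $p\equiv1\pmod 4$, so $N(\lambda)=-4$, $n$ is even, and $l(\alpha)\equiv2\pmod 4$. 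Finally $p=3$ is genuinely exceptional: there $b'=0$, so $\beta=\sqrt3/2<1$ is not reduced and the argument above breaks down; one instead computes directly that $\alpha=3+\sqrt{12}=\overline{[6,2]}$, whence $l(\alpha)=2\equiv2\pmod 4$.

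The part I expect to be most delicate is the case $p\equiv1\pmod 4$: deriving $4\mid x$ from the module identity $4[1,\beta]=\lambda\,\mathbb Z[2\sqrt p]$ for the non-maximal order $\mathbb Z[2\sqrt p]$ (the analogue of the ``$y$ is odd'' step of Theorem~\ref{9p}), together with making the reduction to $l(\alpha)\bmod 4$ rigorous --- i.e.\ verifying that the non-principal classes of $\mathcal R(16p)$ contribute a multiple of $4$ in the presence of a conductor --- and pinning down exactly why $p=3$, and only $p=3$ among the primes $\equiv3\pmod 4$, must be singled out.
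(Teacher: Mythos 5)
Your proposal follows essentially the same route as the paper: the same $\alpha$ and $\beta$, the same telescoping identity $4[1,\beta]=4\omega_n^{-1}\cdots\omega_0^{-1}[1,\alpha]$ producing an element $x+2y\sqrt p$ of norm $\pm4$ whose sign is $(-1)^{n+1}$, and the same use of the module identity $[4,2\sqrt p]=[x+2y\sqrt p,\,4py+2x\sqrt p]$ to force $4\mid x$ and read off $p\bmod 4$. The only cosmetic differences are that you invoke the module identity just for $p\equiv1\pmod 4$ and dispatch $p\equiv3\pmod 4$ by a direct two-squares congruence (the paper derives $4\mid x$ uniformly), while your handling of $p=3$ by direct computation and your reduction to $l(\alpha)\bmod 4$ (which the paper also leaves implicit) match the paper.
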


\begin{proof}
Since $\kappa(48)=2$ holds, in the following, we may suppose $p \not=3$.\\
Let $\alpha$ and $\beta$ be the largest solutions of equations
\[
x^2-2\lfloor2\sqrt{p}\rfloor x-(4p-\lfloor2\sqrt{p}\rfloor^2)=0
\]
and
\[
4x^2-8\left\lfloor\sqrt{\frac{p}{2}}\right\rfloor x-\left(p-4\left\lfloor\sqrt{\frac{p}{2}}\right\rfloor^2\right)=0
\]
respectively. It suffices to prove that
\[
l(\alpha) \equiv 
\begin{cases}
2\ &(p \equiv 1\ \mathrm{(mod\ 4}))\\
0\ & (p \equiv 3\ \mathrm{(mod\ 4)})
\end{cases}
\ \mathrm{(mod\ 4)}.
\]\par
Let
\[
\alpha=\overline{[a_0,a_1,...,a_{n},a_{n+1},a_{n},...,a_1]}
\]
and
\[
\beta = \overline{[a_{n+1},a_{n},...,a_1,a_0,a_1,...,a_{n-1},a_n]}
\]
be continued fraction expansions of $\alpha$ and $\beta$ respectively. Then we get
\begin{align*}
4[1,\beta]
&=4[1,\omega_{n}^{-1}]\\
&=4\omega_{n}^{-1}[1,\omega_{n-1}]\\
&=...\\
&=4\omega_{n}^{-1}...\omega_0^{-1}[1,\alpha].
\end{align*}
Since the norms of $[1,\alpha]$ and $[1,\beta]$ in $\Z[2\sqrt{p}]$ are $1$ and $\frac{1}{4}$ respectively, $N(4\omega_0^{-1}...,\omega_{n}^{-1})=\pm4$. When $N(x+2y\sqrt{p})=4$ holds, since $[4,2\sqrt{p}]=[x+2y\sqrt{p},4py+2x\sqrt{p}]$, $x \equiv 0$ (mod 4) holds. Therfore, we get $p \equiv 3$ (mod 4). Similarly, we have $p \equiv 1$ (mod 4) when $N(x+2y\sqrt{p})=-4$. Therefore, 
\[
l(\alpha) \equiv 
\begin{cases}
2\ &(p \equiv 1\ \mathrm{(mod\ 4}))\\
0\ & (p \equiv 3\ \mathrm{(mod\ 4)})
\end{cases}
\ \mathrm{(mod\ 4)}
\]
holds.
\end{proof}

Since $l(2\sqrt{3})=2$ holds, we get the next corollary.
\begin{cor}
Let $p>2$ be a prime number. Then we have
\[
p=3\  \mathrm{or}\  p \equiv 1\ (\mathrm{mod\ 4}) \Rarrow l(2\sqrt{p}) \equiv 2\ (\mathrm{mod\ 4}),
\]
\[
p \equiv 3\ (\mathrm{mod\ 4}) \ (p \not=3)\Rarrow l(2\sqrt{p}) \equiv 0\ (\mathrm{mod\ 4}).
\]
\end{cor}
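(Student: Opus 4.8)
The plan is to obtain this corollary as an immediate consequence of Theorem~\ref{16p}, or rather of the argument used in its proof. The key observation is that $2\sqrt{p}=\sqrt{4p}$, so $l(2\sqrt{p})$ is by definition the period length of the ordinary continued fraction expansion of $\sqrt{4p}$. Writing $\sqrt{4p}=[a_0;\overline{a_1,\dots,a_{l-1},2a_0}]$ with $a_0=\lfloor 2\sqrt{p}\rfloor$ (the classical shape of the expansion of $\sqrt{N}$ for $N$ non-square), the shifted number $\alpha:=\lfloor 2\sqrt{p}\rfloor+2\sqrt{p}$ has the purely periodic expansion $\overline{[2a_0,a_1,\dots,a_{l-1}]}$, whose minimal period length is again $l(2\sqrt{p})$. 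Hence it suffices to determine $l(\alpha)$ modulo $4$.

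Now $\alpha$ is precisely the number appearing in the proof of Theorem~\ref{16p}: from $\alpha-\lfloor 2\sqrt{p}\rfloor=2\sqrt{p}$ one gets $\alpha^{2}-2\lfloor 2\sqrt{p}\rfloor\alpha-(4p-\lfloor 2\sqrt{p}\rfloor^{2})=0$, and since this polynomial is monic its discriminant is $4\lfloor 2\sqrt{p}\rfloor^{2}+4(4p-\lfloor 2\sqrt{p}\rfloor^{2})=16p$, so $\alpha\in\mathcal{Q}(16p)$ (and $\alpha$ is reduced since $p$, being prime, is not a perfect square). For $p\neq 3$ the proof of Theorem~\ref{16p} establishes exactly that $l(\alpha)\equiv 2\pmod{4}$ when $p\equiv 1\pmod{4}$ and $l(\alpha)\equiv 0\pmod{4}$ when $p\equiv 3\pmod{4}$; combining this with the previous paragraph yields the asserted congruences for $l(2\sqrt{p})$ for all primes $p>3$. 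The case $p=3$ is handled directly: $2\sqrt{3}=\sqrt{12}=[3;\overline{2,6}]$, so $l(2\sqrt{3})=2\equiv 2\pmod{4}$, consistent with the ``$p=3$'' branch.

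There is essentially no obstacle here: the only points requiring care are the invocation of the classical fact that the period of the continued fraction of $\sqrt{N}$ equals the minimal period of the purely periodic expansion of $\lfloor\sqrt{N}\rfloor+\sqrt{N}$, and the verification that $\alpha=\lfloor 2\sqrt{p}\rfloor+2\sqrt{p}$ is the same quadratic irrational of discriminant $16p$ treated in the proof of Theorem~\ref{16p}. Once these are in place, the congruences for $l(2\sqrt{p})$ are read off that proof verbatim; note in particular that only the computation of $l(\alpha)$ is needed, not the reduction of $\kappa(16p)$ to $l(\alpha)$.
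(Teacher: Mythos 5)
Your proposal is correct and follows essentially the same route as the paper: the paper deduces the corollary by noting that the $\alpha$ in the proof of Theorem \ref{16p} is exactly $\lfloor 2\sqrt{p}\rfloor+2\sqrt{p}$, so its period length equals $l(2\sqrt{p})$, and handles $p=3$ by the direct computation $l(2\sqrt{3})=2$. Your explicit verification of the discriminant and of the purely periodic shift is just a spelled-out version of the paper's one-line remark.
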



\begin{thm}\label{calpq}
Let $p$ and $q$ ($p<q$) be prime numbers such that $p \equiv q \equiv 3$ (mod 4). Then we have 
\[
\kappa(pq) \equiv 1+\left(\frac{q}{p}\right)\ \mathrm{(mod\ 4)}.
\]
\end{thm}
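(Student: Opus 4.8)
The plan is to run the argument exactly as in the proof of Theorem \ref{16p}. By Proposition \ref{equation}, $\kappa(pq)=\sum_{[\gamma]\in\mathcal{R}(pq)}l(\gamma)$. Since $p\equiv 3\pmod 4$ divides $pq$, the number $-1$ is not a norm from $\mathbb{Q}(\sqrt{pq})$, so $N(\varepsilon_{pq})=1$; hence every period length $l(\gamma)$ is even (the product over one period of the matrices $\left(\begin{matrix}a_i&1\\1&0\end{matrix}\right)$ realizes $\varepsilon_{pq}$ and has determinant $(-1)^{l(\gamma)}=N(\varepsilon_{pq})=1$). The involution $[\gamma]\mapsto[-1/\gamma']$ on $\mathcal{R}(pq)$ preserves $l$ (the period is reversed), and by genus theory the wide class group of discriminant $pq$ with $p\equiv q\equiv 3\pmod 4$ has only the principal ambiguous class (equivalently $h(pq)$ is odd; the two ambiguous narrow classes both become principal over the wide class group). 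So the non-principal classes contribute $2l(\gamma)\equiv 0\pmod 4$ in conjugate pairs, and $\kappa(pq)\equiv l(\alpha)\pmod 4$, where $\alpha=\frac{B+\sqrt{pq}}{2}$ generates the principal class, with $B=\lfloor\sqrt{pq}\rfloor$ or $\lfloor\sqrt{pq}\rfloor-1$ chosen odd --- the same $\alpha$ as in Theorem \ref{pq}. By Lemma \ref{equivalent} (the case $a\mid b$, since $a=1$), $\alpha=\overline{[a_0,a_1,\dots,a_n,a_{n+1},a_n,\dots,a_1]}$, so $l(\alpha)=2n+2$ and it remains to determine $n\bmod 2$.

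To find $n\bmod 2$, I would introduce $\beta=\frac{pB'+\sqrt{pq}}{2p}$, the reduced root of $px^2-pB'x-\tfrac14(q-pB'^2)=0$ with $B'\in\{\lfloor\sqrt{q/p}\rfloor,\lfloor\sqrt{q/p}\rfloor-1\}$ odd (again the $\beta$ of Theorem \ref{pq}), and check that $\beta=\overline{[a_{n+1},a_n,\dots,a_1,a_0,a_1,\dots,a_{n-1},a_n]}$, a cyclic rotation of the reversed period of $\alpha$, so that the complete quotients of $\beta$ run through the cyclically rotated complete quotients $\omega_0=\alpha,\omega_1,\dots$ of $\alpha$ and $\alpha$ is reached from $\beta$ after $n+1$ steps. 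Running the standard relation (each step multiplies the module $[1,\cdot]$ by the next complete quotient) along this chain gives $p[1,\beta]=p\,\omega_n^{-1}\cdots\omega_0^{-1}[1,\alpha]$, exactly as in the proof of Theorem \ref{16p}. Since $B$ is odd, $[1,\alpha]=\mathbb{Z}\!\left[\tfrac{1+\sqrt{pq}}{2}\right]$ has norm $1$; since $p\mid pq$, the ideal $p[1,\beta]=(p,\sqrt{pq})$ is the ramified prime $\mathfrak{p}$ over $p$, of norm $p$. Hence $\eta:=p\,\omega_n^{-1}\cdots\omega_0^{-1}$ generates $\mathfrak{p}$, so $N(\eta)=\pm p$; and each $\omega_i$ being reduced has $N(\omega_i)<0$, so $N(\eta)$ has sign $(-1)^{n+1}$, that is $N(\eta)=(-1)^{n+1}p$.

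Finally I would read off the sign. As $N(\varepsilon_{pq})=1$, exactly one of $p,-p$ is a norm from $\mathbb{Q}(\sqrt{pq})$; writing $\eta=\frac{x+y\sqrt{pq}}{2}$ gives $x^2-pqy^2=4N(\eta)=\pm4p$. Reducing modulo $q$: the $+$ case forces $\left(\frac{p}{q}\right)=1$, and the $-$ case forces $\left(\frac{-p}{q}\right)=1$, hence $\left(\frac{p}{q}\right)=-1$ since $q\equiv 3\pmod 4$. By quadratic reciprocity $\left(\frac{p}{q}\right)\left(\frac{q}{p}\right)=-1$, so $N(\eta)=p\iff\left(\frac{q}{p}\right)=-1$ and $N(\eta)=-p\iff\left(\frac{q}{p}\right)=1$. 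Combined with $N(\eta)=(-1)^{n+1}p$, we get that $n$ is odd exactly when $\left(\frac{q}{p}\right)=-1$; therefore $l(\alpha)=2(n+1)\equiv 1+\left(\frac{q}{p}\right)\pmod 4$ and $\kappa(pq)\equiv 1+\left(\frac{q}{p}\right)\pmod 4$.

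The main obstacle is the first reduction $\kappa(pq)\equiv l(\alpha)\pmod 4$: one must pin down that discriminant $pq$ (with $p\equiv q\equiv 3\pmod 4$) has a \emph{unique} ambiguous $\mathrm{GL}_2$-class, so that the non-principal classes genuinely cancel in pairs modulo $4$. The cleanest way within the paper's framework is to note directly that $\beta$ --- whose defining form has leading coefficient $p$, i.e.\ which represents $\mathfrak{p}$ --- is $\mathrm{GL}_2$-equivalent to $\alpha$ (its period is a cyclic rotation of the reversed, palindromic period of $\alpha$), so $\mathfrak{p}$, and likewise the prime over $q$, lie in the principal class, which forces every ambiguous class to be principal. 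The rest is careful bookkeeping: that $\alpha$ has exactly the palindromic shape claimed rather than the $a=-c$ alternative, that the chain $\beta\to\alpha$ has length $n+1$ so the exponent in $(-1)^{n+1}$ is correct, and that the direction of the reciprocity step is the one stated.
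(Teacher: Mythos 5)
Your proposal is correct and follows essentially the same route as the paper: the same pair $\alpha,\beta$, the same chain $p[1,\beta]=p\,\omega_n^{-1}\cdots\omega_0^{-1}[1,\alpha]$ giving an element of norm $(-1)^{n+1}p$, and the same Legendre-symbol evaluation (you reduce mod $q$ and invoke reciprocity where the paper reduces mod $p$ to get $\left(\frac{-q}{p}\right)=1$, which is an immaterial variation). You also make explicit the reduction $\kappa(pq)\equiv l(\alpha)\pmod 4$ via the oddness of $h(pq)$ and the evenness of all period lengths, which the paper leaves implicit behind ``it suffices to prove.''
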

\begin{proof}
Let $\alpha$ and $\beta$ be the largest solutions of equations
\[
x^2-Bx-\frac{1}{4}(pq-B^2)=0 \left(B=
\begin{cases}
\lfloor\sqrt{pq}\rfloor\ &(\lfloor\sqrt{pq}\rfloor \mathrm{:odd})\\
\lfloor\sqrt{pq}\rfloor-1\ &(\lfloor\sqrt{pq}\rfloor \mathrm{:even})
\end{cases}
\right)
\]
and
\[
px^2-pB'x-\frac{1}{4}(q-p{B'}^2)=0\left(B'=
\begin{cases}
\left\lfloor\sqrt{\frac{q}{p}}\right\rfloor\ &\left(\left\lfloor\sqrt{\frac{q}{p}}\right\rfloor \mathrm{:odd}\right)\\
\left\lfloor\sqrt{\frac{q}{p}}\right\rfloor-1\ &\left(\left\lfloor\sqrt{\frac{q}{p}}\right\rfloor \mathrm{:even}\right)
\end{cases}
\right)
\]
respectively.
It suffices to prove that
\[
l(\alpha) \equiv 1+\left(\frac{q}{p}\right)\ \mathrm{(mod\ 4)}.
\]\par
Let
\[
\alpha=\overline{[a_0,a_1,...,a_{n},a_{n+1},a_{n},...,a_1]}
\]
and
\[
\beta = \overline{[a_{n+1},a_{n},...,a_1,a_0,a_1,...,a_{n-1},a_n]}
\]
be continued fraction expansions of $\alpha$ and $\beta$ respectively. Then we get
\begin{align*}
p[1,\beta]
&=p[1,\omega_{n-1}^{-1}]\\
&=p\omega_{n-1}[1,\omega_{n-1}]\\
&=...\\
&=p\omega_{n-1}...\omega_0[1,\alpha].
\end{align*}
Since the norms of $[1,\alpha]$ and $[1,\beta]$ in $\Z\left[\frac{1+\sqrt{pq}}{2}\right]$ are $1$ and $\frac{1}{p}$ respectively, $N(p\omega_0^{-1}...,\omega_{n}^{-1})=\pm p$. When $N\left(\frac{x+y\sqrt{pq}}{2}\right)=p$ holds, since $x^2-pqy^2=4p$, $\left(\frac{-q}{p}\right)=1$ and  $\left(\frac{q}{p}\right)=-1$ holds. Similarly, we have $\left(\frac{q}{p}\right)=1$ when $N\left(\frac{x+y\sqrt{pq}}{2}\right)=-p$. Therefore, 
\[
l(\alpha) \equiv 1+\left(\frac{q}{p}\right)\ \mathrm{(mod\ 4)}
\]
holds.\\
\end{proof}

\begin{cor}
Let $p$ and $q$ ($p<q$) be prime numbers such that $p \equiv q \equiv 3$ (mod 4). Then we have
\[
l\left(\frac{1+\sqrt{pq}}{2}\right) \equiv 1+\left(\frac{q}{p}\right)\ \mathrm{(mod\ 4)}.
\]
\end{cor}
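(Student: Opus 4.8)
The plan is to read this corollary off from Theorem~\ref{calpq}: I will show that $\frac{1+\sqrt{pq}}{2}$ and the reduced number $\alpha$ used in the proof of that theorem differ by an integer, hence have continued-fraction expansions with the same minimal period length, and then quote the congruence already established for $l(\alpha)$.

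First I would recall from the proof of Theorem~\ref{calpq} that $\alpha$ is the larger root of
\[
x^2 - Bx - \tfrac14(pq - B^2) = 0,
\]
where $B$ equals $\lfloor\sqrt{pq}\rfloor$ or $\lfloor\sqrt{pq}\rfloor - 1$, chosen so as to be odd; solving gives $\alpha = \frac{B+\sqrt{pq}}{2}$. A short check shows that $B$ is then the largest odd integer below $\sqrt{pq}$, so that $B < \sqrt{pq} < B+2$, whence $\alpha>1$ and $-1<\alpha'<0$, i.e. $\alpha$ is reduced; in particular $l(\alpha)$ is a genuine period length. Since $p\equiv q\equiv 3\pmod 4$ forces $pq\equiv 1\pmod4$, the number $\frac{1+\sqrt{pq}}{2}$ is a quadratic irrational of discriminant $pq$, and because $B$ is odd,
\[
\frac{1+\sqrt{pq}}{2} - \alpha = \frac{1-B}{2} \in \Z .
\]
Thus $\frac{1+\sqrt{pq}}{2}$ is the image of $\alpha$ under the transformation $x \mapsto x + \frac{1-B}{2}$, which lies in $\mathrm{SL}_2(\Z)$, so $\frac{1+\sqrt{pq}}{2} \sim \alpha$.

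Next I would invoke the fact recalled in \S1 that $\mathrm{GL}_2(\Z)$-equivalent quadratic numbers have cyclically equivalent continued-fraction periods — something one also sees directly here, since adding an integer to a number changes only its leading partial quotient and leaves the periodic tail untouched. Consequently $l\!\left(\frac{1+\sqrt{pq}}{2}\right) = l(\alpha)$, and the proof of Theorem~\ref{calpq} gives $l(\alpha) \equiv 1+\left(\frac{q}{p}\right)\pmod4$; combining these yields the corollary.

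I do not expect any genuine obstacle here. The only point needing (routine) attention is the identification $\alpha = \frac{B+\sqrt{pq}}{2}$ with $B$ the largest odd integer $<\sqrt{pq}$, together with the observation that $\frac{1-B}{2}\in\Z$; after that the statement is simply Theorem~\ref{calpq} reread in the language of continued fractions, exactly as in the corollaries following Theorems~\ref{9p} and~\ref{16p}.
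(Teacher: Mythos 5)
Your proposal is correct and follows essentially the same route as the paper, which states this corollary with no further argument precisely because the $\alpha$ in the proof of Theorem~\ref{calpq} is $\frac{B+\sqrt{pq}}{2}$ with $B$ odd, so that $\frac{1+\sqrt{pq}}{2}$ differs from $\alpha$ by an integer and $l\left(\frac{1+\sqrt{pq}}{2}\right)=l(\alpha)$. Your explicit identification of $\alpha$ and the integer-translation argument supply exactly the (implicit) justification the paper intends.
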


\begin{thm}\label{cal4pq}
Let $p$ and $q$ ($p<q$) be prime numbers such that $p \equiv q \equiv 3$ (mod 4). Then we have 
\[
\kappa(4pq) \equiv 1+\left(\frac{q}{p}\right)\ \mathrm{(mod\ 4)}.
\]
\end{thm}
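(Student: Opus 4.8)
The plan is to follow the proof of Theorem~\ref{calpq}, but to work inside the order $\Z[\sqrt{pq}]$ of discriminant $4pq$ in place of $\Z[\frac{1+\sqrt{pq}}{2}]$. The first move is the reduction that makes the count finite. Since $p\equiv 3\ \mathrm{(mod\ 4)}$, the equation $x^{2}-pqy^{2}=-1$ is unsolvable in $\Z$ (it would force $\left(\frac{-1}{p}\right)=1$), so $N(\varepsilon_{4pq})=1$; by the classical fact that $(-1)^{l(\alpha)}=N(\varepsilon_{4pq})$ for every $\alpha\in\mathcal{Q}(4pq)$, all periods are even. Hence every non-ambiguous class pairs with $[-\frac{1}{\alpha'}]$, of the same period length, and contributes $2l(\alpha)\equiv 0\ \mathrm{(mod\ 4)}$; by Proposition~\ref{equation} this leaves
\[
\kappa(4pq)\equiv\sum_{[\alpha]\ \mathrm{ambiguous}}l(\alpha)\ \mathrm{(mod\ 4)}.
\]

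Next I would analyse the distinguished ambiguous class $\alpha=\lfloor\sqrt{pq}\rfloor+\sqrt{pq}$, the largest solution of $x^{2}-2\lfloor\sqrt{pq}\rfloor x-(pq-\lfloor\sqrt{pq}\rfloor^{2})=0$, whose purely periodic expansion is palindromic of the shape $\overline{[a_{0},a_{1},\dots,a_{n},a_{n+1},a_{n},\dots,a_{1}]}$, so that $l(\alpha)=2n+2$ and everything reduces to the parity of $n$. Exactly as in Theorems~\ref{calpq} and~\ref{16p}, put $\beta=\lfloor\sqrt{q/p}\rfloor+\sqrt{q/p}$, the largest solution of $px^{2}-2p\lfloor\sqrt{q/p}\rfloor x-(q-p\lfloor\sqrt{q/p}\rfloor^{2})=0$ (again of discriminant $4pq$), and form the chain of reduction steps $p[1,\beta]=p\,\omega_{n}^{-1}\cdots\omega_{0}^{-1}[1,\alpha]$ along the cycle of $\alpha$. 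Here $[1,\alpha]=\Z[\sqrt{pq}]$ has norm $1$ and $p[1,\beta]=[p,\sqrt{pq}]$ has norm $p$, so $\theta:=p\,\omega_{n}^{-1}\cdots\omega_{0}^{-1}$ satisfies $|N(\theta)|=p$; because each $\omega_{i}$ is reduced we have $N(\omega_{i})<0$, which forces $N(\theta)=(-1)^{n+1}p$. Writing $\theta=x+y\sqrt{pq}$, the equation $x^{2}-pqy^{2}=\pm p$ gives $p\mid x$, hence $pX^{2}-qy^{2}=\pm 1$ with $x=pX$ and $qy^{2}\equiv\mp 1\ (\mathrm{mod}\ p)$; thus $\left(\frac{q}{p}\right)=-1$ when $N(\theta)=p$ and $\left(\frac{q}{p}\right)=+1$ when $N(\theta)=-p$. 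Combining with $N(\theta)=(-1)^{n+1}p$ shows that $n$ is even precisely when $\left(\frac{q}{p}\right)=1$, in which case $l(\alpha)=2n+2\equiv 2\equiv 1+\left(\frac{q}{p}\right)$, while otherwise $l(\alpha)\equiv 0\equiv 1+\left(\frac{q}{p}\right)\ \mathrm{(mod\ 4)}$.

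The remaining step — which I expect to be the crux — is to show that the ambiguous classes of discriminant $4pq$ other than $[\alpha]$ together contribute $0\ \mathrm{(mod\ 4)}$; here the non-maximal order $\Z[\sqrt{pq}]$ carries more ambiguous classes than the maximal order $\Z[\frac{1+\sqrt{pq}}{2}]$ did in Theorem~\ref{calpq}. By Lemma~\ref{equivalent} these classes are represented by reduced forms with $a=-c$ or $a\mid b$, arising from the ideals $[p,\sqrt{pq}]$, $[q,\sqrt{pq}]$ and the prime(s) above $2$; one enumerates them — their number being governed by $pq\bmod 8$ and by the unit index $\mu$ with $\varepsilon_{4pq}=\varepsilon_{pq}^{\mu}$, via Proposition~\ref{classnumber} — and reruns the chain-and-norm computation for each, the relevant norm equations now reading $x^{2}-pqy^{2}=\pm q$ and $x^{2}-pqy^{2}=\pm 2$ (or $\pm 4$), whose solvability pins the sign to the one giving $l\equiv 0\ \mathrm{(mod\ 4)}$. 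A cleaner alternative, which would bypass this case analysis, is to prove directly that $\kappa(4pq)\equiv\kappa(pq)\ \mathrm{(mod\ 4)}$ by transporting continued fractions between $\Z[\sqrt{pq}]$ and $\Z[\frac{1+\sqrt{pq}}{2}]$ in the style of the proof of Theorem~\ref{pand4p} (comparing $h(4pq)$ with $h(pq)$ through Proposition~\ref{classnumber}), and then to quote Theorem~\ref{calpq}.
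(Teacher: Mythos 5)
Your central computation is exactly the paper's: the paper also takes $\alpha$ and $\beta$ to be the roots of $x^{2}-2\lfloor\sqrt{pq}\rfloor x+\lfloor\sqrt{pq}\rfloor^{2}-pq=0$ and $px^{2}-2p\lfloor\sqrt{q/p}\rfloor x-p\lfloor\sqrt{q/p}\rfloor^{2}-q=0$, runs the chain $p[1,\beta]=p\,\omega_{n}^{-1}\cdots\omega_{0}^{-1}[1,\alpha]$ in $\Z[\sqrt{pq}]$, deduces $N(x+y\sqrt{pq})=\pm p$, and reads off the sign from the solvability of $x^{2}-pqy^{2}=\pm p$ via $\left(\frac{-q}{p}\right)$; your sign bookkeeping $N(\theta)=(-1)^{n+1}p$ and the conclusion $l(\alpha)\equiv 1+\left(\frac{q}{p}\right)\ \mathrm{(mod\ 4)}$ agree with it.

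The one place you diverge is the step you flag as ``the crux,'' and there you are worrying about a problem that does not exist. The non-maximal order $\Z[\sqrt{pq}]$ does \emph{not} carry extra ambiguous classes here. By Lemma \ref{equivalent} an ambiguous class is detected by a reduced quadratic with $a=-c$ or $a\mid b$. The type $a=-c$ forces $(b/2)^{2}+a^{2}=pq$, impossible since $p\equiv 3\ \mathrm{(mod\ 4)}$ divides $pq$ exactly once. For the type $a\mid b$ the candidates with even $a$ (i.e.\ $a=2,\,2p,\,4,\dots$, the ones coming from the conductor) are all \emph{imprimitive}: e.g.\ for $a=2$ one gets $b=2b_{0}$ with $b_{0}$ odd and $c=(b_{0}^{2}-pq)/2$, which is even because $b_{0}^{2}\equiv 1$ and $pq\equiv 1\ \mathrm{(mod\ 4)}$, so $\gcd(a,b,c)=2$ and the form is excluded from $\mathcal{Q}(4pq)$. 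That leaves only $a\in\{1,p\}$ (and possibly $q$, which lands in the same class since $\mathfrak{p}\mathfrak{q}=(\sqrt{pq})$ is principal), i.e.\ exactly one ambiguous class, namely $[\alpha]=[\beta]$. This is what justifies the reduction ``it suffices to prove $l(\alpha)\equiv 1+\left(\frac{q}{p}\right)$'' with no further case analysis; note also that your fallback plan of rerunning the chain for a hypothetical second ambiguous class joined by a norm-$(\pm p)$ element would yield the \emph{same} residue $1+\left(\frac{q}{p}\right)$ again, doubling the total to $0\ \mathrm{(mod\ 4)}$ and contradicting the theorem when $\left(\frac{q}{p}\right)=1$ --- so it is essential, not optional, to verify that such classes are absent. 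Your alternative route via $\kappa(4pq)\equiv\kappa(pq)$ and Proposition \ref{classnumber} is not the paper's argument and would require its own comparison of unit indices, but the direct argument closes once the imprimitivity observation is in place.
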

\begin{proof}
Let $\alpha$ and $\beta$ be the largest solutions of equations
\[
x^2-2\lfloor\sqrt{pq}\rfloor x+\lfloor\sqrt{pq}\rfloor^2-pq=0
\]
and
\[
px^2-2p\left\lfloor\sqrt{\frac{q}{p}}\right\rfloor x-p\left\lfloor\sqrt{\frac{q}{p}}\right\rfloor^2-q=0
\]
respectively. It suffices to prove that 
\[
l(\alpha) \equiv 1+\left(\frac{q}{p}\right)\ \mathrm{(mod\ 4)}.
\]\par
Let
\[
\alpha=\overline{[a_0,a_1,...,a_{n},a_{n+1},a_{n},...,a_1]}
\]
and
\[
\beta = \overline{[a_{n+1},a_{n},...,a_1,a_0,a_1,...,a_{n-1},a_n]}
\]
be continued fraction expansions of $\alpha$ and $\beta$ respectively. Then we get
\begin{align*}
p[1,\beta]
&=p[1,\omega_{n-1}^{-1}]\\
&=p\omega_{n-1}[1,\omega_{n-1}]\\
&=...\\
&=p\omega_{n-1}...\omega_0[1,\alpha].
\end{align*}
Since the norms of $[1,\alpha]$ and $[1,\beta]$ in $\Z[\sqrt{pq}]$ are $1$ and $\frac{1}{p}$ respectively, $N(p\omega_0^{-1}...,\omega_{n}^{-1})=\pm p$. When $N(x+y\sqrt{pq})=p$ holds, since $x^2-pqy^2=p$, $\left(\frac{-q}{p}\right)=1$ and  $\left(\frac{q}{p}\right)=-1$ holds. Similarly, we have $\left(\frac{q}{p}\right)=1$ when $N(x+y\sqrt{pq})=-p$. Therefore, 
\[
l(\alpha) \equiv 1+\left(\frac{q}{p}\right)\ \mathrm{(mod\ 4)}
\]
holds.
\end{proof}

\begin{cor}
Let $p$ and $q$ ($p<q$) be prime numbers such that $p \equiv q \equiv 3$ (mod 4). Then we have
\[
l(\sqrt{pq}) \equiv 1+\left(\frac{q}{p}\right)\ \mathrm{(mod\ 4)}.
\]
\end{cor}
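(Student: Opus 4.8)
The plan is to mirror the proof of Theorem \ref{calpq}, replacing the maximal-type order $\Z[\frac{1+\sqrt{pq}}{2}]$ by the order $\Z[\sqrt{pq}]$ of discriminant $4pq$. Since $p\equiv q\equiv 3\pmod 4$ forces $N(\varepsilon_{pq})=1$, and hence $N(\varepsilon_{4pq})=1$, Proposition \ref{equation} and Lemma \ref{equivalent} apply. First I would reduce the caliber to a single period length: by Proposition \ref{equation}, $\kappa(4pq)=\sum_{[\omega]\in\mathcal R(4pq)}l(\omega)$, and grouping the classes under the involution $\omega\mapsto -1/\omega'$ (which preserves $l$) and using $N(\varepsilon_{4pq})=1$ exactly as in Theorem \ref{calpq}, every orbit except that of the principal ambiguous class cancels modulo $4$. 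Taking $\alpha=\sqrt{pq}+\lfloor\sqrt{pq}\rfloor$, the largest root of $x^2-2\lfloor\sqrt{pq}\rfloor x+\lfloor\sqrt{pq}\rfloor^2-pq=0$, as representative of that class, it then suffices to prove $l(\alpha)\equiv 1+\left(\frac qp\right)\pmod 4$.

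Next, by Lemma \ref{equivalent} with $N(\varepsilon_{4pq})=1$, the period of $\alpha$ has the symmetric shape $\overline{[a_0,a_1,\dots,a_n,a_{n+1},a_n,\dots,a_1]}$, so $l(\alpha)=2n+2$ and hence $l(\alpha)\equiv 2\pmod 4$ iff $n$ is even. The whole problem thus becomes: determine the parity of $n$ in terms of $\left(\frac qp\right)$. To this end I introduce $\beta$, the largest root of the second equation above, equivalently the complete quotient $\alpha_{n+1}$ of $\alpha$ at the centre of the palindrome, with expansion $\overline{[a_{n+1},a_n,\dots,a_1,a_0,a_1,\dots,a_{n-1},a_n]}$ and explicit value $\beta=\sqrt{q/p}+\lfloor\sqrt{q/p}\rfloor$. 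In $\Z[\sqrt{pq}]$ the module $[1,\alpha]$ is the whole order (norm $1$), while $[1,\beta]=\tfrac1p(p,\sqrt{pq})$ has norm $\tfrac1p$, the factor $p$ being the leading coefficient of $\beta$.

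The heart of the argument is the continued-fraction telescoping $[1,\beta]=\xi^{-1}[1,\alpha]$ with $\xi=\prod_{i=0}^{n}(\alpha_i-a_i)$. Each factor equals $\alpha_{i+1}^{-1}$ for a reduced quotient $\alpha_{i+1}$, so $N(\alpha_i-a_i)<0$; as there are $n+1$ factors and the ideal-norm ratio forces $|N(\xi)|=p$, I obtain $N(\xi)=(-1)^{n+1}p$, whence $\xi=x+y\sqrt{pq}\in\Z[\sqrt{pq}]$ with $x^2-pqy^2=(-1)^{n+1}p$. From $p\mid x$, dividing by $p$ and reducing modulo $p$, and using that $p\equiv 3\pmod 4$ gives $\left(\frac{-1}p\right)=-1$, the sign $+p$ yields $\left(\frac qp\right)=-1$ and the sign $-p$ yields $\left(\frac qp\right)=+1$. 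Hence $n$ even $\Leftrightarrow N(\xi)=-p\Leftrightarrow\left(\frac qp\right)=1\Leftrightarrow l(\alpha)\equiv 2$, and $n$ odd $\Leftrightarrow N(\xi)=+p\Leftrightarrow\left(\frac qp\right)=-1\Leftrightarrow l(\alpha)\equiv 0$, i.e.\ $l(\alpha)\equiv 1+\left(\frac qp\right)\pmod 4$. (Quadratic reciprocity, which gives $\left(\frac pq\right)\left(\frac qp\right)=-1$ here, confirms that exactly one sign is realisable, so the prime $(p,\sqrt{pq})$ is indeed principal and the telescoping closes up.)

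The main obstacle, I expect, is the careful bookkeeping in the non-maximal order $\Z[\sqrt{pq}]$: verifying that the central complete quotient $\beta$ has leading coefficient exactly $p$ (so that $|N(\xi)|=p$ and not $q$, $1$, or $pq$ — this is where $p<q$ and the classical theory of the expansion of $\sqrt{pq}$ enter) and that the number of negative-norm factors in $\xi$ is precisely $n+1$, pinning the sign $(-1)^{n+1}$. The mod-$4$ reduction to the single class $\alpha$ likewise needs the involution and genus argument of Theorem \ref{calpq} transcribed to discriminant $4pq$. Once these structural points are in place, the Legendre-symbol computation that matches the sign of $N(\xi)$ to $\left(\frac qp\right)$ is short.
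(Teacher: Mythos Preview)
Your approach is correct and is essentially the paper's own: the corollary is an immediate by-product of the proof of Theorem~\ref{cal4pq}, where one shows $l(\alpha)\equiv 1+\left(\frac{q}{p}\right)\pmod 4$ for $\alpha=\sqrt{pq}+\lfloor\sqrt{pq}\rfloor$ via the telescoping $p[1,\beta]=p\,\omega_n^{-1}\cdots\omega_0^{-1}[1,\alpha]$, the norm computation $N(\xi)=\pm p$, and the Legendre-symbol dichotomy from $x^2-pqy^2=\pm p$. Your only superfluous step is the caliber reduction in the first paragraph: for the corollary you need nothing about $\kappa(4pq)$, only $l(\sqrt{pq})=l(\alpha)$, which is immediate since $\alpha$ is the purely periodic tail of $\sqrt{pq}$.
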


\begin{thm}\label{4pq}
Let $p$ and $q$ be prime numbers such that $p \equiv 1$ (mod 4) and $q \equiv 3$ (mod 4). Then we have 
\[
\kappa(4pq) \equiv 0\ \mathrm{(mod\ 4)}.
\]
\end{thm}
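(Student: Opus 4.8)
The plan is to argue exactly as in Theorems \ref{calpq} and \ref{cal4pq}. Since $p\equiv 1$ and $q\equiv 3\pmod 4$ we have $pq\equiv 3\pmod 4$, so $D=4pq$ is a fundamental discriminant with ring of integers $\Z[\sqrt{pq}]$; moreover $x^2-pq\,y^2\equiv -1\pmod q$ is insoluble, so $N(\varepsilon_{4pq})=1$. Since each step of the reduction algorithm is effected by a matrix of determinant $-1$, while one full traversal of any reduction cycle yields an automorph corresponding to $\varepsilon_{4pq}$, of determinant $N(\varepsilon_{4pq})=1$, every reduction cycle of discriminant $4pq$ has even length.

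By Proposition \ref{equation}, $\kappa(4pq)=\sum_{[\gamma]\in\mathcal R(4pq)}l(\gamma)$. The involution $[\gamma]\mapsto[-1/\gamma']$ on $\mathcal R(4pq)$ fixes exactly the ambiguous classes and sends every other class to a distinct class of equal period; such a pair contributes $l(\gamma)+l(-1/\gamma')=2l(\gamma)\equiv 0\pmod 4$, because $l(\gamma)$ is even. Hence
\[
\kappa(4pq)\equiv\sum_{[\alpha]:\,\alpha\sim -1/\alpha'}l(\alpha)\pmod 4,
\]
the sum over ambiguous classes. By Lemma \ref{equivalent} (with $N(\varepsilon_{4pq})=1$) each ambiguous $\alpha$ has continued fraction $\overline{[c_1,\dots,c_l,c_l,\dots,c_1]}$, so $l(\alpha)=2l$, or $\overline{[c_0,c_1,\dots,c_l,c_{l+1},c_l,\dots,c_1]}$, so $l(\alpha)=2l+2$; in either case $l(\alpha)\bmod 4$ is controlled by the parity of $l$.

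To determine these parities, for each ambiguous class I would take as representative the reduction of a number $\tfrac{B+\sqrt{pq}}{2a}$, where $a$ is a divisor of $2pq$ recording the ramified ideal $\mathfrak a$ of that class --- one of $\mathfrak p_2,\mathfrak p_p,\mathfrak p_q,\mathfrak p_2\mathfrak p_p$ (note $\mathfrak p_p\mathfrak p_q=(\sqrt{pq})$, so $[\mathfrak p_q]=[\mathfrak p_p]$ in $\mathcal R(4pq)$) --- together with its partner palindrome centre $\beta$ in the same cycle, and then run the module--chain computation of Theorems \ref{calpq} and \ref{cal4pq}: the identity $a[1,\beta]=a\,\omega_k\cdots\omega_0[1,\alpha]$ exhibits an element $\xi$ with $N(\xi)=\pm N(\mathfrak a)$, the sign being $(-1)^{k+1}$ because each $N(\omega_i)<0$; comparing this with the sign forced by solubility of the corresponding equation $x^2-pq\,y^2=\pm N(\mathfrak a)$ (soluble since $\mathfrak a$ is principal in that cycle) fixes the parity of $k$, hence $l(\alpha)\bmod 4$. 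The residues that occur are governed by $\left(\frac qp\right)$, which decides whether $\mathfrak p_p$ and $\mathfrak p_q$ are principal, and by $\left(\frac 2p\right),\left(\frac 2q\right)$, which decide the behaviour of $\mathfrak p_2$ and $\mathfrak p_2\mathfrak p_p$.

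The final step is to sum over the ambiguous classes and verify the total is $\equiv 0\pmod 4$. The main obstacle lies here: unlike Theorems \ref{4pand8p}--\ref{cal4pq}, where a single ambiguous class carries the whole congruence, for $D=4pq$ up to four ambiguous classes genuinely occur, the set of distinct ones varying with $\left(\frac qp\right),\left(\frac 2p\right),\left(\frac 2q\right)$ by genus theory; one must check in each case that the cycle--length parities produced by the several norm equations cancel modulo $4$, i.e. that the number of ambiguous cycles of length $\equiv 2\pmod 4$ is even. A convenient organising device is to pair the ambiguous classes by multiplication with $[\mathfrak p_2]$ when $\mathfrak p_2$ is non-principal --- reducing the claim to $l(\alpha)+l(\alpha\mathfrak p_2)\equiv 0\pmod 4$ on each pair --- and, when $\mathfrak p_2$ is principal, to verify directly that the period of $\sqrt{pq}$ is divisible by $4$.
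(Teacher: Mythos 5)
Your overall strategy coincides with the paper's: reduce to the ambiguous classes (the non-ambiguous ones pair off with equal, even periods), represent each ambiguous cycle by its two palindrome centres via Lemma \ref{equivalent}, and read the parity of the half-period off the sign of the norm of the element produced by the module chain joining the two centres, exactly as in Theorems \ref{calpq} and \ref{cal4pq}. Up to that point the proposal is sound.

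The gap is in the final cancellation, which you yourself flag as ``the main obstacle'' and then resolve incorrectly. Your fallback claim --- that when $\mathfrak{p}_2$ is principal one can ``verify directly that the period of $\sqrt{pq}$ is divisible by $4$'' --- is false: for $p=17$, $q=3$ we have $N(7+\sqrt{51})=-2$, so $\mathfrak{p}_2$ is principal, yet $\sqrt{51}=[7;\overline{7,14}]$ has period $2$. What the paper actually exploits is not that individual ambiguous periods are $\equiv 0\pmod 4$, but that they are congruent to \emph{each other} modulo $4$ and hence cancel in pairs. Concretely, in the case the paper treats ($\alpha\sim\beta$ and $\gamma\sim\delta$), the two palindrome centres of each of the two ambiguous cycles are joined by a chain whose associated element has norm $\pm 2$; since $N(\varepsilon_{4pq})=1$, only one of $x^2-pqy^2=\pm 2$ is soluble, so the same sign occurs in both cycles, the half-periods $n+1$ and $m+1$ have the same parity, and $l(\alpha)+l(\gamma)=2(n+1)+2(m+1)\equiv 0\pmod 4$ even though each summand may be $\equiv 2\pmod 4$ (as happens for $pq=51$). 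To repair your argument you should pair the ambiguous cycles with one another rather than a class with its translate by $[\mathfrak{p}_2]$, and compare the signs of the norm equations $x^2-pqy^2=\pm N(\mathfrak{a})$ arising from the two centres of each cycle, which are forced to agree across cycles; as written, your proposal does not establish the required cancellation.
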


\begin{proof}
Let $\alpha$, $\beta$, $\gamma$ and $\delta$ be the largest solutions of equations
\[
x^2-2\lfloor\sqrt{pq}\rfloor x-(pq-\lfloor\sqrt{pq}\rfloor^2)=0,
\]
\[
2x^2-2Bx-\frac{1}{2}(pq-B^2)=0 \left(B=
\begin{cases}
\lfloor\sqrt{pq}\rfloor\ &(\lfloor\sqrt{pq}\rfloor \mathrm{:odd})\\
\lfloor\sqrt{pq}\rfloor-1\ &(\lfloor\sqrt{pq}\rfloor \mathrm{:even})
\end{cases}
\right),
\]
\[
\begin{cases}
px^2-2p\left\lfloor\sqrt{\frac{q}{p}}\right\rfloor x-\left(q-p{\left\lfloor\sqrt{\frac{q}{p}}\right\rfloor}^2\right)=0\\
2px^2-2pB'x-\frac{1}{2}(q-p{B'}^2)=0\left(B'=
\begin{cases}
\left\lfloor\sqrt{\frac{q}{p}}\right\rfloor\ &\left(\left\lfloor\sqrt{\frac{q}{p}}\right\rfloor \mathrm{:odd}\right)\\
\left\lfloor\sqrt{\frac{q}{p}}\right\rfloor-1\ &\left(\left\lfloor\sqrt{\frac{q}{p}}\right\rfloor \mathrm{:even}\right)
\end{cases}
\right)
\end{cases}
(p<q),
\]
and
\[
\begin{cases}
qx^2-2q\left\lfloor\sqrt{\frac{p}{q}}\right\rfloor x-\left(p-q{\left\lfloor\sqrt{\frac{p}{q}}\right\rfloor}^2\right)=0\\
2qx^2-2qB'x-\frac{1}{2}(p-q{B'}^2)=0\left(B'=
\begin{cases}
\left\lfloor\sqrt{\frac{p}{q}}\right\rfloor\ &\left(\left\lfloor\sqrt{\frac{p}{q}}\right\rfloor \mathrm{:odd}\right)\\
\left\lfloor\sqrt{\frac{p}{q}}\right\rfloor-1\ &\left(\left\lfloor\sqrt{\frac{p}{q}}\right\rfloor \mathrm{:even}\right)
\end{cases}
\right)
\end{cases}
(p>q)
\]
respectively.  From now on, we asume that $p<q$, $\alpha \sim \beta$ and $\gamma \sim \delta$. (The proof can be similarly extended to other cases.) 
It suffices to prove that
\[
l(\alpha) + l(\gamma) \equiv 0\ \mathrm{(mod\ 4)}.
\]\par
Let
\[
\alpha=\overline{[a_0,a_1,...,a_{n},a_{n+1},a_{n},...,a_1]},
\]
\[
\beta = \overline{[a_{n+1},a_{n},...,a_1,a_0,a_1,...,a_{n}]},
\]
\[
\gamma=\overline{[b_0,b_1,...,b_{m},b_{m+1},b_{m},...,b_1]}
\]
and
\[
\delta = \overline{[b_{m+1},b_{m},...,b_1,b_0,b_1,...,b_{m}]}
\]
be continued fraction expansions of $\alpha$, $\beta$, $\gamma$ and $\delta$ respectively. Then we get
\begin{align*}
2[1,\beta]
&=2[1,{\omega}_{n}^{-1}]\\
&=2{\omega}_{n}^{-1}[1,\omega_{n}]\\
&=...\\
&=2{\omega}_{n}^{-1}...\omega_0^{-1}[1,\alpha]
\end{align*}
and 
\begin{align*}
2p[1,\delta]
&=2p[1,{\omega'}_{m}^{-1}]\\
&=2p{\omega'}_{m}^{-1}[1,\omega'_{m}]\\
&=...\\
&=2p{\omega'}_{m}^{-1}...{\omega'}_0^{-1}[1,\gamma].
\end{align*}
Since the norms of $[1,\alpha]$, $[1,\beta]$, $[1,\gamma]$ and $[1,\delta]$ in $\Z[\sqrt{pq}]$ are $1$, $\frac{1}{2}$, $\frac{1}{p}$ and $\frac{1}{2p}$ respectively, $N(2\omega_0^{-1}...,\omega_{n}^{-1})=\pm2$ and $N(2{\omega'}_0^{-1}...,{\omega'}_{m}^{-1})=\pm2$. When $N(x+y\sqrt{pq})=2$ holds, since $x^2-pqy^2=2$, $\left(\frac{2}{p}\right)=\left(\frac{2}{q}\right)=1$ and $p \equiv 1$ (mod 8)  and $q \equiv 7$ (mod 8) holds. Similarly, we have $p \equiv 1$ (mod 8) and $q \equiv 3$ (mod 8) when $N(x+y\sqrt{pq})=-2$. Therefore, 
\[
l(\alpha) + l(\gamma) \equiv 0\ \mathrm{(mod\ 4)}
\]
holds.

\end{proof}

\section*{ \S 4. Conjecture}
\setcounter{section}{4}
\setcounter{subsection}{0}

For prime numbers $p$ and $q$ congruent to 1 modulo 4, let the positive integers $x_p$, $y_p$, $x_q$ and $y_q$ be determined (uniquely) by
\[
p=x_p^2+y_p^2\ (0<x_p<y_p),\  q=x_q^2+y_q^2\ (0<x_q<y_q).
\]
In \cite{mori}, it is conjectured that the following proposition holds true.
\begin{conj}
Let $p$ and $q$ $(p<q)$ be prime numbers such that $p \equiv q \equiv 1$ (mod 4). We assume $x_p \not\equiv x_q$ (mod 2). Then we have
\[
\kappa^+(pq) \equiv 1-(-1)^{x_p}\left(\frac{q}{p}\right)\ \mathrm{(mod\ 4)}.
\]
\end{conj}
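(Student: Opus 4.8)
The plan is to imitate the proofs of Theorem~\ref{8p} and Theorem~\ref{pq}: the target congruence should split as the product of the ``$1-(-1)^{x_p}$'' phenomenon carried by the representation $p=x_p^2+y_p^2$ (as for the discriminant $8p$ in Theorem~\ref{8p}) and the ``$\left(\frac qp\right)$'' phenomenon carried by the splitting of $q$ modulo $p$ (as for $pq$ with $p\equiv q\equiv3\pmod4$ in Theorems~\ref{pq} and \ref{calpq}). First I would use Proposition~\ref{equation} to write $\kappa^+(pq)=\sum_{[\alpha]\in\mathcal{R}(pq)}S(\alpha)$ and group $\mathcal{R}(pq)$ into orbits of the involution $[\alpha]\mapsto\bigl[-\tfrac{1}{\alpha'}\bigr]$. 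As $S$ is invariant under reversing a period, a genuine two-element orbit contributes $2S(\alpha)$, and combining this with the parity assertions of Lemma~\ref{parity} (note that $pq$ is odd, so $l(\alpha)\equiv S(\alpha)\pmod2$) and a short argument that the non-ambiguous classes can be organised into orbits each contributing $0$ modulo $4$ --- exactly as the pairs $\gamma_i,-\tfrac{1}{\gamma_i'}$ are handled in Theorem~\ref{8p} --- one reduces the computation modulo $4$ to the sum of $S(\alpha)$ over the finitely many ambiguous classes, whose continued fractions are palindromic by Lemma~\ref{equivalent}.

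I would then split on $N(\varepsilon_{pq})$. When $N(\varepsilon_{pq})=-1$ the ambiguous classes have the first palindromic shape of Lemma~\ref{equivalent}, and when $N(\varepsilon_{pq})=1$ both shapes occur; in either case I would pin down an explicit set of representatives as the largest roots of quadratics of the shapes $x^2-Bx-\tfrac{1}{4}(pq-B^2)=0$ and $px^2-pB'x-\tfrac{1}{4}(q-p{B'}^2)=0$ together with the two ``$x_p,y_p$-quadratics'' that play, for the discriminant $pq$, the role played in Theorem~\ref{8p} by $(x_p+y_p)x^2+2(x_p-y_p)x-(x_p+y_p)=0$ and $(y_p-x_p)x^2-2(x_p+y_p)x+(x_p-y_p)=0$. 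For each representative $\alpha=\overline{[a_0,\dots,a_n,a_{n+1},a_n,\dots,a_1]}$ or $\overline{[a_1,\dots,a_n,a_n,\dots,a_1]}$ I would form $\bigl(\begin{smallmatrix}p&q\\ r&s\end{smallmatrix}\bigr)=\bigl(\begin{smallmatrix}a_1&1\\ 1&0\end{smallmatrix}\bigr)\cdots\bigl(\begin{smallmatrix}a_n&1\\ 1&0\end{smallmatrix}\bigr)$, express the associated automorphism matrix $\bigl(\begin{smallmatrix}P&Q\\ R&S\end{smallmatrix}\bigr)$ through $t_{pq}$ and $u_{pq}$ by the classical formulae, use $p=x_p^2+y_p^2$ to read off the residues of $p,q,r,s$ modulo $4$ --- this is where identities of the type $(P+S)x_p=(R-Q)y_p$ from Theorem~\ref{8p} bring in $(-1)^{x_p}$ --- and then apply Lemma~\ref{number} to turn those residues into $S(\alpha)\bmod4$. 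The factor $\left(\frac qp\right)$ enters through the solvability of norm relations of the form $x^2-pqy^2=\pm p$, by the Legendre-symbol computation of Theorems~\ref{calpq} and \ref{pq} adapted to $p\equiv1\pmod4$.

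The hard part will be the structural input rather than the (long but mechanical) final bookkeeping. One must determine exactly how many ambiguous classes occur and which candidate quadratics realise them; for $p\equiv q\equiv1\pmod4$ the $2$-part of the narrow class group of $\mathbb{Q}(\sqrt{pq})$ is governed by R\'edei--Reichardt symbols and is genuinely more subtle than in the $p\equiv q\equiv3$ situation of Theorem~\ref{pq}. I expect the hypothesis $x_p\not\equiv x_q\pmod2$ to be precisely what removes any dependence on the biquadratic residue symbols $\left(\frac pq\right)_4$ and $\left(\frac qp\right)_4$ and fixes a definite value of $N(\varepsilon_{pq})$; translating $x_p\not\equiv x_q\pmod2$ into the corresponding branch of R\'edei's theorem, and thereby pinning down the list of ambiguous classes, is the real crux. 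After that, as in Theorem~\ref{8p} one needs control of $t_{pq},u_{pq}$ modulo $8$ in order to decide, among $\bigl(\begin{smallmatrix}1&0\\ 0&1\end{smallmatrix}\bigr),\bigl(\begin{smallmatrix}0&1\\ 1&0\end{smallmatrix}\bigr),\dots$, which class modulo $4$ each $\bigl(\begin{smallmatrix}p&q\\ r&s\end{smallmatrix}\bigr)$ lies in; I would organise this step exactly as the ``In summary, we have $\bigl(\begin{smallmatrix}p&q\\ r&s\end{smallmatrix}\bigr)\equiv\dots$'' tables in the proofs of Theorems~\ref{8p}, \ref{pq} and \ref{mcal4pq}, and the conjectured congruence should drop out once one checks that every ambiguous class other than the distinguished one or two contributes $0$ modulo $4$.
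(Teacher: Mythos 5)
This statement is stated in the paper as a \emph{conjecture} (attributed to \cite{mori}), and the paper offers no proof of it: it only remarks that, when $N(\varepsilon_{pq})=-1$, the congruence would follow if the conditions $l\bigl(\frac{1+\sqrt{pq}}{2}\bigr)\equiv l\bigl(\frac{1+\sqrt{q/p}}{2}\bigr)$ (mod 4) and $x_py_q-y_px_q<0$ were known to be equivalent, and that when $N(\varepsilon_{pq})=1$ it would follow if $\kappa(pq)\equiv 0$ (mod 4) and $x_py_q-y_px_q<0$ were equivalent. Your proposal is therefore not being measured against an existing proof; it has to stand on its own, and as written it does not. It is a strategy outline in the style of Theorems \ref{8p} and \ref{pq}, but the two steps you yourself flag as ``the real crux'' --- determining the ambiguous classes and extracting the sign $(-1)^{x_p}\left(\frac{q}{p}\right)$ from the resulting matrix congruences --- are exactly the steps that are open, and they are not carried out. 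Saying that the bookkeeping is ``long but mechanical'' presupposes that the structural input has been secured, which it has not.

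Two concrete problems. First, you assert that the hypothesis $x_p\not\equiv x_q$ (mod 2) should ``fix a definite value of $N(\varepsilon_{pq})$''; this is false, and the paper's own discussion of the conjecture treats both cases $N(\varepsilon_{pq})=\pm1$ under that same hypothesis. R\'edei--Reichardt theory ties $N(\varepsilon_{pq})$ to $\left(\frac{p}{q}\right)$ and to quartic residue symbols, not to the parity condition on $x_p,x_q$, so this branch of your argument collapses. Second, your sketch never uses the representation $q=x_q^2+y_q^2$ at all: you only feed $p=x_p^2+y_p^2$ into the analogue of the identity $(p+s)x_p=(r-q)y_p$ from Theorem \ref{8p}. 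But the hypothesis involves $x_q$, and the quantity the paper identifies as decisive is the cross term $x_py_q-y_px_q$; an argument in which $x_q,y_q$ never appear cannot produce a conclusion whose validity depends on $x_p\not\equiv x_q$ (mod 2). Until you can (a) exhibit the correct list of ambiguous forms for discriminant $pq$ with $p\equiv q\equiv 1$ (mod 4) and (b) prove one of the equivalences the paper records as the missing ingredient, this remains a conjecture, not a theorem.
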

In the case of $N(\varepsilon_{pq})=-1$, from \cite{mori} this conjecture holds true if the two conditinos $l\left(\frac{1+\sqrt{pq}}{2}\right) \equiv l\left(\frac{1+\sqrt{\frac{q}{p}}}{2}\right)$ (mod 4) and $x_py_q-y_px_q<0$ are equivalent. In the case of $N(\varepsilon_{pq})=1$, following the proof of $\kappa^{+}(8p)$, if the conditions $\kappa(pq) \equiv 0$ (mod 4) and $x_py_q-y_px_q<0$ are equivalent, this conjecture holds true.

\end{document}